\theoremstyle{plain}
\newtheorem{theorem}{Theorem}[section]
\newtheorem{proposition}[theorem]{Proposition}
\newtheorem{lemma}[theorem]{Lemma}
\newtheorem{corollary}[theorem]{Corollary}
\newtheorem{definition}[theorem]{Definition}
\newtheorem{assumption}{Assumption}
\newtheorem{remark}[theorem]{Remark}
\newtheorem{example}{Example}
\newtheorem*{proof}{Proof}
\newcommand{\eps}{\varepsilon}
\DeclareMathOperator*{\argmin}{arg\,min}
\DeclareMathOperator{\supp}{supp}
\DeclareMathOperator{\Span}{span}
\DeclareMathOperator{\dist}{dist}
\newcommand{\field}[1]{\mathbb{#1}}
\newcommand{\R}{\field{R}}
\newcommand{\N}{\field{N}}
\newcommand{\Z}{\field{Z}}
\newcommand{\inner}[3][n]{\SwitchBracketsizeLeft{#1}\LeftBracketSize\langle#2,#3\SwitchBracketsizeRight{#1}\RightBracketSize\rangle}
\newcommand{\abs}[2][n]{\SwitchBracketsizeLeft{#1}\LeftBracketSize\lvert#2\SwitchBracketsizeRight{#1}\RightBracketSize\rvert}
\newcommand{\norm}[2][n]{\SwitchBracketsizeLeft{#1}\LeftBracketSize\lVert#2\SwitchBracketsizeRight{#1}\RightBracketSize\rVert}
\newcommand{\set}[3][a]{\SwitchBracketsizeLeft{#1}\LeftBracketSize\{#2 : #3\SwitchBracketsizeRight{#1}\RightBracketSize\}}
\newcommand{\NextScriptStyle}[1]{{\scriptstyle{#1}}}
\newcommand{\NextScriptScriptStyle}[1]{{\scriptscriptstyle{#1}}}
\newcommand{\NextTextStyle}[1]{{\textstyle{#1}}}
\newcommand{\NextDisplayStyle}[1]{{\displaystyle{#1}}}
\newcommand{\SwitchBracketsizeLeft}[1]{
  \ifthenelse{\equal{#1}{b}\OR\equal{#1}{big}}{\let\LeftBracketSize=\bigl}{
    \ifthenelse{\equal{#1}{B}\OR\equal{#1}{Big}}{\let\LeftBracketSize=\Bigl}{
      \ifthenelse{\equal{#1}{g}\OR\equal{#1}{bigg}}{\let\LeftBracketSize=\biggl}{
    \ifthenelse{\equal{#1}{G}\OR\equal{#1}{Bigg}}{\let\LeftBracketSize=\Biggl}{
      \ifthenelse{\equal{#1}{s}\OR\equal{#1}{small}}{\let\LeftBracketSize=\NextScriptStyle}{
        \ifthenelse{\equal{#1}{ss}}{\let\LeftBracketSize=\NextScriptScriptStyle}{
          \ifthenelse{\equal{#1}{t}\OR\equal{#1}{text}}{\let\LeftBracketSize=\NextTextStyle}{
        \ifthenelse{\equal{#1}{d}\OR\equal{#1}{display}}{\let\LeftBracketSize=\NextDisplayStyle}{
          \ifthenelse{\equal{#1}{a}\OR\equal{#1}{auto}}{\let\LeftBracketSize=\left}{
            \let\LeftBracketSize=\relax}}}}}}}}}}
\newcommand{\SwitchBracketsizeRight}[1]{
  \ifthenelse{\equal{#1}{b}\OR\equal{#1}{big}}{\let\RightBracketSize=\bigr}{
    \ifthenelse{\equal{#1}{B}\OR\equal{#1}{Big}}{\let\RightBracketSize=\Bigr}{
      \ifthenelse{\equal{#1}{g}\OR\equal{#1}{bigg}}{\let\RightBracketSize=\biggr}{
    \ifthenelse{\equal{#1}{G}\OR\equal{#1}{Bigg}}{\let\RightBracketSize=\Biggr}{
      \ifthenelse{\equal{#1}{s}\OR\equal{#1}{small}}{\let\RightBracketSize=\NextScriptStyle}{
        \ifthenelse{\equal{#1}{ss}}{\let\RightBracketSize=\NextScriptScriptStyle}{
          \ifthenelse{\equal{#1}{t}\OR\equal{#1}{text}}{\let\RightBracketSize=\NextTextStyle}{
        \ifthenelse{\equal{#1}{d}\OR\equal{#1}{display}}{\let\RightBracketSize=\NextDisplayStyle}{
          \ifthenelse{\equal{#1}{a}\OR\equal{#1}{auto}}{\let\RightBracketSize=\right}{
            \let\RightBracketSize=\relax}}}}}}}}}}
\newcommand{\T}{\mathbb{T}}
\DeclareMathOperator*{\esssup}{ess\,sup}
\newcommand{\BigO}[1]{\ensuremath{\operatorname{\mathcal{O}}\left(#1\right)}}
\newcommand{\E}[1]{{\mathbb E}\left[ #1 \right]}
\newcommand{\Prob}[1]{{\mathbb P}\left\{ #1 \right\}}
\begin{document}

\title{Variational Multiscale Nonparametric Regression: Smooth Functions}

\author{Markus Grasmair \smallskip \\
Department of Mathematical Sciences \\ Norwegian University of Science and Technology, 
Trondheim, Norway \\
\medskip\\
Housen Li, and Axel Munk\smallskip \\
    Institute for Mathematical Stochastics, University of G\"{o}ttingen \\
    and Max Planck Institute for Biophysical Chemistry, 
G\"{o}ttingen, Germany}

\maketitle

\begin{abstract}
For the problem of nonparametric regression of smooth functions, 
we {reconsider} and analyze a constrained variational approach, 
which we call the MultIscale Nemirovski-Dantzig (MIND) estimator.  
This can be viewed as a multiscale extension of the Dantzig selector
 (\emph{Ann. Statist.}, 35(6): 2313--51, 2009) based on early ideas of
Nemirovski (\emph{J. Comput. System Sci.}, 23:1--11, 1986). MIND
minimizes a homogeneous Sobolev norm under the constraint that 
the multiresolution norm of the residual is bounded by a universal threshold.
The main contribution of this paper is the derivation of
convergence rates of MIND with respect to $L^q$-loss, $1 \le q \le
\infty$, both almost surely and in expectation. {To this end,  we introduce} the method of
approximate source conditions. {For a
one-dimensional signal,} {these can be translated into} 
approximation properties of $B$-splines.
A remarkable consequence is that MIND attains almost
minimax optimal rates simultaneously for a large range of Sobolev and
Besov classes, {which provides certain adaptation.} 
Complimentary to the asymptotic analysis, we examine
the finite sample performance of MIND by numerical simulations.
\end{abstract}

{\it Keywords:} {Nonparametric regression};
{adaptation};
{convergence rates};
{minimax optimality};
{multiresolution norm};
{approximate source conditions.}

\section{Introduction} 
In this paper, we will consider the
 nonparametric regression problem to estimate a smooth function $f\colon [0,1]^d \to \R$
from $n$ measurements
\begin{equation}\label{problem}
y_n(x) = f(x)+\xi_n(x) \quad \text{ for } x \in \Gamma_n,
\end{equation}
where $\Gamma_n$ is the regular grid on $[0,1]^d$ containing $n$
equidistant points, and $\set{\xi_n(x)}{x\in\Gamma_n}$ a set of
independent, identically distributed (i.i.d.) {centered} sub-Gaussian random
{variables} with {scale} parameter $\sigma$, i.e., the common distribution function
$\Phi$ satisfies
\begin{equation}\label{eq:sub:gauss}
\int e^{\tau t}\Phi(dt) \le e^{(\tau\sigma)^2/2} \quad
\text{ for every } \tau \in \R.
\end{equation}
For simplicity, we assume that the truth $f$ can be extended periodically {to $\R^d$} to avoid boundary effects, 
and that the noise level $\sigma$ is  known.

\subsection{Variational statistical estimation}\label{subsec:var:stat:est}

{Since the fundamental work of \citep[][]{Nad64,Sto84} and many others, 
the literature on nonparametric regression techniques has become enormously rich and diverse, and has found its way into many textbooks, see~\citep{GreSil93,FanGij96,GyoEtal02,Tsy09,KorKor11} for example.}  A prodigious amount of {these} estimation methods can be casted in a variational framework, which {can be roughly categorized into} three different formulations: \emph{penalized estimation}, \emph{smoothness-constrained estimation}, and \emph{data-fidelity-constrained estimation}, see Figure~\ref{fig:three:variational:forms}.  

\begin{figure}[!tb]
\centering
\begin{tikzpicture}
\node [draw, rectangle, rounded corners, thick] at (0, 1.25) { %
    \begin{minipage}{0.27\textwidth}
    \emph{Penalized estimation}
     \begin{equation*}
     \min_{\tilde{f}} \mathcal{L}(\tilde{f}, y_n) + \lambda \mathcal{S}(\tilde{f})
     \end{equation*}   
     \end{minipage}};
 
 \node [draw, rectangle, rounded corners, thick] at (6.3, 0) { %
    \begin{minipage}{0.36\textwidth}
    \emph{Data-fidelity-constrained estimation}
     \begin{equation*}
     \min_{\tilde{f}} \mathcal{S}(\tilde{f})\quad \text{s.t. } \mathcal{L}(\tilde{f}, y_n) \le \gamma
     \end{equation*}   
     \end{minipage}};
 
  \node [draw, rectangle, rounded corners, thick] at (6.3, 2.5) { %
    \begin{minipage}{0.36\textwidth}
    \emph{Smoothness-constrained estimation}
     \begin{equation*}
	\min_{\tilde{f}} \mathcal{L}(\tilde{f}, y_n)\quad \text{s.t. } \mathcal{S}(\tilde{f}) \le \eta
     \end{equation*}   
     \end{minipage}};
 
\draw[<->, very thick] (6.5,0.9) -- (6.5,1.6);
\draw[<->, very thick] (2.35,1.2) -- (3.2,0);
\draw[<->, very thick] (2.35,1.3) -- (3.2,2.5);
  
\end{tikzpicture}

%
%
%
%
\caption{Variational statistical estimation. 
\label{fig:three:variational:forms}}
\end{figure}
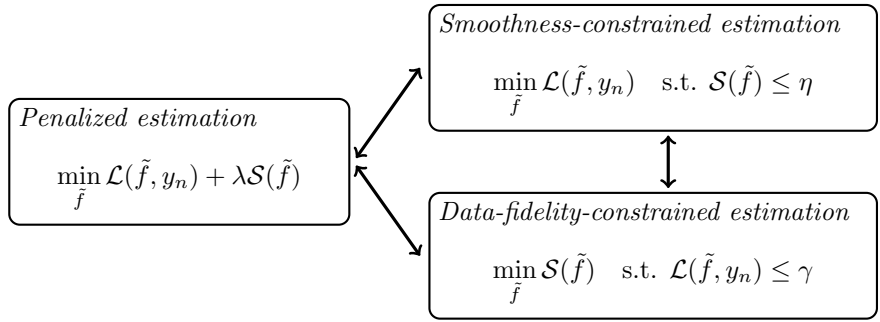

\textbf{Penalized estimation} is a solution of the Lagrangian variational problem (also known as generalized Tikhonov regularization)
\begin{equation}\label{eq:penalized:estimation}
\min_{{f}} \mathcal{L}({f}, y_n) + \lambda \mathcal{S}({f}). 
\end{equation}
The \emph{regularization term} $\mathcal{S}({f})$ accounts for {a-priori assumptions} of the truth $f$, such as smoothness, sparsity, etc.
The \emph{data fidelity term} $\mathcal{L}({f}, y_n)$ measures the deviation from the data $y_n$.  If $\mathcal{L}(\cdot, y_n)$ is the log-likelihood function of the model, {this amounts to} penalized maximum-likelihood regression~\citep[see e.g.][for general exposition]{vandeGeer88,EggLar09}. Prominent examples include smoothing splines~\citep{wahba1990spline}, {local polynomial estimators~\citep{FanGij96}}, and locally adaptive splines~\citep{MamGee97}. {It is known} that the choice of the balancing parameter $\lambda$ is in general subtle, although there are {nowadays} many data driven strategies, such as (generalized) cross validation~\citep{Wah77}, or Lepski{\u\i}'s balancing principle~\citep{Lep90}, {to mention a few. The latter} even provides adaptation over a range of generalized Sobolev scales, see e.g. \citep{GolNem97,LepMamSpo97}.

\textbf{Smoothness-constrained estimation} is to minimize the data fidelity term $\mathcal{L}$ under {the regularization constraint $\mathcal{S}$,} 
\begin{equation}\label{eq:smooth:constraint}
\min_{{f}} \mathcal{L}({f}, y_n)\qquad \text{subject to } \mathcal{S}({f}) \le \eta.
\end{equation}
It includes the well-known lasso~\citep{Tib96} {for $\mathcal{L} = \norm{\cdot - y_n}_{\ell^2}$ and $\mathcal{S} =\norm{\cdot}_{\ell^1}$} as a special case.  Another example is Nemirovski's (1985)
estimator $\hat{f}_{p,\eta}$ defined by
\begin{equation}\label{eq:def:nemirovski:est}
\hat{f}_{p,\eta} \in \argmin_{{f}}\norm{S_n{f} - y_n}_{\mathcal{B}}\qquad\text{ subject to } \norm{D^k {f}}_{L^p} \le \eta,
\end{equation} 
where $S_n$ denotes the \emph{sampling operator} on the grid $\Gamma_n$, and the \emph{multiresolution norm} $\norm{\cdot}_{\mathcal{B}}$ measures the maximum of normalized local averages (see Section~\ref{sec:mr:norm} for a formal definition). The estimator $\hat{f}_{p,\eta}$ is known to be minimax optimal (up to at most a log-factor) over {Sobolev ellipsoids $\{f; \norm{D^k f}_{L^p} \le \eta \}\subset W^{k, p}$,}
see~\citep{Nemirovski1985, NemLec00}. This indicates one drawback of this type {of estimator:} the choice of the threshold $\eta$ {determines} {a priori} the smoothness information (measured by $\mathcal{S}$) of the truth $f$, which is often unavailable in reality.  

\textbf{Data-fidelity-constrained estimation} {results from the ``reverse'' formulation} of~\eqref{eq:smooth:constraint}, given by
\begin{equation}\label{eq:data:fidelity:constraint}
\min_{{f}} \mathcal{S}({f})\qquad \text{subject to } \mathcal{L}({f}, y_n) \le \gamma.
\end{equation}
{Many basis (or dictionary) based} thresholding-type methods, such as soft-thresh\-olding \citep{Donoho95}, and block thresholding~\citep{HalEtal97,Cai99,Cai02,CaiZho09,CheFadSta10}, {can be written this way}. {Here $\gamma = \gamma_n$ {can be chosen as} a universal threshold, not depending on the data.} {For example, proper wavelet thresholding} provides spatial adaptivity, and is known to be minimax optimal for smooth functions, see~\citep{DoJo94,DJKP1995,DJKP96,HKPT98}, {while at the same time computationally fast as the thresholding is applied to each empirical wavelet coefficient, separately}. {Such adaptivity of wavelet based methods is also known for more general settings, such as linear inverse problems, see e.g.~\citep{Donobo95b,CaGoPiTs02,CohHofRei04,HofRei08}. 
The Dantzig selector~\citep{CanTao07} is also a particular data-fidelity-constrained estimator, originally introduced for linear models. In the nonparametric setting~\eqref{problem}, it has the form
\begin{equation}\label{eq:dantzig:selector}
\min_{{f} \in\R^{\Gamma_n}} \norm{{f}}_{\ell^1} \qquad \text{subject to }\norm{{f} - y_n}_{\ell^\infty} \le \gamma. 
\end{equation}
{Many other $\ell^1$-minimization approaches for recovering sparse signals also take the form of \eqref{eq:data:fidelity:constraint}, see~\citep{DonElaTem06,CaiWangXu10} for example.}

From a convex analysis point of view, all three estimation methods in Figure~\ref{fig:three:variational:forms} can be viewed as equivalent, as under weak assumptions~\citep{IvaVasTan02} each estimator in~\eqref{eq:penalized:estimation}, \eqref{eq:smooth:constraint}, \eqref{eq:data:fidelity:constraint} can be obtained as a solution of the other optimization problems via Fenchel duality~\citep[cf.][for this in the case of the lasso and the Dantzig selector]{BRT09}. 
The correspondence between {the parameters $\lambda,\eta,\gamma$,} however, is not given explicitly, and depends on the data $y_n$. It is exactly the lack of this explicit correspondence that makes the different {statistical} nature of these estimations. {From this perspective, the data-fidelity-constrained estimation~\eqref{eq:data:fidelity:constraint} has a certain appeal, since its threshold} parameter {can be chosen universally, i.e. only} determined by the noise characteristics {and the sample size $n$}, and {still} allows for a sound statistical interpretation. 
For instance, it can often be chosen in such a way that the truth $f$ satisfies the constraint {on the r.h.s. of~\eqref{eq:data:fidelity:constraint}} with probability at least $1-\alpha$, which immediately leads to the so called \emph{smoothness guarantee} of the estimate $\hat{f}$ in~\eqref{eq:data:fidelity:constraint}, 
\[
\inf_{f}\Prob{\mathcal{S}(\hat{f}) \le \mathcal{S}(f)} \ge 1- \alpha.
\]

\subsection{MIND estimator} 

In the literature, multiscale data-fidelity-constrained methods which do not explicitly rely on a specific basis or dictionary and hence do not allow for component or blockwise thresholding have also been around for some while. 
For example, \cite{Nemirovski1985} briefly discussed the ``reverse'' of his estimator~\eqref{eq:def:nemirovski:est} as well, which is given by
\begin{equation}\label{eq:revNem}
\min_{{f}} \norm{D^k{f}}_{L^p} \qquad \text{ subject to } \norm{S_n {f} -y_n}_{\mathcal{B}} \le \gamma.
\end{equation}
{These estimators} all combine variational minimization with so called multiscale testing statistics. Empirically, {they have been found} to perform very well and even outperform those explicit methods based on wavelets or dictionaries~\citep[cf.][]{CandesGuo02,DonHinRin11,FMM13}. In fact, the latter methods, as signal-to-noise ratio decreases, often show visually disturbing artifacts because of missing band pass information~\citep{CandesGuo02}.  
{On the other hand,} the computation of {such} multiscale data-fidelity-constrained estimators, {in general, leads to a high dimensional non-smooth convex optimization problem, remaining a burden for a long time.  However,  recently certain progress} has been made in the development of  algorithms for this type of problems~\citep[see][for example]{BecTeb09,ChPo11,FMM12b}.
{In the one dimensional case, fast algorithmic computation is sometimes feasible for specific functionals $\mathcal{S}$~\citep[e.g.][]{DavKov01,DavKovMei09,DueKov09,FMS14}.}
In contrast to these computational achievements, the underlying {statistical} theory for these methods is currently not well understood, in particular with regard to their asymptotic convergence behavior. In fact, there is only a small number of results in this direction we are aware of:  for fixed $k \in \N$ and $p \in [1, \infty]$, {and under the somewhat artificial assumption that the truth $f$ lies} in the constraint on the r.h.s. of~\eqref{eq:revNem}, \cite{Nemirovski1985} derived the convergence rate of~\eqref{eq:revNem} (i.e. $\mathcal{S}:=\norm{D^k \cdot}_{L^p}$) which coincides with the minimax rate over Sobolev ellipsoids in $W^{k, p}$ up to a log-factor.  Special cases of this result  have also appeared in~\citep{DavMei08} for $k = p = 2$, and in~\citep{DavKovMei09} for $k = 2,\, p=\infty$. 
In particular, adaptation of this type of estimators has not been provided so far, to the best of our knowledge. 
Intending to fill such gap, we focus on the ``reverse'' Nemirovski estimator~\eqref{eq:revNem} with $p = 2$, that is,
\begin{equation}\label{eq:constrainedprob}
\hat{f}_{\gamma_n} = \mathop{\arg\min}_{{{f}}} \frac{1}{2} \norm{D^k {f}}_{L^2}^2 
\qquad\qquad\text{ subject to }
\norm{S_n {f} - y_n}_{\mathcal{B}} \le \gamma_n,
\end{equation}
which we call the \emph{MultIscale Nemirovski-Dantzig estimator} (MIND). The choice of the name credits the fact that it is a particular ``reverse'' Nemirovski's estimator~\eqref{eq:revNem}, and  {the r.h.s.} is a (multiscale) extension of the Dantzig estimator~\eqref{eq:dantzig:selector}.

The main contribution of this work is trifold. First, we introduce the approximate source conditions~\citep{HofYam05,Hof06} from regularization theory and inverse problems into the statistical analysis of nonparametric regression.  By combining them with an improved interpolation inequality {of the multiresolution norm and Sobolev norms}, 
we are able to translate the statistical analysis into a deterministic approximation problem. The approximate source condition is essentially equivalent to smoothness concepts in terms of (approximate) variational inequalities \citep[cf.][]{HKPS07,SGGHL09,FleHof10} via Fenchel duality, see~\citep{Flemming2012}; {and conditions of this kind are fundamental for convergence analysis in inverse problems~\citep[see e.g.][Section 3.2]{EngHanNeu96}.}  

Second, we present both the $L^q$-risk convergence rate ($1 \le q \le \infty$) and the almost sure convergence rate for MIND, provided that an estimate of the approximate source condition is known. It is worth noting that the derivation of the $L^q$-risk convergence rate is more involved, for which one has to bound the size of MIND, when the truth does not lie in the multiscale constraint, {which notably extends \cite{Nemirovski1985}'s technique. Our analysis for such situation is built on the observation that the MIND estimator is always close to the data, which leads us to an upper bound on its $L^q$-loss in terms of  the multiresolution norm of the noise. The latter can be easily controlled because it has a sub-Gaussian tail.} 

Third, we show {a} \emph{partial adaptation} property of MIND in one dimension, in the sense that for a fixed $k \in \N$, it attains minimax optimality (up to a log-factor) simultaneously over Sobolev ellipsoids in $W^{s,p}$ and Besov ellipsoids in $B^{s,p'}_p$ for all $(s,p) \in [1, k] \times \{\infty\} \cup [k+1,2k]\times[2,\infty]$ and $p' \in [1,\infty]$. {These results explain to some extent the remarkably good multiscale reconstruction properties of MIND empirically found in various signal recovery and imaging applications, see Section~\ref{sec:num:example} and~\citep{CandesGuo02,DavKovMei09,FMM13}.} 

\subsection{Organization of the paper}
The rest of the paper is organized as follows: In
Section~\ref{sec:mr:norm}, we present the multiresolution norm
together with its deterministic and stochastic properties.
Section~\ref{sec:approx:source:condition} is devoted to
approximate source conditions {and so called distance functions, which provide methods for analyzing the $L^q$-loss ($1\le q \le \infty$) of MIND. }
Combining such general results
and an estimate of the distance functions, we obtain explicit
convergence rates for smooth functions, in the one dimensional case,
in Section~\ref{sec:conRate:1d}. These rates are further shown to be
minimax optimal up to a log-factor simultaneously over a large range
of Besov and Sobolev classes. In addition to the asymptotic results,
the finite sample behavior, as well as choices of the tuning
parameter, of MIND is examined empirically on simulated examples in
Section~\ref{sec:num:example}. The paper ends with discussions and
open questions in Section~\ref{sec:discuss}. Technical proofs are
given in the appendix.

\section{A heuristic explanation to MIND}

Before going into technical details, we illustrate the intuition behind {MIND's ability to recover features of the truth in a multiscale fashion} by a toy example. 

\begin{example}\label{toy:example}
Let us consider the estimation of a smooth function $f\colon [0,1] \to \R$
from  measurements
\[
y_i = f\bigl(\frac{i}{n}\bigr) + \eps_i \quad \text{ for } i = 0, \ldots, n-1, 
\]
with independent standard Gaussian error $\eps_i$. Assume now that we
have an estimator $\hat f \equiv \hat f_{s,t,a}$, such that
\[
\hat f_{s,t,a}\bigl(\frac{i}{n}\bigr) := f\bigl(\frac{i}{n}\bigr) +
s\varphi_a\bigl(\frac{i}{n}\bigr) + t \eps_i \qquad\text{ for } i = 0,
\ldots,n-1,
\]
where $s,t\ge0$, $a > 0$, $\varphi_a(x) := \sqrt{a}\varphi\left(a(x-1/2)\right)$ and 
\[
\varphi(x) := 
\begin{cases}
C e^{\frac{1}{x^2-1}} & \text{ if } \abs{x} < 1 \\
0 & \text{ if } \abs{x} \ge 1
\end{cases},
\] 
with the constant $C$ satisfying $\norm{\varphi}_{L^2} = 1$. That is,
the estimator $\hat f$ differs from the truth $f$ only by a
deterministic distortion $\varphi_a$ of scale $a$ and a random
perturbation $t\varepsilon$. By elementary computations one can show that
\begin{align*}
\Bigl|{t - \frac{s}{\sqrt{a}}}\Bigr|n \lesssim {\norm{\hat f -f }_{\ell^1}} &\lesssim \bigl(t+ \frac{s}{\sqrt{a}}\bigr)n, \\
 {\norm{\hat f - f}_{\ell^2}} &\sim (t + s)\sqrt{n},\\
 \norm{\hat f - f}_{\ell^\infty}& \sim t\sqrt{\log n} + s\sqrt{a},\\
\end{align*}
hold almost surely as $n \to \infty$.

These estimates indicate that the difference between $f$ and the estimator
$\hat f$ measured with respect to the $\ell^1$-norm depends on
the level of the random perturbation as well as the level and the
scale of the deterministic distortion. Moreover, both the random and
the deterministic part of the difference scale linearly with $n$,
which indicates that the $\ell^1$-norm is incapable of distinguishing
random from deterministic deviations. For the $\ell^2$-norm the
situation is similar. In contrast, in case of the $\ell^\infty$-norm, the
deterministic and the random part scale asymptotically differently,
and thus the $\ell^\infty$-norm can, in principle, distinguish between
these distortions. However, it also depends on the scale of the
deterministic distortion; if the scale of the deterministic distortion
is of order $\log n$, then again it is indistinguishable from random
noise.

Now note that one can also show that
\begin{equation}
 \frac{1}{\sqrt{2/(an)}}\biggl|{\sum_{\substack{i\in\N\\-\frac{1}{a}\le\frac{i}{n}-\frac{1}{2}\le\frac{1}{a}}}{\hat f(\frac{i}{n}) - f(\frac{i}{n})}}\biggr| \sim  t \sqrt{\log n} + s\sqrt{n},\label{eq:local:average}
\end{equation}
holds almost surely as $n \to \infty$.
Here, the deterministic and the random parts scale differently, and
the scale of the deterministic distortion does not influence the right
hand side of~\eqref{eq:local:average}.
These favorable properties are, however, based on the prior knowledge
of the support of the deterministic distortion $\varphi_a$, which
explicitly appears on the left hand side of~\eqref{eq:local:average}.
Still, it is possible to use the local averages
in~\eqref{eq:local:average} by taking the supremum over all possible
scales and locations of deterministic perturbation, which, basically,
results in the multiresolution norm. Later on we will see that this
approach results in the same asymptotic estimate
as~\eqref{eq:local:average}.
Therefore, the multiscale constraint of MIND with $\gamma_n \sim \log n$
guarantees that every feasible candidate contains no deterministic distortion, and the smoothness-enforcing 
regularization term selects the one without random distortion. 
The combination of both ensures that MIND avoids both deterministic and random distortions, thus close to the truth. 
\end{example}

\section{Multiresolution Norm}\label{sec:mr:norm}

Define a \emph{cube} $B$ to be a subset of $[0,1]^d$ of the form
$B = \prod_{i=1}^{d}[a_i,a_i+h)$, where $a_i \in \R$, $i=1,\ldots,d$,
and $0 < h \le 1$. By $\lvert B \rvert$ we denote its $d$-dimensional volume
$h^d$. As in~\eqref{problem}, let $\Gamma_n$ be the regular grid on $[0,1]^d$, 
\begin{equation}\label{eq:defGammaN}
\Gamma_n := \left\{(\frac{\tau_1}{n^{1/d}}, \ldots, \frac{\tau_d}{n^{1/d}}); \tau_i = 0,\ldots, n^{1/d}-1, \text{ for } i = 1,\ldots, d\right\}. 
\end{equation}

\begin{definition}\label{def:multiresolution:norm}
The \emph{multiresolution norm} $\norm{\cdot}_{\mathcal{B}}$ on $\R^{\Gamma_n}$ 
with respect to a {non-empty} system of cubes $\mathcal{B}$ is defined by
\begin{equation}\label{eq:def:mrnorm}
\norm{y}_{\mathcal{B}} := \sup_{B \in \mathcal{B}} \frac{1}{\sqrt{n(B)}} \Bigl|{\sum_{x \in \Gamma_n \cap B}y(x)}\Bigr|\quad \text{ for } y = \bigl(y(x)\bigr)_{x\in\Gamma_n} \in \R^{\Gamma_n}.
\end{equation}
Here 
\[
n(B):=\# \Gamma_n\cap B.
\]
Moreover, in the case of $n(B) = 0$, that is, $\Gamma_n\cap B = \emptyset$,
we set the term $\bigl\lvert\sum_{x \in \Gamma_n \cap B}y(x)\bigr\rvert/\sqrt{n(B)}$ to zero.
\end{definition}

Our main tool for the analysis of MIND will be estimates both for $\norm{\xi_n}_{\mathcal{B}}$
and for $\norm{S_n f}_{\mathcal{B}}$ that hold for sufficiently
rich systems of cubes. Here and in the following $S_n$ denotes the sampling operator on the set $\Gamma_n$ in~\eqref{eq:defGammaN}, provided that $f$ is continuous. 

\begin{definition}[\cite{Nemirovski1985}]\label{def:normal:sys}
A system $\mathcal{B}$ of distinct cubes is called \emph{normal}, if
there exists $c > 1$ such that
for every cube $B\subseteq [0,1]^d$ there exists a cube $\tilde{B} \in
\mathcal{B}$ such that $\tilde{B}\subseteq B$ and $\abs{\tilde{B}}
\geq \abs{B}/c$.
\end{definition}

In order to estimate the convergence rate of MIND for functions 
that are smoother than {imposed by the} regularization term, it is 
necessary to impose an additional regularity condition on the system
$B$.
\begin{definition}\label{def:m:regular}
  The system of cubes $\mathcal{B}$ is called \emph{regular} (or 
  \emph{$m$-regular}) for some $m\in\N$, $m \ge 2$,
  if it contains at least the \emph{$m$-partition system}, which is defined as all sets of the form
  $[\ell m^{-j},(\ell+1)m^{-j})$
  for all $\ell\in \N^d$, $j\in\N$.
\end{definition}

\begin{remark}\label{re:normSysExample}
Formally, a normal or regular system $\mathcal{B}$ is independent
of the grid $\Gamma_n$. Given a grid $\Gamma_n$, the value of the
multiresolution norm, however, depends on the intersection of
the cubes in $\mathcal{B}$ with $\Gamma_n$. In particular, it is the
\emph{(effective) cardinality} of distinct cubes of $\mathcal{B}$ on
$\Gamma_n$, that is, the number $\#\{B\cap \Gamma_n; B
\in\mathcal{B}\}$, that determines the computational complexity of the
evaluation of the multiresolution norm. In order to obtain numerically
feasible algorithms, one therefore would like to choose this effective
cardinality as small as possible while still ensuring normality or regularity of
$\mathcal{B}$.

\begin{itemize}
\item[a)]
The system of all distinct cubes is clearly normal and regular. Its corresponding multiresolution norm {also appears as a particular} scan statistics (maximum likelihood ratio statistic in {the} Gaussian setting), which examines the signal at every scale and location. This is a standard tool for detecting a deterministic signal with unknown spatial extent against a noisy background, see e.g.~\citep{GB99,SieYak00,DueSpok01}.
 However, the  cardinality of all the cubes on $\Gamma_n$ is $\BigO{n^2}$, 
making it {computationally} impractical for large scale problems. In practice, sparser normal systems, while retaining multiresolution nature, are {therefore} preferable.  Some examples are given below. 
\item[b)]
The system of cubes with dyadic edge lengths is normal, and of effective cardinality $\BigO{n\log n}$ on $\Gamma_n$. It is easy to see that this system is $2$-regular. 
\item[c)]
Sparse system with optimal detection power. 
In one dimension, {a normal system of $\BigO{n}$ intervals can be constructed from the system introduced in \citep{RivWal12}, by including some intervals of small scales (i.e. length $\le \log (n) / n$). This system is still sufficiently rich to be statistically optimal, in {the} setting of bump detection in the intensity of a Poisson process or in a density, but it is not regular.} The heuristics behind is that after considering one interval, not much is gained by looking at intervals of similar scales and similar locations {\citep[see also][]{ChaWal13}.}  For higher dimensions, such system can be constructed similarly, see~\citep{Wal10,SAC14}. 
\item[d)]
{The $m$-partition system has effective cardinality $\BigO{n}$ on $\Gamma_n$, and is normal and obviously $m$-regular.} As will be shown in
Section~\ref{sec:conRate:1d}, it is rich enough for nearly optimal
estimation of smooth functions (see Section~\ref{sec:num:example} for
its practical performance).  In particular, for $m=2$, it corresponds to the support set of the 
wavelet multiresolution scheme. 
\end{itemize}

It is clear that every regular system of cubes is necessarily
  normal. The converse, however, need not hold. That is, there exist
  normal systems of {cubes} that are not $m$-regular for any $m \in
  \N$ (cf. the second example above).

Finally, we note that the multiresolution norm is, actually, not necessarily
a norm but {always} a semi-norm. That is, it can happen that
$\norm{y}_{\mathcal{B}} = 0$ although the vector $y \in \R^{\Gamma_n}$ is different
from zero. Obviously this is the case if $B\cap \Gamma_n = \emptyset$ for
all $B \in \mathcal{B}$, in which case $\norm{\cdot}_{\mathcal{B}}$ is identically
zero. If the system $\mathcal{B}$ is normal, however, this situation cannot
occur for $n$ sufficiently large: the normality of $\mathcal{B}$ implies in
particular that $\mathcal{B}$ contains a cube of volume at least $1/c$,
which, for $n > c$ necessarily has a non-empty intersection with the grid $\Gamma_n$.
Still it is possible that $\norm{y}_{\mathcal{B}} = 0$ for some non-zero $y$.
On the other hand, if $\mathcal{B}$ is normal and $f\colon [0,1]^d\to \R$ is
continuous and non-zero, then there exists some $n_0\in\N$ such that
$\norm{S_n f}_{\mathcal{B}} \neq 0$ for all $n \ge n_0$, which means that
the multiresolution norm of the point evaluation of a continuous non-zero
function will eventually become non-zero.
For simplicity, we will {consider only systems $\mathcal{B}$, s.t. $\norm{\cdot}_{\mathcal{B}}$ is a norm}, 
which allows us later to define its dual norm.
Moreover, in the important case of the $m$-partition system,
it is easy to see that $\norm{\cdot}_{\mathcal{B}}$ is {indeed} a norm,
and that it can be bounded below by the maximum norm on $\R^{\Gamma_n}$.
\end{remark}

The main property of the multiresolution norm is that it allows to
distinguish between random noise and smooth functions, see Example~\ref{toy:example}. As the number $n$
of sampling points increases, the multiresolution norm of a smooth
function increases with a rate of $n^{1/2}$.
In contrast, the multiresolution norm of i.i.d.~Gaussian noise
can be expected to grow only with a rate of $\sqrt{\log n}$.
More precisely, the multiresolution norm has the following properties:

\begin{proposition}\label{prop:mulitresolution_probability}
Let $\theta > 0$, $\mathcal{B}$ be a system of cubes and
$\xi_n:=\set{\xi_n(x)}{x\in\Gamma_n}$ a set of i.i.d. sub-Gaussian random variables~\eqref{eq:sub:gauss} with parameter $\sigma > 0$. Then there exists a constant $C_{\theta}$ such that
\begin{align*}
&\Prob{\|\xi_n\|_{\mathcal{B}}\geq t} \leq \min\left\{1, 2 n^{2} e^{-\frac{t^2}{2\sigma^2}}\right\}, \\
&\E{\|\xi_n\|^\theta_{\mathcal{B}}} \leq C_{\theta} \left(\sigma{\sqrt{\log n}}\right)^\theta \quad \text{ for every } n > 1.
\end{align*}
\end{proposition}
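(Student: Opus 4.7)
The plan is to first reduce the semi-norm $\|\xi_n\|_{\mathcal B}$ to a finite supremum of normalized sums and apply a union bound combined with a sub-Gaussian deviation estimate, and then to extract the moment bound from the tail bound by integration by parts.

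For the tail bound, observe that for each cube $B\in\mathcal B$ with $n(B)\ge 1$, the quantity
$Z_B := n(B)^{-1/2}\sum_{x\in\Gamma_n\cap B}\xi_n(x)$
is a normalized sum of $n(B)$ i.i.d.\ centered sub-Gaussian variables with parameter~$\sigma$; hence $Z_B$ itself is sub-Gaussian with parameter~$\sigma$, and the standard Chernoff/Markov argument applied to $e^{\tau Z_B}$ and $e^{-\tau Z_B}$ yields $\Prob{|Z_B|\ge t}\le 2e^{-t^2/(2\sigma^2)}$. Next, I would argue that although $\mathcal B$ may be infinite, the value of $\|\xi_n\|_{\mathcal B}$ only depends on the finite collection of distinct subsets $B\cap\Gamma_n$ for $B\in\mathcal B$. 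A short combinatorial step shows that for the regular $n^{1/d}\times\cdots\times n^{1/d}$ grid, any cube intersects $\Gamma_n$ in a subset indexed by a starting multi-index and a common side length, giving at most $n^{1+1/d}\le n^2$ distinct such intersections. Taking the union bound over these, $\Prob{\|\xi_n\|_{\mathcal B}\ge t}\le 2n^2 e^{-t^2/(2\sigma^2)}$; pairing this with the trivial bound~$1$ gives the first inequality.

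For the moment bound, I would use the layer-cake identity
\[
\E{\|\xi_n\|_{\mathcal B}^\theta}=\int_0^\infty \theta t^{\theta-1}\Prob{\|\xi_n\|_{\mathcal B}\ge t}\,dt,
\]
and split the integral at the threshold $t_0:=\sigma\sqrt{(4+\theta)\log n}$. On $[0,t_0]$ I simply bound the probability by~$1$, producing a contribution of order $t_0^{\theta}\lesssim_\theta (\sigma\sqrt{\log n})^\theta$. On $[t_0,\infty)$ I use the sub-Gaussian tail $2n^2 e^{-t^2/(2\sigma^2)}$ and the substitution $u=t^2/(2\sigma^2)$ to reduce the remaining integral to an incomplete Gamma integral $\Gamma(\theta/2,u_0)$ with $u_0=(4+\theta)(\log n)/2$; since the exponential decay $e^{-u_0}=n^{-(4+\theta)/2}$ beats the $n^2$ prefactor by a polynomial margin, this second piece is bounded by a $\theta$-dependent multiple of $(\sigma\sqrt{\log n})^{\theta}$ as well (in fact it is $o(1)$ as $n\to\infty$). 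Adjusting the constant absorbs the small-$n$ case $n>1$, yielding the stated estimate.

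The only non-routine point is bounding the effective cardinality $\#\{B\cap\Gamma_n:B\in\mathcal B\}\le n^2$ uniformly in the system $\mathcal B$; once this combinatorial fact is in place, the probabilistic part is a textbook union bound and the moment estimate is an elementary tail integration. I therefore expect the cardinality step to be the main thing to justify carefully, while the rest follows standard sub-Gaussian maximal-inequality technology.
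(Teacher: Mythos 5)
Your proposal is correct and follows essentially the same route as the paper, which itself only sketches the argument as ``a simple union bound'' for the tail estimate and the layer-cake identity $\E{\norm{\xi_n}_{\mathcal{B}}^{\theta}}=\int_0^\infty\theta t^{\theta-1}\Prob{\norm{\xi_n}_{\mathcal{B}}\ge t}\,dt$ for the moment bound. The cardinality step you flag is indeed the only point needing care, and the crude count that every intersection $B\cap\Gamma_n$ is a product of $d$ integer intervals in $\{0,\dots,n^{1/d}-1\}$ (at most $n^{2/d}$ choices per coordinate, hence at most $n^2$ in total) closes it cleanly.
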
 
The first result follows from a simple union bound~\citep{Nemirovski1985}, and the second from the first using 
\[
\E{\norm{\xi}_{\mathcal{B}}^{\theta}} = \int_{0}^{\infty} \theta t^{\theta - 1} \Prob{\norm{\xi}_{\mathcal{B}} \ge t} dt. 
\]

The next result provides an interpolation inequality for the $L^q$-norm
of a function in terms of its multiresolution norm and the norm of its $k$-th
order derivative. 
For $k > d/2$, $k$, $d \in \N$ and $1 \le q \le \infty$, {let}
\begin{equation}\label{define_vartheta}
\vartheta=\vartheta(k, d, q) := 
\begin{cases}
\dfrac{k}{2k+d} & \text{if } q \le \dfrac{4k+2d}{d}, \\
\dfrac{k-d/2+d/q}{2k} & \text{if } q \ge \dfrac{4k +2d}{d}.
\end{cases}
\end{equation}

\begin{proposition}\label{prop:interpolation_regression}
Let  $\mathcal{B}$ be a normal system of cubes, $k, d\in\N$, $k > d/2$ and $1 \le q \le \infty$. Then there exist constants $C$ and $n_0$, both depending only on $k$ and $d$, such that for every $f$ and for $n\ge n_0,$
\begin{equation}\label{ineq:regression}
\|f\|_{L^q} \leq C
\max\left\{ \frac{ \|S_n f\|_{\mathcal{B}}^{2\vartheta} }{n^{\vartheta}}\|D^k f\|_{L^2}^{1-2\vartheta},\,\frac{\norm{S_n f}_{\mathcal{B}}}{n^{1/2}},\,\frac{\norm{D^k f}_{L^2}}{n^{\vartheta'}}\right\},
\end{equation}
where $\vartheta=\vartheta(k, d, q)$ is given by~\eqref{define_vartheta}
and $\vartheta' := 2k\vartheta/d$.
\end{proposition}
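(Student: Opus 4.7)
The plan is to reduce the inequality to a one-parameter family of approximation estimates indexed by a scale $h\in[n^{-1/d},1]$, of the form
\[
\norm{f}_{L^q}\leq C\bigl(h^{\alpha(q)}\norm{D^k f}_{L^2}+(nh^d)^{-1/2}\norm{S_n f}_{\mathcal B}\bigr),
\]
and then to minimize over $h$. The three terms in the stated maximum correspond to the three possible optima: when the minimizing $h^*$ lies in the interior of $[n^{-1/d},1]$ one gets term~1; when $h^*\ge 1$, setting $h=1$ makes $(nh^d)^{-1/2}\norm{S_n f}_{\mathcal B}=\norm{S_n f}_{\mathcal B}/\sqrt n$ dominant, which is term~2; when $h^*\le n^{-1/d}$, setting $h=n^{-1/d}$ makes $h^{\alpha(q)}\norm{D^k f}_{L^2}=\norm{D^k f}_{L^2}/n^{\vartheta'}$ dominant, which is term~3, with the relation $\vartheta'=2k\vartheta/d$ emerging directly from the exponent arithmetic.

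To prove the one-parameter estimate, I would partition $[0,1]^d$ into disjoint cubes $B_j$ of side $h$ and, by the normality of $\mathcal B$, select for each $B_j$ a cube $\tilde B_j\in\mathcal B$ with $\tilde B_j\subseteq B_j$ and $\abs{\tilde B_j}\ge h^d/c$; for $h\ge n^{-1/d}$ and $n\ge n_0$ large enough, this gives $n(\tilde B_j)\gtrsim nh^d$. Set $T_h f|_{B_j}:=\bar f^n_{\tilde B_j}=\frac{1}{n(\tilde B_j)}\sum_{x\in\Gamma_n\cap\tilde B_j}f(x)$. The very definition of the multiresolution norm then gives $\abs{\bar f^n_{\tilde B_j}}\le\norm{S_n f}_{\mathcal B}/\sqrt{n(\tilde B_j)}\lesssim(nh^d)^{-1/2}\norm{S_n f}_{\mathcal B}$, so $\norm{T_h f}_{L^q}\le\norm{T_h f}_{L^\infty}\lesssim(nh^d)^{-1/2}\norm{S_n f}_{\mathcal B}$ for every $q$ (since $[0,1]^d$ has unit volume). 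For the approximation error, Poincaré--Sobolev on each cube yields $\norm{f-\bar f_{B_j}}_{L^q(B_j)}\lesssim h^{k-d/2+d/q}\norm{D^k f}_{L^2(B_j)}$, valid because $k>d/2$ embeds $W^{k,2}(B_j)$ into $L^q(B_j)$ for every $q$; the residuals $\abs{\bar f_{B_j}-\bar f_{\tilde B_j}}$ and the quadrature error $\abs{\bar f_{\tilde B_j}-\bar f^n_{\tilde B_j}}$ are controlled by the same embedding applied to $f-\bar f_{\tilde B_j}$, whose constant part disappears under both the integral and the grid-sum, giving the matching scaling $h^{k-d/2}\norm{D^k f}_{L^2(B_j)}$. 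Summing the local $L^q$-estimates via $\ell^2\hookrightarrow\ell^q$ for $q\ge 2$ (or its reverse for $q\le 2$) produces the global bound with $\alpha(q)=k-d/2+d/q$ in the former range and $\alpha(q)=k$ in the latter.

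The endpoint cases $q=2$ and $q=\infty$ then yield exactly the stated interior-optimum exponents $k/(2k+d)$ and $(k-d/2)/(2k)$, respectively. The piecewise formula $\vartheta(k,d,q)$ for intermediate $q$ is obtained by Hölder interpolation against these two endpoints, using $\norm{f}_{L^q}\le\norm{f}_{L^2}^{1-\lambda}\norm{f}_{L^\infty}^\lambda$ with $\lambda=1-2/q$ for $q\ge 2$, combined with the unit-volume embedding $\norm{f}_{L^q}\le\norm{f}_{L^{(4k+2d)/d}}$ to flatten $\vartheta$ on the range $q\le(4k+2d)/d$. After substituting the endpoint bounds at a common scale $h$, expanding via $(a+b)^r\le a^r+b^r$ for $r\in(0,1]$, and selecting $h$ according to the regime at hand (interior, $h=1$, or $h=n^{-1/d}$), one checks that the resulting cross-products are exactly the three terms of the stated maximum---with one of them always independent of $h$ and equal to term~1. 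The principal technical obstacle is the quadrature-error step: the estimate must depend solely on $\norm{D^k f}_{L^2(\tilde B_j)}$ and not on lower-order Sobolev norms, which forces the subtract-the-mean trick together with the hypothesis $k>d/2$ (so that $W^{k,2}\hookrightarrow C$). The constant $n_0$ is then chosen so that at the smallest admissible scale $h=n^{-1/d}$ every $\tilde B_j$ still contains the grid points required for the averaging-based arguments.
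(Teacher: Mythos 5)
Your overall architecture is the right one — a scale-$h$ bias--variance decomposition $\norm{f}_{L^q}\lesssim h^{\alpha(q)}\norm{D^kf}_{L^2}+(nh^d)^{-1/2}\norm{S_nf}_{\mathcal B}$ followed by optimization over $h\in[n^{-1/d},1]$, with the three terms of the maximum arising from the interior optimum and the two boundary cases, and your exponent arithmetic at the endpoints $q=2$, $q=\infty$ and at $h=n^{-1/d}$ is consistent with \eqref{define_vartheta} and $\vartheta'=2k\vartheta/d$. This is indeed the skeleton of Nemirovski's argument, to which the paper defers (the proof of Proposition~\ref{prop:interpolation_regression} is omitted in the paper and replaced by a pointer to \citep{Nemirovski1985,NemLec00} with averaged Taylor polynomials and a Besicovitch cover).

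The gap is in the central approximation step, and it is fatal as written. You claim $\norm{f-\bar f_{B_j}}_{L^q(B_j)}\lesssim h^{k-d/2+d/q}\norm{D^kf}_{L^2(B_j)}$, i.e.\ $k$-th order accuracy for approximation by a \emph{constant} (the mean). Subtracting the mean annihilates only degree-$0$ polynomials, so Poincar\'e/Bramble--Hilbert gives at best first-order gain: for $d=1$, $k=2$, $f(x)=x-\tfrac12$ on a cell $B_j$, the right-hand side is zero ($D^2f\equiv 0$) while $f-\bar f_{B_j}$ is a nonzero linear function. The same defect infects your quadrature-error and sup-norm steps. To obtain the rate $h^{k-d/2+d/q}$ one must approximate $f$ on each window $B_j$ by a polynomial of degree $k-1$ (an averaged Taylor polynomial), and then — this is the part your single average per cell cannot deliver — control that polynomial through the multiresolution norm. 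One local average determines only the constant coefficient; Nemirovski's device is to place roughly $\dim\mathcal P_k$ sub-cubes inside each window, reconstruct the polynomial coefficients from the corresponding local averages by inverting a fixed nonsingular moment matrix, and bound the reconstruction error by $\norm{S_nf}_{\mathcal B}/\sqrt{nh^d}$ times the (scale-invariant) norm of that inverse. Without this local polynomial reconstruction your one-parameter inequality is not established for any $k\ge 2$, even though the inequality itself is true. (A secondary, repairable point: for $q<2$ the passage from local to global bounds is not an $\ell^q$--$\ell^2$ embedding in the direction you invoke; it is H\"older against the cube count $h^{-d}$, which is what converts the exponent $k-d/2+d/q$ into $k$.)
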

\begin{remark}
The inequality~\eqref{ineq:regression} actually holds for general $\norm{D^k f}_{L^p}$ ($1 \le p \le \infty$) in place of $\norm{D^k f}_{L^2}$, provided that $k > d/p$ or $k \ge d$ and $p = 1$, and proper choices of $\vartheta$, $\vartheta'$. This is a generalization of the interpolation inequality by~\cite{Nemirovski1985}.
The original version  holds only for normal systems with $c = 6$ (cf. Definition~\ref{def:normal:sys}) and $p > d$. In fact, one can prove the general inequality~\eqref{ineq:regression} similarly as~\citep{Nemirovski1985,NemLec00}, while replacing the use of Taylor polynomials and Vitali cover by that of averaged Taylor polynomials~\citep[][Chapter 4]{BreRid08} and Besicovitch cover~\citep{Bes45,Bes46}. Thus, the proof is omitted here. 

Note, that $k > d/p$ or $k \ge d$ and $p = 1$ is the weakest condition to ensure the continuity of $f$, which guarantees that the evaluation $S_n$ and the multiresolution norm $\norm{\cdot}_{\mathcal{B}}$ are well-defined.  In this sense, Proposition~\ref{prop:interpolation_regression} is already in its most general form. 
\end{remark}

\section{Approximate Source Conditions}\label{sec:approx:source:condition}

For technical simplicity, we will make the following assumption {(see, however, Section~\ref{sec:discuss})}. 
\begin{assumption}
Every function $f$ is defined on the $d$-dimensional torus $\T^d \sim \R^d / \Z^d$ (or equivalently \emph{periodic}), and has \emph{mean zero}, i.e. $$\int_{\T^d} f(x)\,dx = 0.$$ 
\end{assumption}
We denote by $L^p_{0}(\T^d)$ with $1 \le p \le \infty$ the
space of all $L^p$-functions with zero mean. 
We note that $L^p_0(\T^d)$ is a closed subspace of $L^p(\T^d)$
for all $p$. 
{Similarly, we will denote by $H^k_0(\T^d) := H^k(\T^d) \cap L^2_0(\T^d)$
the space of all $k$-th order Sobolev functions with zero mean.} {We define}
the \emph{homogeneous Sobolev norm} 
$\norm{D^k \cdot}_{L^2}( =: \norm{\cdot}_{H^k_0})$ as the norm in $H^k_0(\T^d)$.

In order to derive convergence rates for MIND, we will {now introduce more recent techniques}
from regularization theory and inverse problems, 
{which have not been applied in a statistical context so far, to the best of our knowledge.}
To that end we interpret the problem of nonparametric regression
as the inverse problem of solving the equation $S_n f = y_n$ for $f$,
where we see $S_n$ as a mapping from $H^k_0(\T^d)$ to $\R^{\Gamma_n}$, {see also \citep{BisHohMunRuy07}}. 
If $k > d/2$, which we always assume, it follows from the Sobolev
embedding theorem~\citep[see e.g.][Theorem 4.12]{AF03} that $H^k_0(\T^d)$ is continuously embedded
in the space of all continuous functions, which in turn implies that
the mapping $S_n$ is bounded.
Typical conditions in regularization theory that allow the derivation
of estimates of the quality of the reconstruction in dependence of
the actually realized noise level on $y_n$ are so called \emph{source
conditions}. In this setting, they would usually be formulated as
the condition that $f = S_n^* \omega$ for some \emph{source element}
$\omega\in \R^{\Gamma_n}$, where 
$S_n^*\colon \R^{\Gamma_n}\to H^k_0(\T^d)$ denotes the adjoint of
the sampling operator $S_n$ with respect to the norm on 
$H^k_0(\T^d)$~\citep[see][]{Gro84,EngHanNeu96}.

Such an assumption, however,  {is quite restrictive in this setting;}
for instance, for $d = 1$, it basically implies that the function $f$
is a spline.
Therefore we use a different, but related, approach based on
\emph{approximate source conditions}~{\citep[see][]{HofYam05,Hof06}}.
Here, the idea is to measure how well the function $f$ can be
approximated by functions of the form $S_n^*\omega$ for approximate
source elements $\omega$ of given norm $t \ge 0$; we thus obtain
a function $d\colon\R_{\ge 0} \to \R_{\ge 0}$, which measures for each 
$t\ge 0$ the distance between $f$ and the image of the ball of radius
$t$ under $S_n^*$. In~\citep{HofYam05}, this function $d$ has been
called \emph{distance function}. Its asymptotic properties, {as the deterministic 
``noise level'' goes to zero},  have been used
to obtain convergence rates for the solution of inverse problem.

In order to apply this approach to nonparametric regression using
the multiresolution norm, we have to consider two refinements.
First, we are interested in the asymptotics as $n\to \infty$,
which means that the operator $S_n$ we are considering changes
as well. Therefore, we will have to regard instead of a single
distance function a whole family of distance functions
$d_n\colon\R_{\ge 0}\to\R_{\ge 0}$, one for each possible grid size.
Second, since we are measuring the defect of the solution not
with respect to the usual Euclidean norm but rather with respect
to the multiresolution norm, we have to measure the approximation
quality of an approximate source element in terms of the dual
multiresolution norm~\citep[see][for a similar argumentation
in the case of Banach space regularization]{Hei08}. {This complicates
the theory considerably, since the (dual) multiresolution norm is neither 
uniformly smooth nor uniformly convex.}

\subsection{Distance Function for Multiscale Regression}\label{ss:distance}

We start by considering the dual $\norm{\cdot}_{\mathcal{B}^*}$ of
the multiresolution norm on $\R^{\Gamma_n}$ with respect to the set of
cubes $\mathcal{B}$. This norm is defined as
\[
\norm{\omega}_{\mathcal{B}^*} = \max\set{\sum_{x\in\Gamma_n}
  \omega(x)v(x)}{v \in \R^{\Gamma_n},\ \norm{v}_{\mathcal{B}} \le 1}.
\]
From the definition of the multiresolution norm in~\eqref{eq:def:mrnorm} it readily follows
that for proper real numbers $(c_B)_{B \in \mathcal{B}}$
\begin{equation}\label{eq:dual_multiresolution_norm}
\norm{\omega}_{\mathcal{B}^*} = \min\set[B]{\sum_{B\in\mathcal{B}}
    \abs{c_B}\sqrt{n(B)}}{\omega(x) = \sum_{B \ni x}  c_B \text{ for
      all } x \in \Gamma_n}.
\end{equation}

Next note that,
since $S_n\colon H^k_0(\T^d) \to \R^{\Gamma_n}$ is bounded linear, it has an adjoint $S_n^*\colon
\R^{\Gamma_n}\to H^k_0(\T^d)$, which is defined by the equation
\[
\sum_{x\in\Gamma_n} f(x)\omega(x) = 
\inner{f}{S_n^*\omega}_{H^k_0} = \inner{D^k f}{D^k
  S_n^*\omega}_{L^2}
= \int_{\T^d} D^k f\,D^k S_n^*\omega\,dx.
\]

\begin{definition}
  The \emph{multiscale distance function} for $f$ is defined as
   \[
  d_n(t):= \min_{\norm{\omega}_{\mathcal{B}^*} \le t}
  \norm{D^k S_n^*\omega - D^k f}_{L^2} 
  = \min_{\norm{\omega}_{\mathcal{B}^*} \le t}
  \norm{S_n^*\omega-f}_{H^k_{0}} .
  \]
\end{definition}

Thus the function $d_n(t)$ measures the distance between $f$
and the image of the ball of radius $t$ with respect to
$\norm{\cdot}_{\mathcal{B}^*}$ under the mapping $S_n^*$. Put
differently, it describes how well the function $f$ can be
approximated (with respect to the homogeneous $k$-th order Sobolev
norm, {$\norm{D^k\cdot}_{L^2} \equiv \norm{\cdot}_{H^k_0}$}) by functions in the range of $S_n^*$.

In the following we will provide some description of the mapping
$S_n^*$. To that end we denote for every $x \in \Gamma_n$ by $e_x \in
\R^{\Gamma_n}$ the standard basis vector at $x$ defined by 
\[
e_x(z) = \begin{cases}
  1 & \text{ if } z = x,\\
  0 & \text{ else.}
\end{cases}
\]
Moreover we define
\[
\varphi_x := S_n^* e_x.
\]
Then we have for every $f \in H^k_0(\T^d)$ the equality
\[
f(x) = \int_{\T^d} D^k u\,D^k \varphi_x\,dy.
\]
Now let $f \in H^k(\T^d)$ be arbitrary. Then $f-\int_{\T^d} f\,dz
\in H^k_{0}(\T^d)$ and therefore,
\[
f(x)-\int_{\T^d}f\,dz
= \int_{\T^d} D^k f\,D^k \varphi_x\,dz
= (-1)^k \int_{\T^d} f\,\Delta^k \varphi_x\,dz
= \inner{f}{(-1)^k\Delta^k \varphi_x}_{L^2}
\]
for every $f \in H^k(\T^d)$.
Since $f(x) = \inner{f}{\delta_x}$, we obtain that $\varphi_x
= S_n^*e_x$ is the unique weak solution in $H^k_0(\T^d)$ of
the equation\footnote{There is a solution of series form: 
\[
\varphi_x(z) = \sum_{\lambda \in \Z^d \setminus \{0\}}(2\pi\norm{\lambda})^{-2k} e^{2\pi i \lambda\cdot(z-x)} 
= \sum_{\lambda \in \N_0^d \setminus \{0\}}2(2\pi\norm{\lambda})^{-2k} \cos\left(2\pi \lambda\cdot(z-x)\right). 
\]}
\[
(-1)^k\Delta^k \varphi_x = \delta_x - 1.
\]

Moreover we have for general $\omega \in \R^{\Gamma_n}$, $\omega =
\sum_{x\in\Gamma_n} \omega_x e_x$, the representation
\[
S_n^*\omega = \sum_{x\in\Gamma_n} \omega_x \varphi_x.
\]
Then the definition of $d_n(t)$ implies that
\begin{equation}\label{eq:dn:basis:version}
d_n(t) = \min\set[B]{\norm{f - \sum_{x\in\Gamma_n}c_x
    \varphi_x}_{H^k_0}}{\norm{(c_x)_{x\in\Gamma_n}}_{\mathcal{B}^*} \le t}.
\end{equation}
Because of the definition of the dual multiresolution norm, we can
further rewrite this by introducing the functions
\[
\varphi_B := \sum_{x \in B\cap \Gamma_n} \varphi_x \qquad \text{ for }B \in \mathcal{B}.
\]
We then obtain the representation
\begin{equation}\label{eq:dn:frame:version}
d_n(t) = \min\set[B]{\norm{f-\sum_{B\in\mathcal{B}} c_B
    \varphi_B}_{H^k_0}}{\sum_{B \in \mathcal{B}} \abs{c_B} \sqrt{n(B)} \le t}.
\end{equation}

\subsection{Abstract Convergence Rates}

We consider now the MIND estimator $\hat{f}_{\gamma_n}$,
which is defined as the solution of the optimization problem
given in~\eqref{eq:constrainedprob}.
Our first result provides an estimate of the accuracy of MIND, measured in terms of an $L^q$-norm, under
the assumption that the multiresolution norm of the error
is bounded by $\gamma_n$. While the result is purely deterministic,
it immediately allows for the derivation of almost sure convergence
rates by adapting the parameter $\gamma_n$
to the number of measurements.

\begin{theorem}\label{th:constrainedrate}
  Let $k$, $d\in\N$, $k > d/2$ and $1\le q \le \infty$.  Assume that {$\mathcal{B}$ is normal} and the inequality
  \[
  \norm{\xi_n} = \norm{S_nf -y_n}_{\mathcal{B}} \le \gamma_n
  \]
  is satisfied, and denote by
  $\hat{f}_{\gamma_n}$ the MIND estimator~\eqref{eq:constrainedprob}.
  In addition, define
  \[
  c_n := \min_{t \ge 0}\bigl(d_n(t) + (\gamma_n t)^{1/2}\bigr).
  \]
  Then there exist constants $C > 0$ and $n_0\in\N$,
  both depending only on $k$ and $d$,  such that
  \begin{equation}\label{eq:constrained_est}
  \norm{\hat{f}_{\gamma_n}-f }_{L^q}
  \le C \max\Bigl\{
  \frac{\gamma_n^{2\vartheta}c_n^{1-2\vartheta}}{n^\vartheta},\
  \frac{\gamma_n}{n^{1/2}},\
  \frac{c_n}{n^{\vartheta'}}
  \Bigr\}
  \quad\text{ for } n\ge n_0,
  \end{equation}
 where $\vartheta=\vartheta(k, d, q)$ is given by~\eqref{define_vartheta}
and $\vartheta' := 2k\vartheta/d$.
\end{theorem}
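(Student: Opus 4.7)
The plan is to apply the interpolation inequality of Proposition~\ref{prop:interpolation_regression} to the error $\hat{f}_{\gamma_n} - f$, which reduces the task to bounding (i) the multiresolution norm $\norm{S_n(\hat{f}_{\gamma_n} - f)}_{\mathcal{B}}$ of the sampled error, and (ii) the homogeneous Sobolev norm $\norm{D^k(\hat{f}_{\gamma_n} - f)}_{L^2}$. Both $\hat{f}_{\gamma_n}$ and $f$ are admissible for the constrained optimization problem~\eqref{eq:constrainedprob} under the standing hypothesis $\norm{S_n f - y_n}_{\mathcal{B}} \le \gamma_n$, so (i) follows immediately from the triangle inequality, $\norm{S_n\hat{f}_{\gamma_n} - S_nf}_{\mathcal{B}} \le \norm{S_n\hat{f}_{\gamma_n} - y_n}_{\mathcal{B}} + \norm{y_n - S_n f}_{\mathcal{B}} \le 2\gamma_n$.

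The heart of the argument is bounding $A := \norm{D^k(\hat{f}_{\gamma_n}-f)}_{L^2}$ by the quantity $c_n$. Since $f$ is feasible and $\hat{f}_{\gamma_n}$ is the minimizer of $\tfrac{1}{2}\norm{D^k\cdot}_{L^2}^2$ over the feasible set, we have $\norm{D^k\hat{f}_{\gamma_n}}_{L^2}^2 \le \norm{D^k f}_{L^2}^2$. Expanding the square yields
\[
A^2 \;=\; \norm{D^k\hat{f}_{\gamma_n}}_{L^2}^2 - \norm{D^k f}_{L^2}^2 - 2\inner{D^k f}{D^k(\hat{f}_{\gamma_n}-f)}_{L^2} \;\le\; -2\inner{D^k f}{D^k(\hat{f}_{\gamma_n}-f)}_{L^2}.
\]
Here I would invoke the approximate source condition: for any $t \ge 0$, pick $\omega$ with $\norm{\omega}_{\mathcal{B}^*} \le t$ realizing (up to an arbitrarily small error) the minimum in $d_n(t)$, and write $f = S_n^*\omega + r$ with $\norm{D^k r}_{L^2} \le d_n(t)$. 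The source part is handled by adjointness together with step (i): $-2\inner{D^k S_n^*\omega}{D^k(\hat{f}_{\gamma_n}-f)}_{L^2} = -2\inner{\omega}{S_n(\hat{f}_{\gamma_n}-f)} \le 2\norm{\omega}_{\mathcal{B}^*}\norm{S_n(\hat{f}_{\gamma_n}-f)}_{\mathcal{B}} \le 4t\gamma_n$. The remainder part is controlled by Cauchy--Schwarz: $-2\inner{D^k r}{D^k(\hat{f}_{\gamma_n}-f)}_{L^2} \le 2d_n(t)A$. Combining, $A^2 \le 2 d_n(t)A + 4t\gamma_n$, a quadratic inequality whose solution gives $A \le 2(d_n(t) + \sqrt{t\gamma_n})$; minimizing over $t$ yields $A \le 2c_n$.

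With both bounds in hand, substituting $\norm{S_n(\hat{f}_{\gamma_n}-f)}_{\mathcal{B}} \le 2\gamma_n$ and $\norm{D^k(\hat{f}_{\gamma_n}-f)}_{L^2} \le 2c_n$ into Proposition~\ref{prop:interpolation_regression} applied to the (continuous, by the Sobolev embedding $k>d/2$) function $\hat{f}_{\gamma_n}-f \in H^k_0(\T^d)$ directly produces the three-term maximum on the right-hand side of~\eqref{eq:constrained_est}, with constants adjusted by the factor coming from the $2$'s raised to powers $2\vartheta$ and $1-2\vartheta$. The main obstacle is step (ii): the source condition is only satisfied approximately, and one must carefully balance the exact and approximate parts, which is precisely what the quadratic-in-$A$ argument does, and it is this balancing that produces the distance-function quantity $c_n$ as the natural smoothness index for MIND.
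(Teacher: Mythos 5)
Your proposal is correct and follows essentially the same route as the paper's proof: admissibility of $f$ plus minimality of $\hat{f}_{\gamma_n}$ give $\norm{D^k\hat{f}_{\gamma_n}}_{L^2}\le\norm{D^kf}_{L^2}$ and $\norm{S_n(\hat{f}_{\gamma_n}-f)}_{\mathcal{B}}\le 2\gamma_n$, the approximate source decomposition $f=S_n^*\omega+r$ combined with the duality pairing yields the quadratic inequality $A^2\le 2d_n(t)A+4t\gamma_n$ and hence $A\le 2c_n$, and the interpolation inequality of Proposition~\ref{prop:interpolation_regression} finishes the argument. The only cosmetic difference is that the paper keeps the minimum over $t$ and $\omega$ inside the inner-product estimate rather than introducing the remainder $r$ explicitly.
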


\begin{proof}
See Appendix~\ref{app:constrainedrate}.
\end{proof}

As a direct consequence of the previous result and the fact that
the multiresolution norm of independent sub-Gaussian noise with high
probability increases at most logarithmically, we obtain an asymptotic
convergence rate almost surely for MIND with properly chosen $\gamma_n$, {i.e.}
\begin{equation}\label{eq:choice:gammaN}
\gamma_n = C {(\log n)}^{r}\quad \text{ for some } r \ge\frac{1}{2} \text{ and } C > 
\begin{cases} 
0 & \text{ if } r > \frac{1}{2},\\
\sigma\sqrt{5+\frac{2k}{d}} & \text{ if } r = \frac{1}{2}.
\end{cases}
\end{equation}
We emphasize that such choice of $\gamma_n$ is universal, in the sense that it is 
independent of the smoothness of the truth $f$, and 
the system of cubes $\mathcal{B}$. In particular, when $r > 1/2$, $\gamma_n$ depends  
on $n$ only.

\begin{corollary}\label{cor:constrainedrate}
  Assume that $\mathcal{B}$ is normal, {that $\gamma_n$ is
    chosen as in~\eqref{eq:choice:gammaN}}, and that
  \[
  \min_{t \ge 0} \bigl(
  d_n(t)+(\log n)^{r/2} t^{1/2}\bigr)
  = \BigO{n^{-\mu}}
  \]
  for some $0 \le \mu < 1/2$.
  Then there exists a constant $C$ such that the MIND estimator $\hat{f}_{\gamma_n}$ satisfies the estimate
  \begin{equation}\label{eq:constrained_est_rate}
  \mathop{\lim\sup}_{n\to \infty} \left(n^{\mu(1-2\vartheta)+\vartheta} {(\log n)}^{-2r\vartheta} \norm{\hat{f}_{\gamma_n}-f}_{L^q} \right)\le C \quad a.s.
  \end{equation}
    with $\vartheta$ given in~\eqref{define_vartheta}.
\end{corollary}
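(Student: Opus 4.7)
The plan is to reduce to the deterministic estimate of Theorem~\ref{th:constrainedrate}. This requires two ingredients: (i) showing that the noise satisfies $\|\xi_n\|_{\mathcal{B}} \le \gamma_n$ for all sufficiently large $n$ almost surely, so that the hypothesis of that theorem holds eventually; and (ii) identifying which of the three candidates in the maximum on the right-hand side of~\eqref{eq:constrained_est} dominates under the given approximate source condition.

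For (i), I would apply the Borel--Cantelli lemma to the tail bound in Proposition~\ref{prop:mulitresolution_probability}. With $\gamma_n = C(\log n)^r$ as in~\eqref{eq:choice:gammaN},
\[
\Prob{\|\xi_n\|_{\mathcal{B}} > \gamma_n} \le 2n^2 e^{-C^2(\log n)^{2r}/(2\sigma^2)}.
\]
For $r>1/2$ the right-hand side decays faster than any polynomial in $n$, so summability is trivial. For $r=1/2$ the bound equals $2n^{2 - C^2/(2\sigma^2)}$; the hypothesis $C > \sigma\sqrt{5+2k/d}$ together with $k > d/2$ (hence $2k/d > 1$, so $5 + 2k/d > 6$) gives $C^2/(2\sigma^2) > 3$, which again yields summability. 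Borel--Cantelli then produces a random $n_1 \in \N$ such that, almost surely, $\|\xi_n\|_{\mathcal{B}} \le \gamma_n$ for every $n \ge n_1$.

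On this event Theorem~\ref{th:constrainedrate} supplies the deterministic estimate~\eqref{eq:constrained_est}. To bound $c_n$, note that $(\gamma_n t)^{1/2} = \sqrt{C}\,(\log n)^{r/2}\,t^{1/2}$, so
\[
c_n \le \max\{1,\sqrt{C}\}\,\min_{t\ge 0}\bigl(d_n(t) + (\log n)^{r/2} t^{1/2}\bigr) = \BigO{n^{-\mu}}
\]
by hypothesis.

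For (ii), substituting $\gamma_n = \BigO{(\log n)^r}$ and $c_n = \BigO{n^{-\mu}}$ into~\eqref{eq:constrained_est} gives
\begin{align*}
\frac{\gamma_n^{2\vartheta} c_n^{1-2\vartheta}}{n^\vartheta} &= \BigO{(\log n)^{2r\vartheta}\, n^{-[\mu(1-2\vartheta)+\vartheta]}},\\
\frac{\gamma_n}{n^{1/2}} &= \BigO{(\log n)^{r}\, n^{-1/2}},\\
\frac{c_n}{n^{\vartheta'}} &= \BigO{n^{-(\mu+\vartheta')}}.
\end{align*}
Since $\mu < 1/2$, the exponent $\mu(1-2\vartheta)+\vartheta$ is strictly less than $1/2$, so the first term dominates the second. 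The comparison with the third reduces to
\[
(\mu+\vartheta') - [\mu(1-2\vartheta) + \vartheta] = \vartheta\bigl(2k/d + 2\mu - 1\bigr) \ge 0,
\]
which holds because $\mu \ge 0$ and $2k/d > 1$. Hence the maximum is of order $(\log n)^{2r\vartheta}\, n^{-\mu(1-2\vartheta)-\vartheta}$, and~\eqref{eq:constrained_est_rate} follows.

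The only delicate point is the Borel--Cantelli constant in the boundary case $r = 1/2$; this is precisely what the strengthened lower bound on $C$ in~\eqref{eq:choice:gammaN} is designed to ensure. The remaining steps are algebraic manipulations of the exponents produced by Theorem~\ref{th:constrainedrate}.
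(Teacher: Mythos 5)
Your proof is correct and follows the same route as the paper: invoke the tail bound of Proposition~\ref{prop:mulitresolution_probability} to control the event $\norm{\xi_n}_{\mathcal{B}}>\gamma_n$, then apply the deterministic estimate of Theorem~\ref{th:constrainedrate} and check that the first term in the maximum dominates. In fact your version is more careful than the paper's own two-line argument: the paper only observes that $\Prob{\norm{\xi_n}_{\mathcal{B}}>\gamma_n}\to 0$, which by itself gives convergence in probability, whereas the almost sure $\limsup$ statement~\eqref{eq:constrained_est_rate} really requires the Borel--Cantelli step you carry out, and this is exactly where the sharpened constant $C>\sigma\sqrt{5+2k/d}$ in the boundary case $r=1/2$ is needed (it forces the tail bound $2n^{2-C^2/(2\sigma^2)}$ to be summable). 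Your exponent comparisons, using $\vartheta<1/2$, $\mu<1/2$ and $\vartheta'-\vartheta=\vartheta(2k/d-1)\ge 0$, are also the ones implicitly used in the paper; no gaps.
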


\begin{proof}
  With the given choice of $\gamma_n$,
  Proposition~\ref{prop:mulitresolution_probability} implies that
  \[
  \Prob{\norm{\xi_n}_{\mathcal{B}} > \gamma_n} \to 0
  \qquad\qquad\text{ as } n \to \infty.
  \]
  As a consequence, the probability that the estimate
  in Theorem~\ref{th:constrainedrate} applies tends to 1 as $n\to \infty$.
  Noting that, for $n$ sufficiently large and $0 \le \mu < 1/2$,
  the first term on the right hand side of~\eqref{eq:constrained_est}
  is always dominant, we obtain~\eqref{eq:constrained_est_rate}.
\end{proof}

{Moreover, we obtain under the same assumptions also the same convergence rate
in expectation. The proof of this result, however, is more involved, because
it requires an estimate for the error $\norm{\hat{f}_{\gamma_n}-f}_{L^q}$ in the
high noise case $\norm{\xi_n}_{\mathcal{B}} > \gamma_n$, in which case the
estimate from Theorem~\ref{th:constrainedrate} does not apply.
Thus it is relegated to the appendix.}

\begin{theorem}\label{th:rate_expect}
{Assume the setting of Theorem~\ref{th:constrainedrate} and Corollary~\ref{cor:constrainedrate}. }
  Then the MIND estimator $\hat{f}_{\gamma_n}$ satisfies 
  \begin{equation}\label{eq:expectedrate}
  \E{\norm{\hat{f}_{\gamma_n}-f }_{L^q}} = \mathcal{O}\left(n^{-\mu(1-2\vartheta)-\vartheta} {(\log n)}^{2r\vartheta}\right)
  \end{equation}
  as $n \to \infty$, with $\vartheta$ given in~\eqref{define_vartheta}.
\end{theorem}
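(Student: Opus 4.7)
The plan is to decompose the expectation according to whether the noise is ``typical'' or not:
$$\E{\norm{\hat f_{\gamma_n}-f}_{L^q}} = \E{\norm{\hat f_{\gamma_n}-f}_{L^q}\mathbf{1}_{E_n}} + \E{\norm{\hat f_{\gamma_n}-f}_{L^q}\mathbf{1}_{E_n^c}},$$
where $E_n := \{\norm{\xi_n}_{\mathcal{B}} \le \gamma_n\}$. On the good event $E_n$ the bound from Theorem~\ref{th:constrainedrate} applies pathwise; combined with the assumption $\min_{t\ge 0}(d_n(t)+(\log n)^{r/2}t^{1/2}) = \BigO{n^{-\mu}}$ and the choice of $\gamma_n$ in~\eqref{eq:choice:gammaN}, the first summand is exactly of the target order $\BigO{n^{-\mu(1-2\vartheta)-\vartheta}(\log n)^{2r\vartheta}}$, just as in the proof of Corollary~\ref{cor:constrainedrate}.

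The tail term on $E_n^c$ is the delicate part, since $f$ may not be feasible and Theorem~\ref{th:constrainedrate} does not apply. My plan is to apply the interpolation inequality of Proposition~\ref{prop:interpolation_regression} directly to $\hat f_{\gamma_n}-f$. Feasibility of $\hat f_{\gamma_n}$ and the triangle inequality already give, on $E_n^c$,
$$\norm{S_n(\hat f_{\gamma_n}-f)}_{\mathcal{B}} \le \gamma_n + \norm{\xi_n}_{\mathcal{B}} \le 2\norm{\xi_n}_{\mathcal{B}},$$
so the only missing ingredient is an a priori estimate of $\norm{D^k\hat f_{\gamma_n}}_{L^2}$. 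I would obtain one by exhibiting an explicit feasible competitor $g$, for example the mean-zero trigonometric polynomial interpolant of $y_n$ of degree $N\sim n^{1/d}$. Then $S_n g = y_n$, so $g$ is feasible for every $\gamma_n\ge 0$, while a standard Bernstein/Parseval estimate yields $\norm{D^k g}_{L^2} \lesssim n^{k/d}(\norm{f}_\infty + n^{-1/2}\norm{\xi_n}_{\ell^2})$. Since $\hat f_{\gamma_n}$ minimises $\tfrac12\norm{D^k\cdot}_{L^2}^2$ over the feasible set, $\norm{D^k\hat f_{\gamma_n}}_{L^2}\le\norm{D^k g}_{L^2}$, and the same polynomial-in-$n$ growth is inherited.

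Inserting these two bounds into Proposition~\ref{prop:interpolation_regression} produces, on $E_n^c$, a schematic pointwise estimate
$$\norm{\hat f_{\gamma_n}-f}_{L^q} \le C\,n^{\vartheta}\,P\bigl(\norm{\xi_n}_{\mathcal{B}},\, n^{-1/2}\norm{\xi_n}_{\ell^2}\bigr),$$
with $P$ a fixed polynomial depending only on $k$, $d$, $\norm{f}_\infty$ and $\sigma$. A Cauchy--Schwarz step combined with Proposition~\ref{prop:mulitresolution_probability} (which controls both the polylogarithmic moments of $\norm{\xi_n}_{\mathcal{B}}$ and the sub-Gaussian tail $\Prob{E_n^c}\le 2n^{2-\gamma_n^2/(2\sigma^2)}$) then bounds $\E{\norm{\hat f_{\gamma_n}-f}_{L^q}\mathbf{1}_{E_n^c}}$ by a quantity of strictly smaller order than the main rate, for any admissible $\gamma_n$ satisfying~\eqref{eq:choice:gammaN}.

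The main obstacle is the construction and control of the feasible competitor: because $\hat f_{\gamma_n}$ is defined only implicitly through a minimisation, the only way I see to bound $\norm{D^k\hat f_{\gamma_n}}_{L^2}$ is by producing an explicit feasible element, and the natural interpolation construction yields a polynomial-in-$n$ Sobolev norm. Balancing this polynomial growth against the polynomial tail decay of $\Prob{E_n^c}$ is precisely what dictates the lower bound on the constant $C$ in the logarithmic regime $r=\tfrac12$; once $r>\tfrac12$ the tail decays faster than any polynomial and the balancing is automatic.
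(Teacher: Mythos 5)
Your decomposition and, crucially, your way out of the implicit definition of $\hat f_{\gamma_n}$ on the bad event --- exhibit an explicit interpolant of the data $y_n$, which is feasible for every $\gamma_n\ge 0$, and invoke minimality of $\norm{D^k\cdot}_{L^2}$ over the feasible set --- is exactly the paper's argument. The paper realises the competitor as a sum of disjointly supported rescaled bumps $h_{n,x}$ with $h(0)=1$ and $\int h=0$ (rather than a trigonometric interpolant), obtaining $\norm{D^k\hat f_{\gamma_n}}_{L^2}\le n^{(2k+d)/(2d)}\bigl(\norm{f}_{L^\infty}+\norm{\xi_n}_{\mathcal{B}}\bigr)\norm{D^kh}_{L^2}$, and then passes to the $L^q$-loss via Sobolev embedding plus the Poincar\'e inequality instead of via Proposition~\ref{prop:interpolation_regression}; both variants deliver the required polynomial-in-$n$ pointwise bound. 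Two caveats on your version. First, a trigonometric polynomial cannot simultaneously reproduce $y_n$ on $\Gamma_n$ and have zero mean unless $\bar y_n=0$; you must either retain the constant mode and check that the resulting offset is compatible with the mean-zero setting and the constraint, or copy the paper's trick of a mean-zero kernel with $h(0)=1$, which sidesteps the issue entirely. Second, Cauchy--Schwarz replaces $\Prob{E_n^c}$ by $\Prob{E_n^c}^{1/2}$ and therefore halves the exponent $\gamma_n^2/(2\sigma^2)$ in the tail bound; in the borderline regime $r=1/2$ the threshold $\sigma\sqrt{5+2k/d}$ in~\eqref{eq:choice:gammaN} is calibrated precisely against the \emph{un-halved} exponent (it is what makes $n^{(2k+5d)/(2d)}\gamma_n e^{-\gamma_n^2/(2\sigma^2)}\to 0$), so with your step the tail need not be negligible for all admissible constants. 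You should instead integrate the tail directly against $dp(t)=d\,\Prob{\norm{\xi_n}_{\mathcal{B}}\le t}$ as the paper does (or use H\"older with exponents close to $(1,\infty)$); for $r>1/2$ the exponential factor beats any polynomial and your argument goes through as written.
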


\begin{proof}
  See Appendix~\ref{app:expectedrate}.
\end{proof}

\begin{remark}\label{rem:natural:bnd:dist:fun}
  We note that the inequality
  \[
  c_n = \min_{t \ge 0} \bigl(d_n(t)+(\log n)^{r/2} t^{1/2}\bigr) \le d_n(0) = \norm{D^k f}_{L^2}
  \]
  always holds. Under the assumption that 
  $$
  f \in W^{k,p}_0(\T^d) \qquad \text{ for some } p \in [2,  \infty],
  $$
  we therefore always obtain with the parameter choice given
  in Corollary~\ref{cor:constrainedrate} and Theorem~\ref{th:rate_expect},
  respectively, for MIND 
  \[
  \norm{\hat{f}_{\gamma_n}-f }_{L^q} = \mathcal{O}\left(n^{-\vartheta} (\log n)^{2r\vartheta}\right),
  \]
  almost surely and in expectation. 
\end{remark}

We conclude this section by emphasizing that the introduced multiscale distance functions $d_n(t)$ transform the convergence analysis of MIND into the study of approximation property of the bases $\bigl(\varphi_x\bigr)_{x \in \Gamma_n}$, or the frames $\bigl(\varphi_B\bigr)_{B \in \mathcal{B}}$, with the size of coefficients controlled in certain sense, see~\eqref{eq:dn:basis:version} and \eqref{eq:dn:frame:version}. In one dimension, we are able to derive sharp error bounds for such approximation problem {(see Section~\ref{sec:conRate:1d})}, using the well-developed theory of B-splines, see the next section. However, in higher dimensions, this approximation problem~\eqref{eq:dn:basis:version} or \eqref{eq:dn:frame:version} remains still open. Note that there exist some results on the approximation error of bases $\bigl(\varphi_x\bigr)_{x \in \Gamma_n}$~\citep[see][]{DNW99,NSW02,NWW03}, but we are not aware of any results about the size of the coefficients.

\section{Convergence Rates for $d=1$}\label{sec:conRate:1d}

As a first step, we show that the range of the
adjoint $S_n^*$ of the sampling operator consists basically of
splines. Moreover, it is possible to obtain estimates for the dual
multiresolution norm of splines provided that the system of intervals
on which the multiresolution norm is based is sufficiently
regular. The desired approximate source conditions follow then from
approximation results for splines.
In the following, we will introduce the necessary notation and state
our main theorems, while the major proofs are, again,
postponed to the appendix.

\subsection{Notation}\label{subsec:basics}

We denote the forward and backward differences of a function 
{$f \colon \T\to\R$} by
\[
D_{h,+} f(\cdot) = f(\cdot+h) - f(\cdot), \text{ and } D_{h,-} f(\cdot) = f(\cdot) - f(\cdot-h) \text{ with }h \in\R,
\]
and that of a sequence {$\{a_i\}_{0\le i \le n-1}$} by
\[
{(D_+ a)_i} = a_{i+1} - a_{i}, \text{ and } {(D_{-} a)_i} = a_{i} - a_{i-1}.  
\]
{Here we define $(D_+ a)_{n-1} = a_0 - a_{n-1}$ and $(D_- a)_0 = a_0 - a_{n-1}$.}
We note that {the adjoints of these mappings}
are given, respectively, by
\[
D_{h,+} ^* = - D_{h,-} \text{ and } D_+^* = - D_{-}.
\]

In the following, we will give a brief introduction to Besov spaces, 
{as it is required here,}
 and refer the interested readers to \citep{Triebel1983,Triebel1992,Triebel1995,AF03} for further details.   
First we define the $r$-th \emph{modulus of smoothness} of $f$ in $L^p(\T)$, $1\le p \le \infty$, as 
\[
\varpi_r(f;t)_p := \sup_{0 \le \abs{h} \le t}\norm{D^r_{h,+} f}_{L^p}.
\]
Based on it, we define the \emph{Besov norm} $\norm{\cdot}_{B^{s,p'}_{p,0}}$, with $s > 0, 1 \le p, p' \le \infty$, as
\[
\norm{f}_{B^{s,p'}_{p,0}} :=  \norm{f}_{L^p} + \abs{f}_{s,p,p',r}, 
\]
where $s < r$, $r \in \N$,  and 
\[
\abs{f}_{s,p,p',r}:=\begin{cases}
\left(\int_{\T} \left(t^{-s}\varpi_r(f;t)_p\right)^{p'}\frac{dt}{t}\right)^{1/p'} & \text{ if } 1\le p' < \infty\\
\esssup_{t > 0} t^{-s} \varpi_r(f;t)_p & \text{ if } p' = \infty. 
\end{cases}
\]
The \emph{Besov space} $B^{s,p'}_{p,0}(\T)$ is then defined as the Banach space consisting of functions with bounded Besov norm, that is, 
\[
B^{s,p'}_{p,0}(\T) := \{f \in L^p_{0}(T); \norm{f}_{B^{s,p'}_{p,0}} < \infty\}. 
\]
An equivalent definition of Besov spaces is based on interpolation
theory of Banach spaces, for instance using the K-method, see e.g.~\citep{Triebel1995}.
Note that the Sobolev space $W^{s,p}_{0}(\T^d) $ equals the Besov space $ B^{s, p}_{p, 0}(\T^d)$ for every non-integer $s\in(0,\infty)$, but $W^{k,p}_{0}(\T^d) \neq B^{k, p}_{p, 0}(\T^d)$ for $k\in \N$.

Given $m\in\N$, denote by $\mathcal{P}_m$ the space of polynomials of order $m$ (i.e. of degree $\le m-1$), that is, 
\[
\mathcal{P}_m := \Bigl\{\sum_{i=1}^m a_i x^{i-1} : a_i \in \R, \, i = 1,\ldots,m\Bigr\}.
\]
Now assume that $\Gamma \subset \T$ is a discrete subset.
The space of piecewise polynomials of order $m$ on $\T$ with knots in $\Gamma$ is
defined by 
\begin{multline*}
\mathcal{PP}_m(\Gamma;\T) := \Bigl\{p\colon\T\to\R :
\text{for all } (x,y) \subset \T\setminus\Gamma  \\
\text{ there exists } q \in \mathcal{P}_m \text{ s.t. }
p(t) = q(t) \text{ for all } t \in (x,y)\Bigr\}.
\end{multline*}
Then we define the space of $m$-order splines on $\T$ with simple knots in $\Gamma$ as 
\[
\mathcal{S}_m(\Gamma;\T):=\mathcal{PP}_m(\Gamma;\T)\cap \mathcal{C}^{m-2}(\T).
\] 
Let $Q^m_{0} \in \mathcal{S}_m(\Gamma_n;\T)$ be given by
\[
Q^m_{0}(x) := \frac{n^{m-1}}{(m-1)!} \sum^m_{i=0}
(-1)^i{{m}\choose{i}}\Bigl(x-\frac{i}{n}\Bigr)_{+}^{m-1}\quad \text{for } x \in
[0,1),
\]
where $(x)_+:=\max\{x,0\}$. 
Then $\set{Q^m_i(x):=Q^m_0(x-i/n)}{i=0,\ldots,n-1}$ forms a basis of
$\mathcal{S}_m(\Gamma_n;\T)$, which is called the basis of \emph{normalized
  B-splines}. More details can be found
in~\citep{wahba1990spline,Schumaker2007} for example.

\subsection{Dual operator and splines}

For each $i  = 0,1,\ldots,n-1$, we denote by $\varphi_{i,n}$
the unique weak solution of the differential equation
\begin{equation}\label{eq:Green_fun}
(-1)^k\varphi^{(2k)}_{i,n}  = \delta\Bigl(\frac{i}{n} - \cdot\Bigr) -1, \,
\varphi_{i,n}  \in H^k_{0}(\T).
\end{equation}
{As demonstrated in Section~\ref{ss:distance},}
it follows that $S^*_n e_{i/n} = \varphi_{i,n}$.
We will show in the following that {the} span of the functions $\varphi_{i,n}$
in particular contains the space of all splines of order $2k$
on $\Gamma_n$ with zero mean.

To that end,
let us first define $\chi_{n} \in L^2(\T)$ by
\[
\chi_n(y):=\begin{cases}
1, \text{ if } y \in [0,\frac{1}{n}), \\
0, \text{ if } y \in [\frac{1}{n},1).
\end{cases}
\]
By integrating both sides of~\eqref{eq:Green_fun}
and respecting the zero mean we obtain
\[
(-1)^{k-1}\varphi^{(2k-1)}_{i,n}(z) = \begin{cases}
z - \frac{i}{n}+\frac{1}{2}, &\text{ if } 0 \le z < \frac{i}{n}, \\
z - \frac{i}{n}-\frac{1}{2}, &\text{ if } \frac{i}{n} \le z < 1.
\end{cases}
\]
Therefore
\[
(-1)^{k}D_{\frac{1}{n},-}\varphi_{i,n}^{(2k-1)}(z) = \chi_n\Bigl(z-\frac{i}{n}\Bigr)-\frac{1}{n}.
\]
Repeating this procedure $m$ times (with $m\le 2k$), we see that
\[
(-1)^k D^{m}_{\frac{1}{n},-}\varphi^{(2k-m)}_{i,n}(z) = (\chi_n*^{m-1}\chi_n)\Bigl(z-\frac{i}{n}\Bigr)-\frac{1}{n^m}.
\]
As a consequence, it follows that
\begin{multline}\label{eq:def:psi}
\psi_{i,n}^m(z):=(-1)^k n^{m-1} D^{m}_{\frac{1}{n},-}\varphi^{(2k-m)}_{i,n}(z) 
= n^{m-1}(\chi_n*^{m-1}\chi_n)(z-\frac{i}{n}) - \frac{1}{n} 
= Q^m_i(z)-\frac{1}{n}
\end{multline}
is the $L^2$-projection of the normalized B-spline $Q^m_i$ onto $L^2_{0}(\T)$. 
We do note here that the functions $\psi_{i,n}^m$ are not linearly independent,
their sum being zero.

Now assume that
\[
h = \sum_{i=0}^{n-1} \tilde{c}_i \psi_{i,n}^m
\]
for some coefficients $\tilde{c}_i \in \R$.
Noting that
\[
D_{1/n,-} \varphi_{i,n}^{(k)}(z) = \varphi_{i,n}^{(k)}(z)-\varphi_{i,n}^{(k)}\Bigl(z-\frac{1}{n}\Bigr)
= \varphi_{i,n}^{(k)}(z) - \varphi_{i+1,n}^{(k)}(z),
\]
we see that
\begin{align*}
h = &(-1)^k n^{m-1} \sum_{i=0}^{n-1} \tilde{c}_i D_{\frac{1}{n},-}^m \varphi_{i,n}^{(2k-m)} \\
= &(-1)^k n^{m-1} \sum_{i=0}^{n-1} \tilde{c}_i (D_{\frac{1}{n},-}^{m-1}\varphi_{i,n}^{(2k-m)}-D_{\frac{1}{n},-}^{m-1}\varphi_{i+1,n}^{(2k-m)})\\
= &(-1)^k n^{m-1} \sum_{i=0}^{n-1} (D_{-}\tilde{c})_i D_{\frac{1}{n},-}^{m-1}\varphi_{i,n}^{(2k-m)}.
\end{align*}
Repeating this argumentation $m$ times, we obtain
\[
h = (-1)^k n^{m-1} \sum_{i=0}^{n-1} \tilde{c}_i D_{\frac{1}{n},-}^m \varphi_{i,n}^{(2k-m)}
= (-1)^k n^{m-1} \sum_{i=0}^{n-1} (D_-^m \tilde{c})_i \varphi_{i,n}^{(2k-m)}.
\]
This shows that, indeed, the span of the functions $\psi_{i,n}^m$ is contained
in the span of the functions $\varphi_{i,n}^{(2k-m)}$, and that the change
of coefficients with respect to the different spanning sets is given
by the linear mapping $\tilde{c}\mapsto (-1)^k n^{m-1} D_-^m\tilde{c}$.

\begin{remark}
It is still interesting to know that 
$$
\varphi_{i,n}(x) \equiv (-1)^{k-1}B_{2k}(x - \frac{i}{n}),
$$
with $B_{2k}$ the Bernoulli 
polynomial~\citep[see e.g.][Section 9.4]{Kress98},
\[
B_{2k}(x) := 2(-1)^{k-1}\sum_{l = 1}^{\infty} \frac{\cos (2\pi l x)}{(2 \pi l)^{2k}},
\]
although this fact is not needed in this paper.
One can easily see this by means of Fourier series. 
\end{remark}

\subsection{Convergence Rates}
We now derive the main results of this paper,
where we prove convergence rates in the one-dimensional case
for $f$ contained in various Sobolev and Besov spaces.

{Our first main result in the one-dimensional setting is concerned
with the high regularity situation, where the function $f$ actually
is of higher smoothness than assumed by the regularization term
$\norm{D^k \hat{f}}_{L^2}^2$. In this case, it turns out that indeed
a higher order convergence rate is obtained than the one discussed
in Remark~\ref{rem:natural:bnd:dist:fun}. {For this to hold, however, we have to assume that 
the system of intervals $\mathcal{B}$ is regular (see Definition~\ref{def:m:regular}), which implies its normality. }
 The proof of this result, mainly postponed to the appendix, relies on estimates for the {multiscale
distance function} $d_n$, which in turn follow from various approximation
results with splines.}

\begin{proposition}\label{pr:distance_1d}
  Assume that $d = 1$, $r \ge 1/2$, that $\mathcal{B}$ is regular, and that
  \[
  f \in B_{p,0}^{k+s,p'}(\T)
  \]
  for some $1 \le s \le k$ and $1 \le p,p' \le \infty$.
  Then
  \[
  \min_{t \ge 0} \bigl(
  d_n(t)+(\log n)^{r/2}t^{1/2}\bigr)
  = \BigO{n^{-\mu}(\log n)^{2r\mu}}
  \]
  with
  \begin{equation}\label{eq:defMU}
    \mu =  \frac{s - \Bigl(\frac{1}{p}-\frac{1}{2}\Bigr)_+}{2s+2k+1 -2 \Bigl(\frac{1}{p}-\frac{1}{2}\Bigr)_+}.
  \end{equation}
  {The same result holds for $f \in W^{k+s,p}_{0}(\T)$ with $1 \le s \le k$
  and $1 \le p \le \infty$.}

\end{proposition}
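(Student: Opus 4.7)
The proof exploits the frame representation~\eqref{eq:dn:frame:version}, reducing the estimate on $d_n(t)$ to an approximation problem: find coefficients $(c_B)_{B \in \mathcal{B}}$ such that $\sum_B c_B \varphi_B$ approximates $f$ in $H^k_0$ with small $\sum_B |c_B|\sqrt{n(B)}$. The assumption that $\mathcal{B}$ is regular (hence contains an $m$-partition system) is what gives a multi-scale hierarchy of cubes at all scales $m^{-j}$, which we exploit below; normality alone is not enough for this step.

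For an intermediate scale $N \le n$, to be chosen at the end, consider a quasi-interpolant spline approximation $g_N \in \mathcal{S}_{2k}(\Gamma_N;\T) \cap L^2_0(\T)$ of $f$ at scale $N$. By the Besov embedding $B^{k+s,p'}_{p,0}(\T) \hookrightarrow H^{k+\sigma}_0(\T)$ with $\sigma := s - (1/p - 1/2)_+$, combined with classical Jackson-type approximation estimates for splines of order $\ge k+s+1$ (which is available because $s \le k$), we have
\[
\|f - g_N\|_{H^k_0} \le C\, N^{-\sigma}\, \|f\|_{B^{k+s,p'}_{p,0}}.
\]
Assuming $N \mid n$ so that $\Gamma_N \subseteq \Gamma_n$, the B-spline calculus of Section~\ref{subsec:basics} yields $g_N = S_n^* \omega_N$ for an $\omega_N \in \R^{\Gamma_n}$ (supported on $\Gamma_N$), with the explicit representation $\omega_{N,j} = (-1)^k n^{2k-1} (D_-^{2k} \tilde{c})_j$ in terms of the fine B-spline coefficients $\tilde{c}$ of $g_N$. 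The central technical step is then to bound $\|\omega_N\|_{\mathcal{B}^*}$. A naive singleton decomposition only gives $\|\omega_N\|_{\mathcal{B}^*} \le \sum_{x \in \Gamma_N} |\omega_{N,x}|$, and Bernstein-type estimates for splines (applied to bound the jumps of $g_N^{(2k-1)}$ via the Besov regularity of $f$) show this is $\lesssim N^{k-\sigma}\|f\|_{B^{k+s,p'}_{p,0}}$, which is $n$-independent and therefore insufficient. Using the $m$-adic cubes in $\mathcal{B}$ at the intermediate scale $\asymp 1/N$ (with $n(B) \asymp n/N$) to redistribute the concentrated coefficients of $\omega_N$, we establish the refined bound
\[
\|\omega_N\|_{\mathcal{B}^*} \le C\, \frac{N^{k + 1/2 - \sigma}}{\sqrt{n}}\, \|f\|_{B^{k+s, p'}_{p, 0}},
\]
gaining the decisive factor $\sqrt{N/n}$ over the singleton bound.

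Setting $t = \|\omega_N\|_{\mathcal{B}^*}$, we obtain $d_n(t) + (\log n)^{r/2} t^{1/2} \le C\,N^{-\sigma} + C\,(\log n)^{r/2} N^{(k+1/2-\sigma)/2}/n^{1/4}$. Optimizing over $N$ (with the balancing choice $N \asymp n^{1/(2\sigma + 2k + 1)}(\log n)^{-2r/(2\sigma + 2k + 1)}$) yields the claimed bound $\BigO{n^{-\mu}(\log n)^{2r\mu}}$ with $\mu = \sigma/(2\sigma + 2k + 1)$, matching~\eqref{eq:defMU}. The Sobolev case $f \in W^{k+s,p}_0(\T)$ follows identically via the Sobolev--Besov embedding $W^{k+s,p}_0 \hookrightarrow B^{k+s,\max(p,2)}_{p,0}$. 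The main obstacle is the $\sqrt{N/n}$-saving in the multi-scale dual norm estimate: it is here that both the one-dimensional B-spline calculus (to describe $\omega_N$ explicitly in terms of B-spline coefficients) and the regularity of $\mathcal{B}$ (to provide matching cubes at scale $\asymp 1/N$) play essential roles.
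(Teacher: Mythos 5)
Your overall architecture (coarse spline at scale $N$, bound the dual multiresolution norm of its source coefficients, balance $N$ against $n$) mirrors the paper's, but the central technical step fails for the specific construction you chose. You represent the coarse spline \emph{exactly} as $g_N = S_n^*\omega_N$ with $\omega_{N,j}=(-1)^k n^{2k-1}(D_-^{2k}\tilde c)_j$; since $g_N$ is a polynomial of degree $2k-1$ between the knots of $\Gamma_N$, this $\omega_N$ is \emph{sparse}: it is supported on $\mathcal{O}(N)$ clusters of $\mathcal{O}(k)$ fine grid points around the coarse knots and vanishes elsewhere. For such a spiky vector the "redistribution over $m$-adic cubes at scale $1/N$" cannot produce the factor $\sqrt{N/n}$ you claim. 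The representation in~\eqref{eq:dual_multiresolution_norm} is an \emph{exact pointwise} constraint $\omega(x)=\sum_{B\ni x}c_B$: a cube $B$ with $n(B)\asymp n/N$ assigns the same value $c_B$ to every grid point it contains, so spreading a spike over a large cube corrupts $\omega$ at all the zero entries of $B$ and forces cancelling corrections whose cost exceeds what you save. One can make the failure quantitative: testing $\omega_N$ against $v=\operatorname{sgn}(\omega_N)\mathbf{1}_{\supp\omega_N}$, which satisfies $\norm{v}_{\mathcal{B}}=\mathcal{O}(1)$ whenever $N\lesssim\sqrt{n}$ (each cube meets at most $j$ spikes while containing $\gtrsim jn/N$ grid points), gives $\norm{\omega_N}_{\mathcal{B}^*}\gtrsim\norm{\omega_N}_{\ell^1}\asymp N^{k-\sigma}$. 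Since the optimal $N\asymp n^{1/(2\sigma+2k+1)}\ll\sqrt{n}$, your claimed bound $N^{k-\sigma}\sqrt{N/n}$ is strictly below this lower bound, so the step is not merely unproven but false.

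The paper avoids this by arranging for the source coefficients to be \emph{piecewise constant} rather than sparse, which is the structure the $m$-adic covering argument (Lemmas~\ref{le:cover_help} and~\ref{le:cover}) actually exploits: the indicator of a run of $p$ consecutive grid points decomposes exactly into disjoint $m$-adic intervals with total cost $\mathcal{O}(\sqrt{p})$, no cancellation required. Concretely, the paper approximates $f^{(k)}$ (not $f$) by a coarse spline $g$ of order $k+1$, then approximates $g$ in $L^2$ by the best order-$k$ spline $h$ on the \emph{fine} grid, writing $h=(S_n^*c)^{(k)}$ with $c=(-1)^k n^{k-1}D_-^k\tilde c$; the best-approximation property forces $(D_-c)_j=0$ away from the coarse knots, so $c$ is constant on each of the $\abs{\Gamma}$ long stretches, and Lemma~\ref{le:cover} plus H\"older and the B-spline condition-number/projection bounds yield exactly the $\lambda^{1/2}n^{-1/2}$ saving you were after. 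The price is an extra approximation error $\norm{g-h}_{L^2}\lesssim\norm{g^{(k)}}_{L^2}n^{-k}$, which is harmless after balancing. If you want to salvage your route you must replace the exact representation of $g_N$ by such an approximate one with spread-out coefficients; as written, the proof does not go through. (Two smaller points: for $p<2$ and $p'>2$ your embedding $B^{k+s,p'}_{p,0}\hookrightarrow H^{k+\sigma}_0$ is not valid without an $\eps$-loss — the paper instead absorbs the $(1/p-1/2)$ penalty directly into the spline approximation rate with $q=2$; and $\omega_N$ is supported near, not on, $\Gamma_N$.)
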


\begin{proof}
  See Appendix~\ref{app:proof_dist_1d}.
\end{proof}

\begin{theorem}\label{th:rate_higherorder}
  Assume that $d = 1$, that $\mathcal{B}$ is regular, and that
  \[
  f \in B_{p,0}^{k+s,p'}(\T)
  \]
  for some $1 \le s \le k$ and $1 \le p,p' \le \infty$.
  Then the MIND estimator $\hat{f}_{\gamma_n}$ 
  {satisfies, with a parameter choice $\gamma_n$
    given by~\eqref{eq:choice:gammaN},}
   \[
  \norm{\hat{f}_{\gamma_n}-f }_{L^q} =
  \mathcal{O}\left(n^{-\mu(1-2\vartheta)-\vartheta} {(\log
      n)}^{2r\mu(1-2\vartheta)+2r\vartheta}\right)\qquad\text{ as } n \to \infty,
  \]
  a.s. and in expectation, with $\vartheta$  in~\eqref{define_vartheta} and $\mu$ in~\eqref{eq:defMU}.  
  The same result holds for $f \in W^{k+s,p}_{0}(\T)$ with $1 \le s \le k$
  and $1 \le p \le \infty$.
\end{theorem}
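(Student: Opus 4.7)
The plan is to feed the estimate on the multiscale distance function provided by Proposition~\ref{pr:distance_1d} into the deterministic bound of Theorem~\ref{th:constrainedrate}, and then lift the resulting estimate to the almost sure and in-expectation rates by following the strategies of Corollary~\ref{cor:constrainedrate} and Theorem~\ref{th:rate_expect}. In this sense the present theorem is essentially a compilation of earlier results; the genuine work lies in Proposition~\ref{pr:distance_1d}.

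First, under the regularity assumption on $\mathcal{B}$ and the smoothness assumption on $f$, Proposition~\ref{pr:distance_1d} gives
\[
\min_{t \ge 0}\bigl(d_n(t) + (\log n)^{r/2} t^{1/2}\bigr) = \BigO{n^{-\mu}(\log n)^{2r\mu}}
\]
with $\mu$ as in~\eqref{eq:defMU}. Plugging $\gamma_n = C(\log n)^r$ into the definition of $c_n$ and pulling the constant out of the square root, the same bound yields $c_n = \BigO{n^{-\mu}(\log n)^{2r\mu}}$.

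Second, I substitute this into the three terms on the right-hand side of~\eqref{eq:constrained_est}. A direct computation gives
\[
\frac{\gamma_n^{2\vartheta} c_n^{1-2\vartheta}}{n^\vartheta} = \BigO{n^{-\vartheta - \mu(1-2\vartheta)}(\log n)^{2r\vartheta + 2r\mu(1-2\vartheta)}},
\]
while $\gamma_n/n^{1/2} = \BigO{n^{-1/2}(\log n)^r}$ and $c_n/n^{\vartheta'} = \BigO{n^{-\mu-\vartheta'}(\log n)^{2r\mu}}$. Using $\mu < 1/2$, one checks that $\vartheta + \mu(1-2\vartheta) \le 1/2$, so the first term dominates the second. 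Using $\vartheta' = 2k\vartheta \ge \vartheta$ in one dimension (since $k \ge 1$) together with $1 - 2\mu \le 2k$, one similarly gets $\vartheta + \mu(1-2\vartheta) \le \mu + \vartheta'$, so the first term also dominates the third. Hence on the event $\{\norm{\xi_n}_{\mathcal{B}} \le \gamma_n\}$, Theorem~\ref{th:constrainedrate} yields the desired bound on $\norm{\hat{f}_{\gamma_n}-f}_{L^q}$.

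Finally, for the almost sure statement I take the constant $C$ in $\gamma_n = C(\log n)^r$ large enough so that Proposition~\ref{prop:mulitresolution_probability} delivers $\Prob{\norm{\xi_n}_{\mathcal{B}} > \gamma_n} \lesssim n^{-a}$ with $a > 1$; a Borel--Cantelli argument combined with the deterministic bound above then produces the claimed a.s. rate, exactly as in the proof of Corollary~\ref{cor:constrainedrate}. For the rate in expectation, the proof of Theorem~\ref{th:rate_expect} applies with no changes: it only requires the above bound on $c_n$ together with the sub-Gaussian control of $\norm{\xi_n}_{\mathcal{B}}$ to handle the complementary event $\{\norm{\xi_n}_{\mathcal{B}} > \gamma_n\}$. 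The Sobolev version $f \in W^{k+s,p}_0(\T)$ is handled identically, using the corresponding estimate stated at the end of Proposition~\ref{pr:distance_1d}. No further obstacle arises.
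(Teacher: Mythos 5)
Your proposal is correct and follows exactly the route the paper takes: its proof of this theorem is literally the one-line observation that it follows from Proposition~\ref{pr:distance_1d}, Corollary~\ref{cor:constrainedrate}, and Theorem~\ref{th:rate_expect}. If anything, you are slightly more careful than the stated corollary, since you explicitly carry the extra $(\log n)^{2r\mu}$ factor from Proposition~\ref{pr:distance_1d} through the term $\gamma_n^{2\vartheta}c_n^{1-2\vartheta}/n^{\vartheta}$ and verify that this first term dominates, which is precisely how the additional $(\log n)^{2r\mu(1-2\vartheta)}$ in the final rate arises.
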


\begin{proof}
  This is a direct consequence of Proposition~\ref{pr:distance_1d},
  Corollary~\ref{cor:constrainedrate}, and Theorem~\ref{th:rate_expect}
\end{proof}

\begin{remark}
  Note that the rate obtained in the previous result greatly
  simplifies in the case where $p \ge 2$ and $q \le 4k+2$. Then, a
  short computation shows that it can be written as
  \[
  {\norm{\hat{f}_{\gamma_n}-f }_{L^q}} =
  \mathcal{O}\left(n^{-\frac{k+s}{2k+2s+1}} {(\log
      n)}^{\frac{2k+2s}{2k+2s+1}r}\right)\qquad\quad\text{ as } n \to \infty.
  \]
\end{remark}

In the one-dimensional case, it is also possible to obtain convergence
rates in the case where the regularity of the function $f$ is
overestimated by the regularization term.
In this case, the approach based on the multiscale distance function does not
readily apply, because it is inherently based on the assumption that
$f \in H^k_0(\T)$. Instead, it is possible to approximate $f$
by a sufficiently regular function, for which then the higher order
results can be applied. The final convergence rate then follows from
a combination of these higher order rates and the approximation error.

\begin{theorem}[Over-smoothing]\label{th:oversmoothing}
Let  $\mathcal{B}$ be normal, $d = 1$, $k \in \N$, $1 \le q \le 4k + 2$ and 
$$
f \in W^{s,\infty}_{0}(\T)\text{ or } B^{s,p'}_{\infty,0}(\T) \qquad \text{ with } s\in[1,k]\text{ and } p'\in[1,\infty].
$$
Let also $\hat f_{\gamma_n}$  be the MIND estimator by~\eqref{eq:constrainedprob} 
with the homogeneous Sobolev norm of order $k$, $\norm{D^k\cdot}_{L^2}$,  
 and the threshold $\gamma_n$ in~\eqref{eq:choice:gammaN}. Then 
it holds that, 
\[
\norm{\hat{f}_{\gamma_n} - f}_{L^q} = \mathcal{O}\left(n^{-s/(2s+1)} (\log n)^{\epsilon+s/(2s+1)}\right)\qquad\text{ as } n \to \infty,
\]
a.s. and in expectation, 
for any $\epsilon > \frac{(2r-1)k}{2k+1}$ with $r$ in~\eqref{eq:choice:gammaN}.
\end{theorem}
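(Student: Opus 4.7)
The plan is to reduce the over-smoothing situation to the high-regularity setting of Theorem~\ref{th:rate_higherorder} by smoothing $f$ at a scale $\delta = \delta_n \to 0$, producing a surrogate $f_\delta$ with extra Sobolev/Besov regularity, and then controlling $\hat f_{\gamma_n} - f$ via the triangle inequality
\[
\norm{\hat f_{\gamma_n} - f}_{L^q} \le \norm{\hat f_{\gamma_n} - f_\delta}_{L^q} + \norm{f - f_\delta}_{L^q}.
\]

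First I would construct $f_\delta \in B^{k+s',p'}_{\infty,0}(\T)$ for some auxiliary index $s' \in [1,k]$ to be optimized later, for instance by $L^2$-projection of $f$ onto the periodic spline space of order $2k$ on the uniform grid of spacing $\delta$, or by mollification with a smooth kernel at scale $\delta$. Classical Jackson/Bernstein-type estimates for Besov spaces then yield
\[
\norm{f - f_\delta}_{L^q} \lesssim \delta^{s}
\quad\text{and}\quad
\norm{f_\delta}_{B^{k+s',p'}_{\infty,0}} \lesssim \delta^{s-k-s'},
\]
together with the trivial pointwise bound $\norm{S_n(f-f_\delta)}_{\mathcal{B}} \le \sqrt{n}\,\norm{f-f_\delta}_{L^\infty} \lesssim \sqrt{n}\,\delta^{s}$.

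Next I would view $\hat f_{\gamma_n}$ as the MIND estimator for the pseudo-truth $f_\delta$, with effective noise $\tilde\xi_n := \xi_n + S_n(f - f_\delta)$ of multiresolution norm $\lesssim \gamma_n + \sqrt{n}\,\delta^{s}$ on the high-probability event $\norm{\xi_n}_{\mathcal{B}} \le \gamma_n$ from Proposition~\ref{prop:mulitresolution_probability}. Provided $\delta$ is chosen so that $\sqrt{n}\,\delta^{s}$ is dominated by $\gamma_n$ (up to a constant that can be absorbed into the threshold constant $C$ in~\eqref{eq:choice:gammaN}), the function $f_\delta$ is feasible for the MIND constraint, and Theorem~\ref{th:rate_higherorder} combined with Proposition~\ref{pr:distance_1d} applied to $f_\delta$ yields a bound on $\norm{\hat f_{\gamma_n} - f_\delta}_{L^q}$ whose hidden constant depends polynomially on $\norm{f_\delta}_{B^{k+s',p'}_{\infty,0}} \lesssim \delta^{s-k-s'}$. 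Substituting this bound into the triangle inequality and balancing the estimation and approximation pieces over $\delta$ and $s'$ then produces the claimed polynomial rate $n^{-s/(2s+1)}$, and the in-expectation statement follows by the same route using Theorem~\ref{th:rate_expect} in place of Corollary~\ref{cor:constrainedrate}.

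The hardest part will be the interplay between the feasibility condition $\sqrt{n}\,\delta^{s} \lesssim \gamma_n$, which forces $\delta \lesssim (\log n/n)^{1/(2s)}$ up to constants, and the naively balance-optimal scale $\delta \sim n^{-1/(2s+1)}$: the latter exceeds the feasibility bound in polynomial order, so a direct plug-in does not work. Reconciling these competing scales requires a careful choice of the auxiliary index $s'$ and precise tracking of how the constants in Proposition~\ref{pr:distance_1d} depend on $\norm{f_\delta}_{B^{k+s',p'}_{\infty,0}}$ and on the noise level. The condition $\epsilon > (2r-1)k/(2k+1)$ on the log exponent in the conclusion is precisely what is needed to absorb this trade-off after accounting for the stochastic concentration $\norm{\xi_n}_{\mathcal{B}} \lesssim (\log n)^{r}$.
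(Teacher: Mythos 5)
Your overall architecture (smooth surrogate at scale $\delta$, view $\hat f_{\gamma_n}$ as a MIND estimate of the surrogate, triangle inequality) matches the paper's, but you have correctly located the crux and then left it unresolved, and the resolution you hint at cannot work. The feasibility requirement $\norm{S_n(f-f_\delta)}_{\mathcal B}+\norm{\xi_n}_{\mathcal B}\le\gamma_n$ is a hard constraint: if the surrogate is not admissible for~\eqref{eq:constrainedprob}, Theorem~\ref{th:constrainedrate} does not apply to it at all, so no choice of the auxiliary index $s'$ and no tracking of constants in Proposition~\ref{pr:distance_1d} can compensate. With your pointwise bound $\norm{S_n(f-f_\delta)}_{\mathcal B}\le\sqrt n\,\norm{f-f_\delta}_{L^\infty}\lesssim\sqrt n\,\delta^s$, feasibility forces $\delta\lesssim((\log n)^r/\sqrt n)^{1/s}$, polynomially below the balancing scale $n^{-1/(2s+1)}$, and the resulting rate is strictly worse than $n^{-s/(2s+1)}$.

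The missing idea is that the multiresolution norm of a sampled function is \emph{not} governed by its sup norm but by the H\"older-$1/2$ modulus of its antiderivative: this is Lemma~\ref{lem:continuousMRnorm}, which gives $\norm{S_n h}_{\mathcal B}/\sqrt n\le\norm{H}_{W^{1/2,\infty}}+\norm{Dh}_{L^\infty}/(2n)$ with $H(t)=\int_0^t h$. To exploit it, the paper does not mollify $f$ directly; it approximates the antiderivative $F(t)=\int_0^t f$ by a spline $G_\lambda\in\mathcal S_{k+2}(\Gamma_\lambda;\T)$ via Proposition~\ref{prop:spline_approx} and sets $g_\lambda=G_\lambda'$, so that $\norm{S_n(f-g_\lambda)}_{\mathcal B}\lesssim\sqrt n\,\norm{F-G_\lambda}_{W^{1/2,\infty}}\lesssim\sqrt n\,\lambda^{-(s+1/2)}$. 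The extra factor $\lambda^{-1/2}$ turns the feasibility condition into $\sqrt n\,\lambda^{-(s+1/2)}\lesssim\gamma_n$, which is compatible (up to logarithms) with the balancing choice $\lambda\sim(n/\log n)^{1/(2s+1)}(\log n)^{\tilde\epsilon}$; the slack $\tilde\epsilon>0$ is what produces the constraint $\epsilon>\frac{(2r-1)k}{2k+1}$ in the statement. A further, more minor, divergence: the paper does not invoke Theorem~\ref{th:rate_higherorder} for the surrogate but only the trivial distance-function bound $\tilde d_n(0)=2\norm{D^kg_\lambda}_{L^2}\lesssim\lambda^{k-s}$ (as in Remark~\ref{rem:natural:bnd:dist:fun}), feeding this blow-up through the exponent $1-2\vartheta=1/(2k+1)$ of the first term in~\eqref{eq:constrained_est}; this is both simpler and sufficient, since $-\frac{k}{2k+1}+\frac{k-s}{(2k+1)(2s+1)}=-\frac{s}{2s+1}$.
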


\begin{proof}
See Appendix~\ref{sec:over:smooth}.
\end{proof}

\begin{remark}
  For simplicity, the convergence rates results of Theorems~\ref{th:rate_higherorder}
   and~\ref{th:oversmoothing} were only given
  in $\mathcal{O}$ notation. However, it is worth pointing out that
  the proofs, if followed closely, actually also provide the constants
  in these rates. Most importantly, one can show that the constant
  only depends on the norm of $f$ in the corresponding Besov or
  Sobolev space. More precisely, it can be shown that the constant can,
  in the Besov space case, be written in the form
  $C\norm{f}_{B_{p,0}^{k+s,p'}}^{1-2\vartheta}$ with $C > 0$
  only depending on $k$, $s$, $p$, $p'$, and $q$, and the analogous
  result holds for the Sobolev space case. {As we will see in the 
  next subsection, this observation
  leads to the partial adaptation property of the MIND estimator, in minimax sense. }
\end{remark}

\subsection{Comparison with Minimax Rates}\label{subsec:minimax:rate}

Given a class $\mathcal{F}$ of continuous functions, we define the minimax
$L^q$-risk of nonparametric regression~\eqref{problem} over $\mathcal{F}$ by
\[
\mathcal{R}_{q}(n; \mathcal{F}) := \inf\Bigl\{\sup_{f\in \mathcal{F}}\E{\norm{\hat f -
      f}_{L^q}} : \hat f \text{ is an estimator}\Bigr\}.
\]
In other words, we measure for each estimator, 
the maximal expected error over all functions $f \in \mathcal{F}$,
and then compute the infimum of this maximal error over the class of
all estimators.

In the case of $\mathcal{F}$ consisting of Sobolev or Besov functions of a
certain regularity, it is possible to derive explicit lower bounds for
the minimax risk $\mathcal{R}_q$. To that end, we introduce, for $s
\ge 0$, $1 \le p \le \infty$, and $L > 0$ the Sobolev ball
\begin{equation}\label{eq:sobolev:ball}
S^{s,p}_{L} := \left\{f\in W^{s,p}_{0}(\T) : \norm{f}_{W^{k,p}_{0}} \le L\right\},
\end{equation}
and for $s \ge 0$, $1 \le p,p' \le \infty$, and $L > 0$ the Besov ball 
\begin{equation}\label{eq:besov:ball}
B^{s,p,p'}_{L} := \left\{ f \in B^{s,p'}_{p,0}(\T) : \norm{f}_{B^{k,p'}_{p,0}} \le L\right\}.
\end{equation}
In \citep{Nemirovski1985} it has been shown that, for $s \in \N$, and $n$ sufficiently large, there exists a constant $C > 0$
depending only on $s$ such that
\begin{equation}\label{eq:minimax:rate}
\mathcal{R}_{q}(n; S^{s,p}_{L}) \ge C 
\begin{cases}
\left(\frac{\sigma^2}{n}\right)^{\beta}L^{1-2\beta} & \text{ if } q < (2s+1)p ,\\
\left(\frac{\sigma^2\log n}{n}\right)^{\beta}L^{1-2\beta} & \text{ if } q \ge (2s+1)p,\\
\end{cases}
\end{equation}
where
\[
\beta := 
\begin{cases}
\frac{s}{2s+1} & \text{ if } q < (2s+1)p, \\
\frac{s-1/p+1/q}{2s+1-2/p} & \text{ if } q \ge (2s+1)p.\\
\end{cases}
\] 
Following the proof in~\citep{Nemirovski1985}, one can show that this result
still holds for non-integer $s > 1/p$, and also for all the Besov
balls $B^{s,p,p'}_{L}$ with $s > 1/p$. Even more, in the case
of $q = (2s+1)p$, the lower bound can be tightened to include the
logarithmic factor $(\log n)^{(1-p/\min\{p,p'\})_+/q}$, see~\citep[][Theorem 1]{DJKP96}
for details.

\subsubsection{Partial adaptation}\label{subsubsec:partial:adapt}
Comparing these minimax $L^q$-risks with the convergence rates of 
MIND in Theorems~\ref{th:rate_higherorder} and~\ref{th:oversmoothing}, 
and Remark~\ref{rem:natural:bnd:dist:fun},
we see that, for $1 \le q \le 4k+2$, the polynomial part
of our rates coincides with the polynomial part of the minimax risk in
case either the function $f$ is contained in the Sobolev space
$W_0^{s,p}(\T)$ with either $1 \le s \le k$ and $p = \infty$,
$s = k$ and $2 \le p \le \infty$, or $k+1 \le s \le 2k$ and $p \ge
2$ (see Figure~\ref{fig:optrates}). In other words, in all of these
cases, the convergence rates we obtain with MIND are
optimal up to a logarithmic factor.

We want to stress here that our convergence rates do not rely on a
precise knowledge of the smoothness class of the function $f$. In
contrast, the regularization parameter $\gamma_n$ does only depend on
the sample size, and the smoothing order of the regularization term
need only be a rough guess of the actual smoothness of $f$. Neither in
the case where the smoothness of $f$ is overestimated nor in the case
where it is slightly underestimated do we obtain results that are,
asymptotically, far from being optimal. The  method MIND 
automatically adapts to the smoothness of the function $f$ independent
of our prior guess.

Note that the adaptation range of MIND scales with the smoothness order 
of regularization $k$. This suggests that we should choose $k$ as large as possible. 
The minimization problem in~\eqref{eq:constrainedprob} becomes, however,  
more numerically unstable as $k$ increases. Thus, the choice of $k$ should balance 
the performance and the numerical stability. In practice, we found that it works fine for
$k = 1,2, 3$ (cf. Section~\ref{subsec:choice:k}).

\begin{figure}[!htb]
\centering
\begin{tikzpicture}[scale=.65]

\draw[->] (1,1) -- (1,8);
\draw (0.7,8.3) node {$p$};
\draw[->] (1,1) -- (8,1);
\draw (8.3,0.7) node {$s$};

\draw (2,1) -- (2,1.1);
\draw (2,1) node[anchor=north] {$1$};
\draw (4,1) -- (4,1.1);
\draw (4,1) node[anchor=north] {$k$};
\draw (5,1) -- (5,1.1);
\draw (5,1) node[anchor=north] {$k+1$};
\draw (7,1) -- (7,1.1);
\draw (7,1) node[anchor=north] {$2k$};
\draw (1,1) node[anchor=north] {$0$};

\draw (1,7) -- (1.1,7);
\draw (1,7) node[anchor=east] {$\infty$};
\draw (1,3) -- (1.1,3);
\draw (1,3) node[anchor=east] {$2$};
\draw (1,2) -- (1.1,2);
\draw (1,2) node[anchor=east] {$1$};
\draw (1,1) node[anchor=east] {$0$};

\draw[color=gray,dotted] (1,7)--(7,7);
\draw[color=gray,dotted] (1,3)--(7,3);
\draw[color=gray,dotted] (5,1)--(5,7);
\draw[color=gray,dotted] (7,1)--(7,7);
\filldraw[fill=red!30,draw=red,very thick] (5,3) rectangle (7,7);

\draw[color=gray,dotted] (2,1)--(2,7);
\draw[color=gray,dotted] (4,1)--(4,7);
\draw[color=red,very thick] (2,7) -- (4,7);
\draw[color=red,very thick] (4,3) -- (4,7);

\draw (4.5, 8.5) node {$q \in [1, 4k+2]$};
\draw(4.5, 0) node{(a)};
\draw[color = black!50!green] (4, 3) node {\Large $*$};

\draw[->] (11,1) -- (11,8);
\draw (10.7,8.3) node {$p$};
\draw[->] (11,1) -- (18,1);
\draw (18.3,0.7) node {$q$};

\draw (11,7) -- (11.1,7);
\draw (11,7) node[anchor=east] {$\infty$};
\draw (11,3) -- (11.1,3);
\draw (11,3) node[anchor=east] {$2$};
\draw (11,2) -- (11.1,2);
\draw (11,2) node[anchor=east] {$1$};
\draw (11,1) node[anchor=east] {$0$};

\draw (12,1) -- (12,1.1);
\draw (12,1) node[anchor=north] {$1$};
\draw (13,1) -- (13,1.1);
\draw (13,1) node[anchor=north] {$2$};
\draw (15,1) -- (15,1.1);
\draw (15,1) node[anchor=north] {$4k+2$};
\draw (17,1) -- (17,1.1);
\draw (17,1) node[anchor=north] {$\infty$};
\draw (11,1) node[anchor=north] {$0$};

\draw[color=gray,dotted]   (11,1) -- (17,7);
\draw[color=gray,dotted]   (13,1) -- (13,3);
\draw[color=gray,dotted]   (17,1) -- (17,7);

\draw[color=gray,dotted] (11,7) -- (15,7);
\draw[color=gray,dotted] (11,3) -- (15,3);
\draw[color=gray,dotted] (15,1) -- (15,7);
\draw[color=gray,dotted] (12,1) -- (12,7);
\filldraw[fill=red!30,draw=red,very thick] (12,3) rectangle (15,7);

\draw[fill=blue!30,color=blue!30,very thin,fill opacity=0.5] (13,2) -- (17,2) --  (17,7) -- (13,3) -- (13,3);
\draw[color=blue,dashed,very thick] (13,3) -- (17,7);
\draw[color=blue,very thick]              (13,2) -- (13,3);
\draw[color=blue,dashed,very thick] (17,2) -- (17,7);
\draw[color=blue,dashed,very thick] (13,2) -- (17,2);

\draw (14.5, 8.5) node {$s \in [k+1, 2k] \cup \{k\}$};
\draw(14.5, 0) node{(b)};

\end{tikzpicture}
\caption{Adaptive minimax optimality over balls in
    $W^{s,p}_{0}(\T)$ or $B^{s,p'}_{p,0}(\T)$. (a) For
    $q \in [1, 4k+2]$, the MIND estimator with homogeneous Sobolev
    norm of order $k$ attains minimax optimal rates in terms of
    $L^q$-risk up to a log-factor, simultaneously in all spaces
    $W^{s,p}_{0}(\T)$ with smoothness parameters $s$, $p$ within
    the red region (``partial adaptation'').  By contrast,
    the Nemirovski's
    estimator $\hat{f}_{2,\eta}$ in~\eqref{eq:def:nemirovski:est} is minimax
    optimal up to a log-factor only for $W^{k,2}_{0}(\T)$,
    marked by a green asterisk. (b) For $s \in [k+1, 2k] \cup \{k\}$,
    MIND is optimal up to a log-factor in terms of $L^q$-risk over
    $S^{s,p}_{L}$ or $B^{s,p,p'}_{L}$ with
     parameters $p$, $q$ within the red region. Note that no
    linear estimator is optimal for  parameters $p$, $q$ in
    the blue region. 
\label{fig:optrates}}
\end{figure}
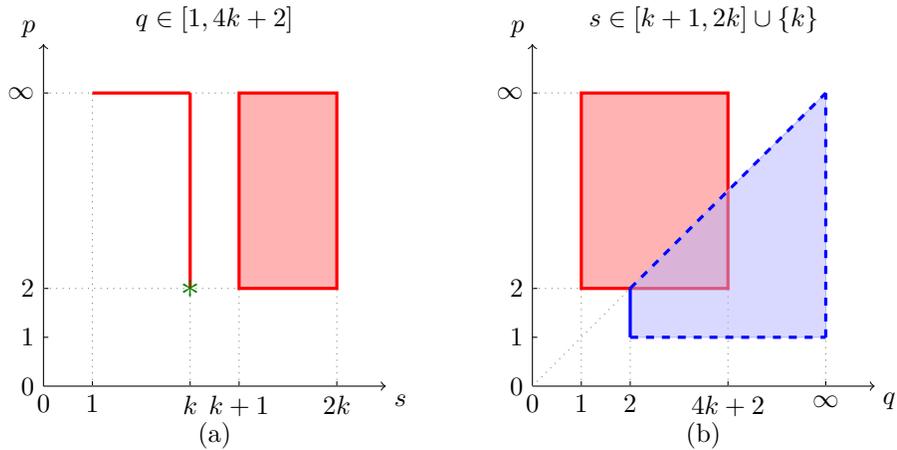

\section{Numerical Examples}\label{sec:num:example}

The MIND estimator defined by~\eqref{eq:constrainedprob} is the
solution to a {high dimensional} non-smooth convex optimization problem, due to the multiresolution norm. 
As mentioned in the introduction, there are {several}
efficient algorithms {nowadays} that are able to tackle such a problem. 
In this paper, we have chosen the
alternating direction method of multipliers (ADMM). It decomposes the
original problem~\eqref{eq:constrainedprob} into two sub-problems, the
first being the smoothing penalization problem, and the other the projection onto
the multiresolution ball. The latter one can be solved by the
Dykstra algorithm as introduced in~\citep{BoDy86}. We refer
to~\citep{FMM12b} for further details. It can be shown that the ADMM for
this problem converges linearly~\citep[cf.][]{DeYi13}, although the
theoretical rate might be very slow for large $k$, the smoothing order of
the regularization. We note that the
problem \eqref{eq:constrainedprob} can also be formulated as a
quadratic programming problem and solved, for instance, by the interior point
method~\citep[cf.][for example]{NNY94}. 

There are two practical concerns: 
The first is the choice of the system of cubes $\mathcal{B}$. The
general convergence rate results (cf.~Theorem~\ref{th:constrainedrate}) only require
that the system should be normal, see Remark~\ref{re:normSysExample}
for some examples. In the case of $d=1$, the concrete rates impose an
additional (but very weak) condition, namely that it should contain an
$m$-partition system for some $m$,  {i.e., $m$-regularity of $\mathcal{B}$}, 
see Theorem~\ref{th:rate_higherorder}. {For the examples in} our numerical simulations, 
we found that the system of all cubes, the system of all cubes with dyadic edge lengths,
and the $2$-partition system {perform comparably}. Therefore we {display the results for} the $2$-partition system in the following numerical experiments for
the sake of computational efficiency.

The other concern is the choice of the threshold $\gamma_n$. The
asymptotic theory only requires that $\gamma_n$ satisfies the condition 
{\eqref{eq:choice:gammaN}, which} is independent of the
interval system $\mathcal{B}$, and the
smoothness of the truth. In the finite sample situation, we recommend
a {refined} choice, which has a direct statistical interpretation, {cf. Section~\ref{subsec:var:stat:est}}. It
selects $\gamma_n$ as the $\alpha$-quantile of the multiscale
statistic, i.e.,
\begin{equation}\label{eq:def:gamma:significance}
\gamma_n(\alpha) := \inf\left\{\gamma:\Prob{\norm{\xi_n}_{\mathcal{B}}
    > \gamma} \le \alpha \right\}.
\end{equation}
This ensures that $f$ lies in the {confidence}
{set defined by the}
multiscale constraint in~\eqref{eq:constrainedprob} with probability
at least $1-\alpha$.
Thus we have 
\begin{equation}\label{eq:smoothControl}
\Prob{\norm{D^k\hat{f}_{\gamma_n}}_{L^2} \le \norm{D^k f}_{L^2}} \ge 1 - \alpha,
\end{equation}
that is, the MIND estimator is smoother than the truth with probability {at least}
 $1-\alpha$. {The asymptotic distribution of $\norm{\xi_n}_{\mathcal{B}}$ is, under general assumptions, a Gumbel law (after proper rescaling), see~\citep{Ka11,HalMun13}.} If $\mathcal{B}$ consists of all the cubes {and $\xi_n$ is standard Gaussian,}  then
\[ 
\gamma_n(\alpha) \sim \sqrt{2d\log n} +\frac{\log(d\log n) + \log J_d - 2\log\log({1}/{\alpha})}{2\sqrt{2d\log n}} \quad\text{ as } n \to \infty,
\]
where $J_d \in (0,\infty)$ is a constant. Although this violates the
condition {\eqref{eq:choice:gammaN} when $d =1$,} 
the asymptotic analysis in this paper
still {holds} for $\gamma_n(\alpha_n)$ if $\alpha_n \to 0$ sufficiently
fast, which might even possibly improve the rates, {in terms of the log-factor and the constant.} The estimation of
$\gamma_n$ can be done by Monte-Carlo simulations and is needed only
once for a fixed size of measurements and a fixed system of cubes.
The number of Monte-Carlo simulations is chosen as $10^{5}$ in the following examples. 

In the following simulations, we only consider the one-dimensional case $d=1$, and assume the noise is i.i.d. Gaussian with a known variance $\sigma^2$. In practice, one can easily {pre-}estimate $\sigma^2$, see e.g.~\citep{Rice84,HKT90,DMW98} among other references.

\begin{figure}[!h]
\begin{adjustbox}{addcode={\begin{minipage}{0.95\width}}{\caption{%
\label{fig:ConNem:DJsig}
Comparison of Nem in~\eqref{eq:def:nemirovski:est}, SS in~\eqref{eq:smooth:spline}, and the  MIND in~\eqref{eq:constrainedprob} (number of samples $n = 2^{11}$, noise level $\sigma = 0.12\norm{f}_{L^2}$).
}\end{minipage}},rotate=90,center}
\includegraphics[width= 1\textheight]{./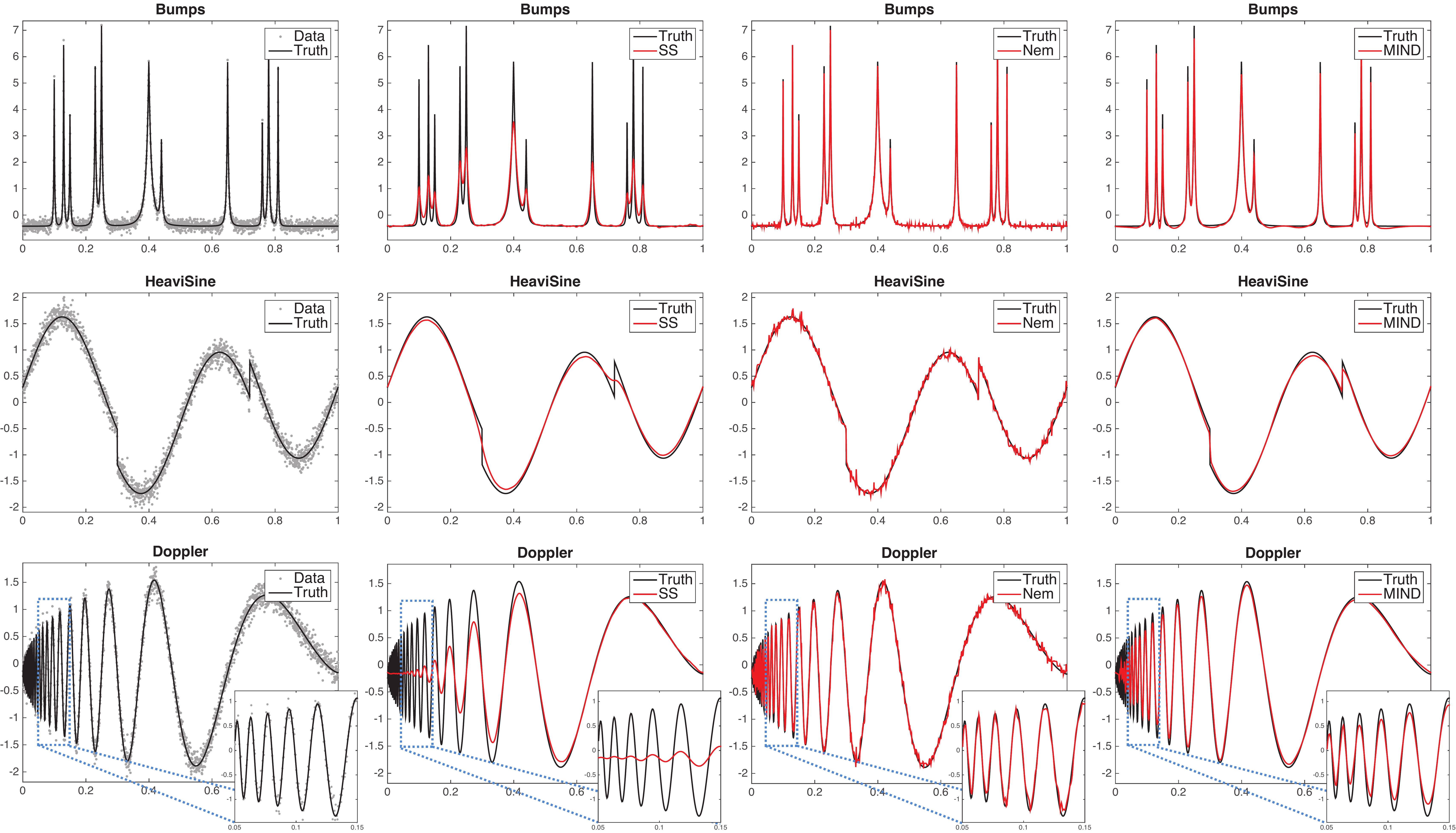}  %
\end{adjustbox}
\end{figure}

\subsection{Comparison study}

We now investigate the performance of MIND $\hat{f}_{\gamma_n(\alpha)}$ on spatially variable functions, Bumps, HeaviSine, and Doppler~\citep{DoJo94}, and compare it with the smoothing spline estimator (SS) $\hat{f}_{\lambda}$,  defined as the solution of 
\begin{equation}\label{eq:smooth:spline}
\min_{{f}} \norm{S_n {f} - y_n}_{\ell^2} + \lambda \norm{D^k {f}}_{L^2},
\end{equation}
and the Nemirovski's estimator (Nem) $\hat{f}_{2,\eta}$ in~\eqref{eq:def:nemirovski:est} with $p=2$. 
We choose $k = 1$ for all three estimators. The parameter $\alpha$ in MIND is set to 0.1, $\lambda$ in SS is tuned manually to give the best visual quality, and $\eta$ in Nem is chosen as the oracle $\norm{Df}_{L^2} (= : \eta_0)$, which is numerically
estimated using discrete Fourier transform. 

\begin{figure}[!h]
\centering
\includegraphics[width=0.9\textwidth]{./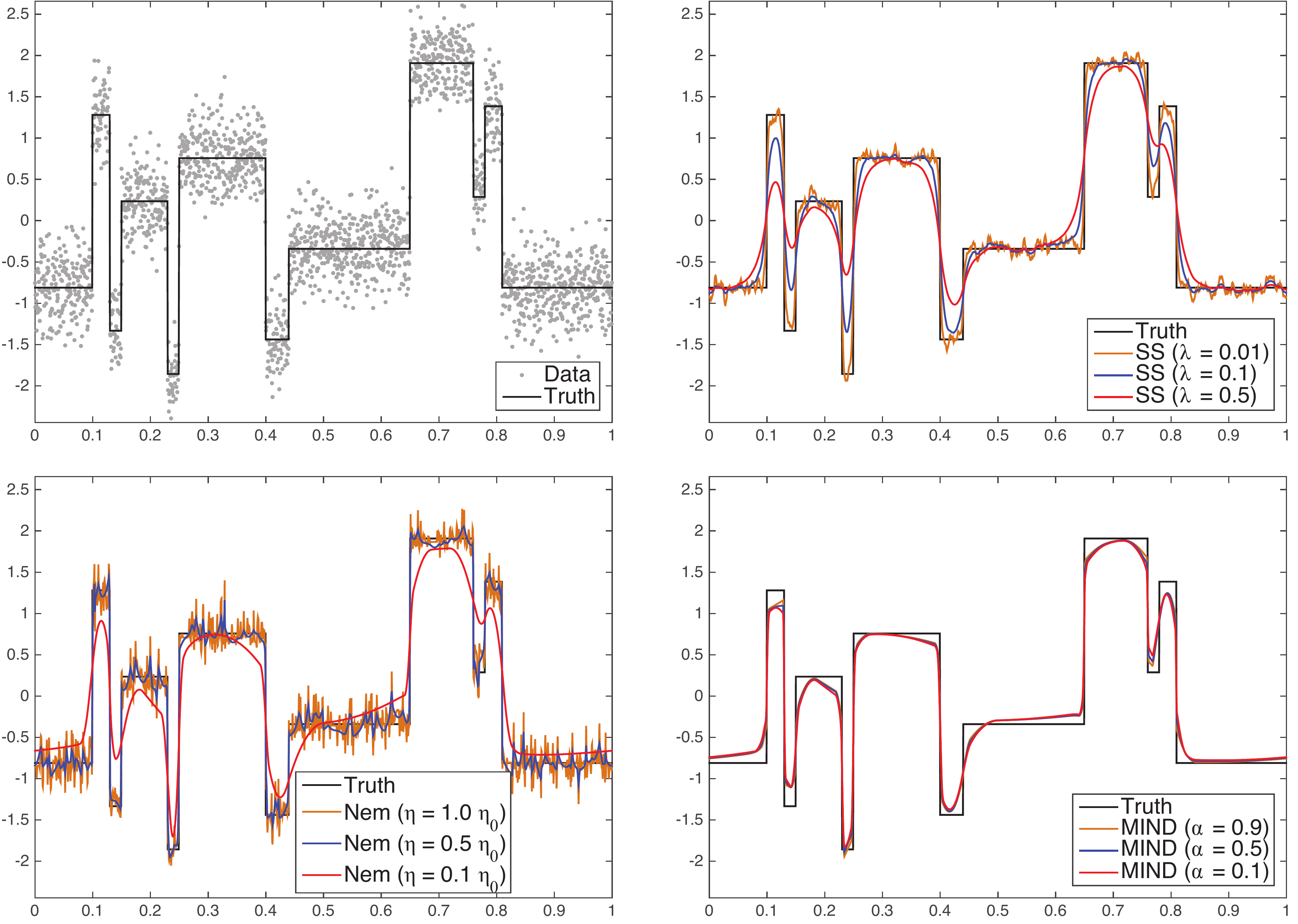}
\caption{{Impact of parameter choices: various $\eta$ for Nem  $\hat{f}_{2,\eta}$ in~\eqref{eq:def:nemirovski:est}, various $\lambda$ for SS $\hat{f}_{\lambda}$ in~\eqref{eq:smooth:spline}, and various $\alpha$ for the  MIND $\hat{f}_{\gamma_n}$ in~\eqref{eq:constrainedprob} with $\gamma_n = \gamma_n(\alpha)$ in~\eqref{eq:def:gamma:significance}. (number of samples $n = 2^{11}$, noise level $\sigma = 0.3\norm{f}_{L^2}$, and $\eta_0 = \norm{D^k f}_{L^2}$).}\label{fig:robustness} }
\end{figure}

The simulation results are summarized in Figure~\ref{fig:ConNem:DJsig}. One can see that MIND detects a large number of features {at} various scales of smoothness, and performs best on all the test signals. By contrast, SS with the ``optimal'' parameter recovers only a narrow range of scales of smoothness; for instance, on the Doppler signal, it works well for the smoother part (on $[0.5, 1]$), but deteriorates fast as the signal gets  more oscillatory. 
The Nem  with oracle $\eta (= \eta_0)$ is still very noisy on each test signal. We note that convex duality~(cf. Section~\ref{subsec:var:stat:est}) implies that there is a one-to-one correspondence between MIND and Nem as long as the different parameters are not unreasonably large.  The Nem will reproduce the results by MIND if we choose {as the threshold $\eta$,} $0.8 \eta_0$ for Bumps, $0.3\eta_0$ for HeaviSine, and $0.6 \eta_0$ for Doppler. {This means that,} even if $\eta_0 \equiv \norm{Df}_{L^2}$ is known exactly, one cannot find a universal threshold $\eta$ for Nem, which {explains our numerical findings.}

\subsection{Robustness {and stability}}
We examine the robustness of Nem, SS, and MIND, with respect to the choice of parameters, and the smoothness assumption, on the Blocks signal~\citep{DoJo94}, which is not even continuous, and hence falls not into the domain of our estimator. From Figure~\ref{fig:robustness} we find that the MIND estimator is rather robust to the choice of significance level $\alpha$, while Nem and SS are much more sensitive. Besides, MIND recovers the truth quite well with the correct number of local extrema, and slight distortion near change-points. As we already noted before, the performance SS is restricted to some fixed scale of smoothness. In contrast, Nem with a proper choice of threshold $\eta$ adapts to a wider range of smoothness scales, which is due to its relation to MIND via duality. Thus, this study again confirms that MIND  is practically preferable over Nem and SS. 

\begin{figure}[!h]
\centering
\includegraphics[width=0.65\textwidth]{./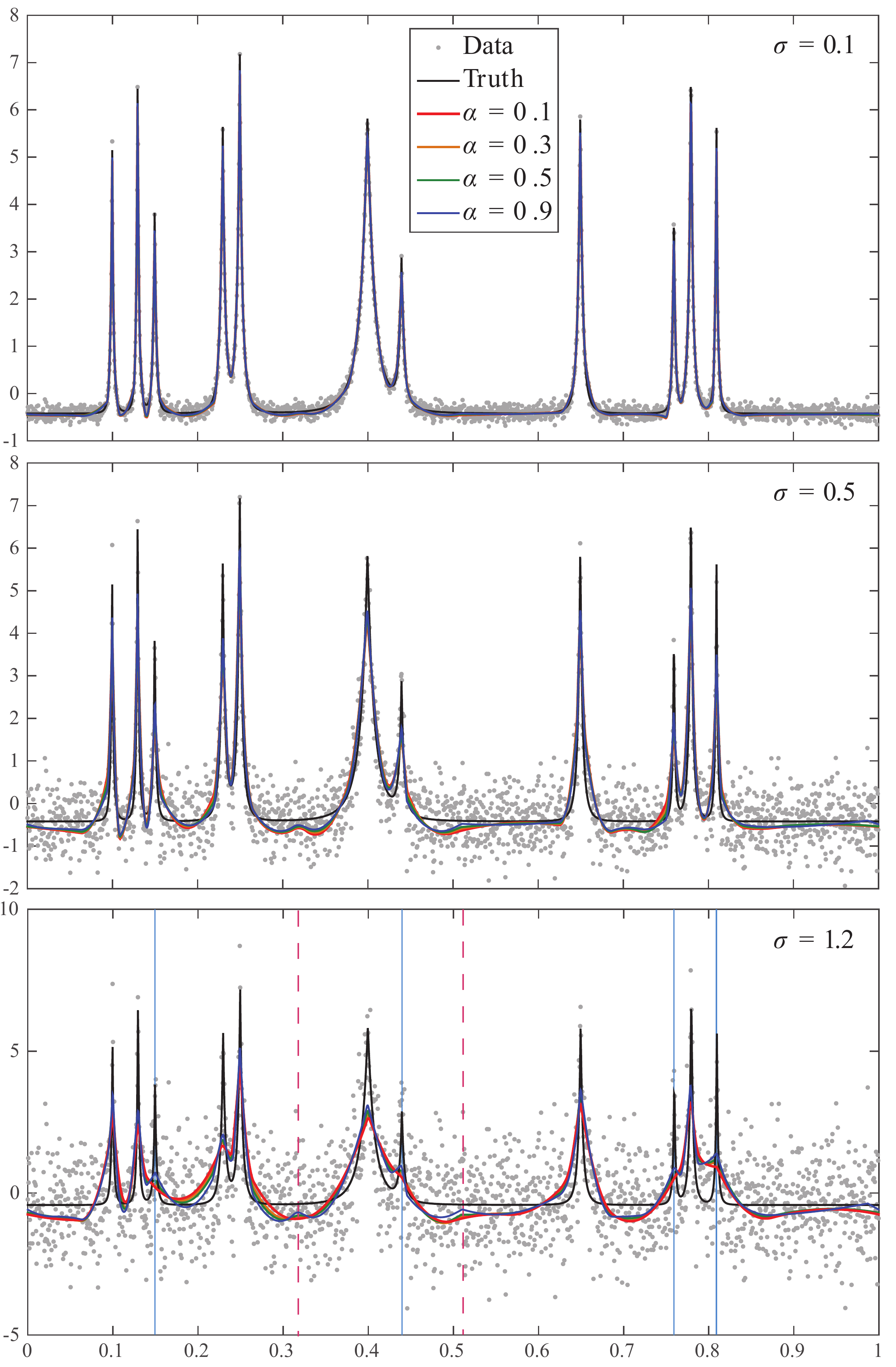}
\caption{{Stability of MIND in significance level $\alpha$ and noise level $\sigma$. The reconstructions by MIND $\hat{f}_{\gamma_n}$ with $\gamma_n = \gamma_n(\alpha)$ for a range of $\alpha$'s are shown, together with the true signal and noisy data, in the cases of different noise levels (number of samples $n = 2^{11}$). }\label{fig:stability} }
\end{figure}

{Now, we continue to consider the impact of significance level on the performance of the MIND estimator.  Exemplarily, we choose Bumps as the test signal for different noise levels. In Figure~\ref{fig:stability}, it shows that MIND with various choices of significance levels perform almost identically well in the case of low noise level ($\sigma = 0.1$) and medium noise level ($\sigma = 0.5$). However, in the high noise level ($\sigma = 1.2$) case, MIND with larger $\alpha$ tends to detect more bumps. For example, MIND ($\alpha =0.9$) recovers 6 more bumps than MIND ($\alpha = 0.1$), four out of which are actually correct (marked by vertical blue lines), while 2 false bumps are detected (marked by vertical red dashed lines, in the bottom panel of Figure~\ref{fig:stability}). Recall that the significance level $\alpha$ can be interpreted as an error control in the sense of~\eqref{eq:smoothControl}. Thus, the additional power by an increased significance level comes at the expense of a lower confidence about the inference. 
}

Note additionally that all the test signals considered so far are not strictly periodic, so the simulations also reveal that MIND is not {too} sensitive to the periodicity assumption. In practice, one can extend a non-periodic function to a periodic one by symmetric extension, see for instance~\citep{Mal09}. 
  
\begin{figure}[!ht]
\centering
\includegraphics[width=0.9\textwidth]{./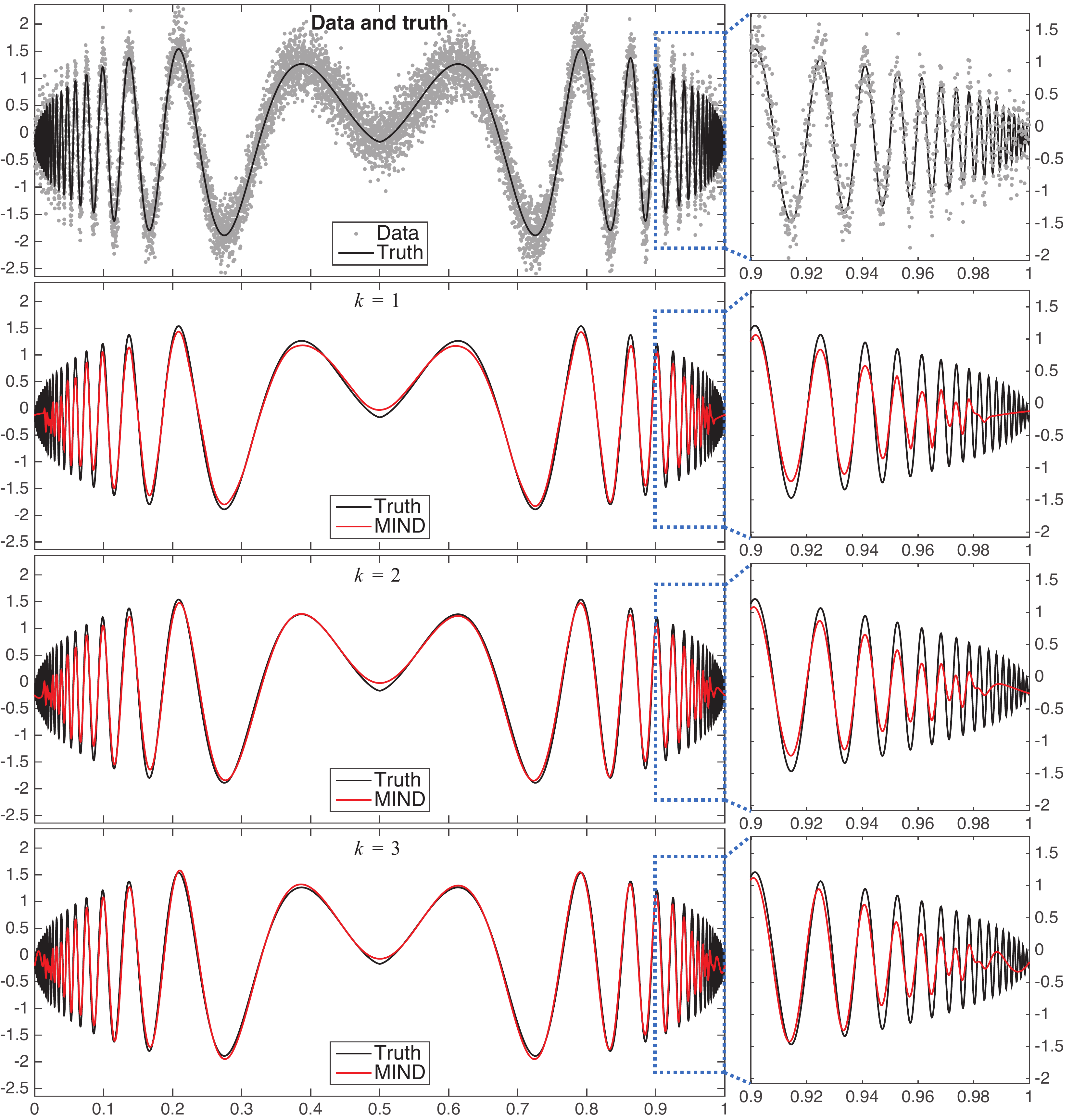}
\caption{{Various choices of $k$ for MIND in~\eqref{eq:constrainedprob} (number of samples $n = 2^{13}$, noise level  $\sigma = 0.3\norm{f}_{L^2}$).}\label{fig:Con:impact:k}}
\end{figure}

\subsection{Choice of smoothness order}\label{subsec:choice:k}
Now we explore the choice of smoothness parameter $k$ in the regularization term for the MIND estimator. The Doppler with symmetric extension (see Figure~\ref{fig:Con:impact:k}) is chosen as the test signal.
The significance level for MIND is set to $\alpha = 0.1$. Figure~\ref{fig:Con:impact:k} shows that MIND detects more features of different smoothness scales as $k$ increases, namely $24$ peaks for $k = 1$, $26$ for $k = 2$, and $27$ for $k = 3$. Meanwhile, the height of the peaks gets more accurate for larger $k$.  This is in accordance with our theoretical finding that the adaptation range increases with $k$, see Section~\ref{subsubsec:partial:adapt}. As already mentioned, one should, however, notice that the optimization problem becomes numerically {more} ill-conditioned as $k$ increases.

\section{Discussion}\label{sec:discuss}

In this paper, 
we have {introduced} a constrained variational estimator, MIND, which minimizes the $L^2$-norm of the
$k$-th order derivatives, $k$ being the anticipated smoothness of the 
function to be recovered,
subject to the constraint that the multiresolution norm of the residual
is bounded by some parameter $\gamma_n$ depending on the sample size $n$.
The idea behind this approach is that this norm effectively allows
to differentiate between smooth functions and noise, {as} the multiresolution norm of a continuous function is of the
order of $\sqrt{n}$, while the expected multiresolution norm of a sample of
independent sub-Gaussian noise is of the order of $\sqrt{\log n}$.
If we therefore use a threshold parameter $\gamma_n\sim (\log n)^r$ with
$r > 1/2$ we can expect that, for a sufficiently large sample size,
our  estimator, MIND, will be close to the true function, while the residuals
consist mostly of noise.

The main theoretical contribution of this paper was to underpin the already known empirically good performance {of MIND in several special cases} by some theoretical evidence. 
{For general dimension $d$,} from an interpolation inequality for the multiresolution norm and Sobolev norms (Proposition~\ref{prop:interpolation_regression}), we derive asymptotic convergence rates provided
 that $f \in H^k_0(\T^d)$ and one applies regularization with the homogeneous $H^k$-norm (see Remark~\ref{rem:natural:bnd:dist:fun}). Moreover, these rates turn out to be minimax optimal up to a logarithmic factor.
In order to derive convergence rates for different smoothness classes,
we have adapted the concept of approximate source conditions, to our statistical setting. These
are known to be a useful tool for the derivation of rates for deterministic
inverse problems. 
However, these conditions are quite abstract, and it is not immediately
clear how they relate to more tangible properties of $f$.

In the one-dimensional setting, a much more detailed analysis {is possible}.
Here the abstract conditions for convergence rates {can be related}
to approximation properties of splines. Mainly we have shown that the rates
depend on how well the $k$-th derivative of the function $f$ can be approximated
by $B$-splines with coefficients that are small with respect to the dual
multiresolution norm. Using results from approximation theory, we were
able to translate the approximate source conditions into very general
smoothness conditions for the function $f$. Mainly this gives us
optimal convergence rates for a function $f \in
H^{s}_{0}(\T)$ with $k+1 \le s \le 2k$. More general, we have
obtained with this argumentation convergence rates for functions $f$
contained in the fractional order Sobolev space
$W^{s,p}_{0}(\T)$ with $k+1\le s \le 2k$, and the rates are
again optimal as long as $p \ge 2$. Moreover, the same results hold
for comparable Besov spaces. While these results are only concerned
with functions $f$ that are of higher regularity than assumed
a-priori, it is also possible to derive rates for the case where $f$
is of lower regularity, that is, where the prior assumption that $f\in
H^k_0(\T)$ fails. The idea here is to approximate $f$ by a
spline of higher regularity and then to apply the higher order
convergence rate results to this spline. The final rate then results
from a trade-off between the approximation power of the spline and the
higher order convergence rate. With this technique, one obtains
optimal convergence rate for the lower order setting $f \in
W^{s,\infty}_0(\T)$ with $1 \le s \le k$.

It is important to note here that the choice of the parameter
$\gamma_n$ is independent of the actual smoothness of $f$. 
{This is why} MIND
yields (up to a logarithmic factor) simultaneously optimal convergence
rates for a range of smoothness classes {(with smoothness 
order $s \in [1,k] \cup [k+1, 2k]$)}, making it truly an adaptive
method. Additionally, the numerical results indicate that MIND
appears to be fairly robust with respect to the actual choice of the
parameter $\gamma_n$ for a given sample size $n$, further enhancing
its practical applicability.

There are several questions still open concerning MIND for nonparametric
regression. First of all, almost all concrete results concerning
convergence rates in this paper were derived for a one-dimensional
setting. In higher dimensions we only have the (somehow generic)
result mentioned in Remark~\ref{rem:natural:bnd:dist:fun} that gives us an optimal
convergence rate if our guess for the smoothness class of $f$ is
correct. It is, however, not at all obvious how to obtain rates for
higher order smoothness classes. In the one-dimensional case, the
method we used relied both on approximation results using $B$-splines
and on estimates for the dual multiresolution norm of the coefficients
of these $B$-splines. In higher dimensions, we expect that similar
results for polyharmonic splines would be required, but it is not
clear which basis splines have to be used {(cf.~\eqref{eq:dn:basis:version} 
and~\eqref{eq:dn:frame:version}). Moreover, in the literature, there 
is few results on the size of approximation coefficients, which are 
necessary for our analysis. } Similarly, the method we
have used for the derivation of the lower order convergence rates
relies intrinsically on spline approximation, which, again, makes the
generalization to higher dimensions difficult.

Also in the one-dimensional case there are several interesting open
questions. Our results only apply to a periodic setting with functions
that have zero mean. The main reason for the restriction to periodic
functions is that this avoids having to deal with boundary conditions
that would have to be taken into account in non-periodic cases. 
{
The restriction to functions with zero mean on the other hand is to simplify 
norms of Sobolev spaces. Dropping the requirement of zero mean
is possible; for instance, one can instead use $\hat f_{\gamma_n}^0 + \bar y_n$, 
with $\hat f_{\gamma_n}^0$ the MIND~\eqref{eq:constrainedprob} applied to $(y_n - \bar y_n)$.   
The same convergence rates still hold true by the analysis  
presented in this paper, and by the fact that $\bar y_n$ converges to the mean of the truth, $\int_{[0,1)} f(x) dx$, at a
 parametric rate, $\mathcal{O}_{\mathbb{P}}(1/\sqrt{n})$.
}
Most importantly, we are concerned with the gap between
$H^{k}_0(\T)$ and $H^{k+1}_0(\T)$. It seems reasonable
to assume that MIND is asymptotically optimal
also for functions in $H^s_0(\T)$ with $k < s < k+1$, but the
methods we have used for the derivation of the different rates appear
not to be applicable to this case. Also, while we do obtain
convergence rates for functions $f\in W^{s,p}_0(\T)$ with $p <
2$, these rates are not optimal. Here we suspect that this is due to
the fact that we use the $L^2$-norm of the $k$-th order derivative for
regularization and that better rates could be obtained by using the
$L^1$-norm instead.

In our future work, we will try to extend our results to the cases
mentioned above, in particular to higher dimensional and non-periodic
settings. Additionally, we plan to consider a generalization to the
solution of ill-posed operator equations. In particular for
deconvolution problems we expect that very similar
results can be obtained as in this paper, as long as the convolution
kernel has sufficiently slowly decreasing Fourier coefficients.

\appendix

\section{Proof of Theorem~\ref{th:constrainedrate}}\label{app:constrainedrate}

The assumption $\norm{S_nf -y_n}_{\mathcal{B}}\le \gamma_n$ implies that
$f $ is admissible for the minimization
problem~\eqref{eq:constrainedprob}, which in turn implies that
\[
\frac{1}{2}\norm{D^k \hat{f}_{\gamma_n}}_{L^2}^2 \le \frac{1}{2}\norm{D^k f }_{L^2}^2.
\]
As a consequence, we obtain the estimate
\[
\begin{aligned}
 \frac{1}{2} \norm{D^k \hat{f}_{\gamma_n}-D^k f }_{L^2}^2 
  &= \frac{1}{2}\norm{D^k \hat{f}_{\gamma_n}}_{L^2}^2 - \frac{1}{2}\norm{D^k f }_{L^2}^2 -
  \inner{f }{\hat{f}_{\gamma_n}-f }_{H^k_0}\\
  &\le -\inner{f }{\hat{f}_{\gamma_n}-f }_{H^k_0}\\
  &= \min_t
  \min_{\norm{\omega}_{\mathcal{B}^*} \le t} \Bigl(
  \inner{S_n^*\omega - f}{\hat{f}_{\gamma_n} -
    f}_{H^k_0} - \inner{S_n^*
    \omega}{\hat{f}_{\gamma_n}-f}_{H^k_0}\Bigr)\\
  &\le \min_t
  \min_{\norm{\omega}_{\mathcal{B}^*} \le t} \Bigl(
  \norm{D^k S_n^*\omega - D^k f}_{L^2}\norm{D^k \hat{f}_{\gamma_n} -
    D^k f}_{L^2} \\
  &\qquad\qquad\qquad\qquad{}+
  \norm{\omega}_{\mathcal{B}^*}\norm{S_n \hat{f}_{\gamma_n} - S_n f}_{\mathcal{B}}\Bigr)\\
  &\le \min_t \Bigl( d_n(t) \norm{D^k\hat{f}_{\gamma_n}-D^kf }_{L^2} +
  t\norm{S_n\hat{f}_{\gamma_n}-S_nf }_{\mathcal{B}}\Bigr).
\end{aligned}
\]
Thus we have for every $t \ge 0$ the inequality
\[
\frac{1}{2}\norm{D^k \hat{f}_{\gamma_n}-D^k f }_{L^2}^2 \le 
d_n(t) \norm{D^k\hat{f}_{\gamma_n}-D^kf }_{L^2} +
t\norm{S_n\hat{f}_{\gamma_n}-S_nf }_{\mathcal{B}}.
\]
Since
\[
\norm{S_n\hat{f}_{\gamma_n}-S_nf }_{\mathcal{B}} \le 2\gamma_n,
\]
we obtain the inequality
\begin{equation}\label{eq:constrained_h1}
  \begin{aligned}
    \norm{D^k \hat{f}_{\gamma_n}-D^kf }_{L^2}
    &\le  d_n(t) + \sqrt{d_n(t)^2
      +2t\norm{S_n\hat{f}_{\gamma_n}-S_nf }_{\mathcal{B}}} \\
    &\le 2d_n(t) + (2t)^{1/2}\norm{S_n\hat{f}_{\gamma_n}-S_nf }_{\mathcal{B}}^{1/2}\\
    &\le 2d_n(t) + 2(\gamma_n t)^{1/2}
  \end{aligned}
\end{equation}
for every $t \ge 0$.
We now recall the interpolation inequality (see Proposition~\ref{prop:interpolation_regression})
\begin{multline}\label{ineq:reg_var}
  \norm{\hat{f}_{\gamma_n} - f}_{L^q} \le C \max\biggl\{ \frac{\norm{S_n(
      \hat{f}_{\gamma_n} - f)}_{\mathcal{B}}^{2\vartheta}}{n^\vartheta}\norm{D^k( \hat{f}_{\gamma_n} - f)}_{L^2}^{1-2\vartheta},\, \\
  \frac{\norm{S_n( \hat{f}_{\gamma_n} - f)}_{\mathcal{B}}}{n^{1/2}},\,\frac{\norm{D^k( \hat{f}_{\gamma_n} - f)}_{L^2}}{n^{\vartheta'}}\biggr\}
\end{multline}
for $n$ sufficiently large,
with some $C > 0$ and $\vartheta' = 2k\vartheta/d$.

If the maximum in~\eqref{ineq:reg_var} is attained at the first term,
the estimate~\eqref{eq:constrained_h1} implies that
\[
\begin{aligned}
  \norm{\hat{f}_{\gamma_n}-f }_{L^q}
  &\le
  C\frac{\norm{S_n\hat{f}_{\gamma_n}-S_n f}_{\mathcal{B}}^{2\vartheta}}{n^{\vartheta}}
  \norm{D^k \hat{f}_{\gamma_n}-D^k f}^{1-2\vartheta}_{L^2}\\
  &\le C \frac{(2\gamma_n)^{2\vartheta}}{n^\vartheta} \min_t \bigl(
  2d_n(t) + 2 (\gamma_n t)^{1/2}\bigr)^{1-2\vartheta}\\
  &\le 2C
  \frac{\gamma_n^{2\vartheta}}{n^\vartheta}\min_{t}\bigl( d_n(t) +
  (\gamma_n t)^{1/2}\bigr)^{1-2\vartheta}.
\end{aligned}
\]
On the other hand, if the maximum in~\eqref{ineq:reg_var} is attained
at the second term, we have
\[
\norm{\hat{f}_{\gamma_n} - f}_{L^q} \le C\frac{\norm{S_n( \hat{f}_{\gamma_n} - f)}_{\mathcal{B}}}{n^{1/2}}
\le 2C \frac{\gamma_n}{n^{1/2}}.
\]
Finally, if the maximum in~\eqref{ineq:reg_var} is attained
at the third term, we have
\begin{multline*}
\norm{\hat{f}_{\gamma_n} - f}_{L^q} \le C\frac{\norm{D^k( \hat{f}_{\gamma_n} - f)}_{L^2}}{n^{\vartheta'}}
\le C \min_t\frac{2d_n(t) + 2(\gamma_nt)^{1/2}}{n^{\vartheta'}} \\
\hfill\le 2C\frac{1}{n^{\vartheta'}}\min_{t}\bigl( d_n(t) + (\gamma_n t)^{1/2}\bigr).\hfill \ensuremath{\square}
\end{multline*}

\section{Proof of Theorem~\ref{th:rate_expect}}\label{app:expectedrate}

Denote in the following
\[
p(t) := \Prob{\norm{\xi_n}_{\mathcal{B}} \le t}.
\]
Using Theorem~\ref{th:constrainedrate}, we see that we can estimate,
for $n$ sufficiently large,
\begin{multline}\label{eq:expected_h1}
  \E{\norm{\hat{f}_{\gamma_n}-f}_{L^q}}
  \le \Prob{\norm{\xi_n}_{\mathcal{B}} \le \gamma_n} C n^{-\mu(1-2\vartheta)-\vartheta} {(\log n)}^{r(1+2\vartheta)/2}\\
  + \int_{\gamma_n}^\infty \sup\set{\norm{\hat{f}_{\gamma_n}-f}_{L^q}}{\norm{\xi_n}_{\mathcal{B}} = t} dp(t).
\end{multline}

In the following, we will show that the second term on the right hand side
of~\eqref{eq:expected_h1} tends to zero faster as $n\to\infty$. To that end, we observe first that
the Sobolev embedding theorem~\citep[][Theorem 4.12]{AF03} and the
Poincar\'e inequality~\citep[][Theorem 4.4.2]{Zie89} imply that
\[
\norm{\hat{f}_{\gamma_n}-f}_{L^q}
\le \norm{\hat{f}_{\gamma_n}}_{L^q} + \norm{f}_{L^q}
\le C\norm{D^k\hat{f}_{\gamma_n}}_{L^2} + \norm{f}_{L^\infty}
\]
for some constant $C$ depending only on $q$, $d$ and $k$.
Moreover, by construction, we have
\[
\norm{D^k\hat{f}_{\gamma_n}}_{L^2} \le \norm{D^k g}_{L^2}
\text{ for all } g \text{ satisfying }
\norm{S_n g - S_n f - \xi_n}_{\mathcal{B}} \le \gamma_n.
\]

Now let $h\in H^k(\R^d)$ be such that $h(0) = 1$,
$\int_{\R^d} h(x)\,dx = 0$, and $\supp h \subset [-1/2,1/2]^d$.
Define moreover, for $n\in\N$ and $x \in \Gamma_n$, the function
$h_{n,x}\colon\T^d\to\R$ by
\[
h_{n,x}(y) = h(n^{1/d}(y-x)).
\]

Let now $n$ and $\xi_n\in\R^{\Gamma_n}$ be fixed and define
\[
g := \sum_{x\in\Gamma_n} (f(x)+\xi_n(x)) h_{n,x}.
\]
Since the functions $h_{n,x}$, $x \in \Gamma_n$, have pairwise disjoint
supports, it follows that
\begin{align*}
\norm{D^k g}_{L^2} = & \sum_{x\in\Gamma_n} \abs{f(x)+\xi_n(x)}\norm{D^k h_{n,x}}_{L^2}
= \sum_{x\in\Gamma_n} \abs{f(x)+\xi_n(x)}  n^{\frac{2k-d}{2d}}\norm{D^k h}_{L^2}\\
= & n^{\frac{2k-d}{2d}}\norm{S_n f + \xi_n}_{\ell^1} \norm{D^k h}_{L^2}
\le n^{\frac{2k+d}{2d}} \bigl(\norm{f}_{L^\infty} + \norm{\xi_n}_{L^\infty}\bigr)\norm{D^k h}_{L^2}.
\end{align*}
From the inequality $\norm{\xi_n}_{L^\infty} \le \norm{\xi_n}_{\mathcal{B}}$ we thus obtain
that, for some constant $C$ only depending on $q$, $d$, and $k$,
\[
\sup\set{\norm{\hat{f}_{\gamma_n}-f}_{L^q}}{\norm{\xi_n}_{\mathcal{B}} = t}
\le C n^{\frac{2k+d}{2d}} (\norm{f}_{L^\infty} + t).
\]

As a consequence, we can estimate the last term in~\eqref{eq:expected_h1} by
\begin{multline*}
  \int_{\gamma_n}^\infty \sup\set{\norm{\hat{f}_{\gamma_n}-f}_{L^\infty}}{\norm{\xi_n}_{\mathcal{B}} = t} dp(t)
  \le \int_{\gamma_n}^\infty Cn^{\frac{2k+d}{2d}}(\norm{f}_{L^\infty}+t)\,dp(t)\\
  = Cn^{\frac{2k+d}{2d}}(\norm{f}_{L^\infty}+\gamma_n)(1-p(\gamma_n)) - Cn^{\frac{2k+d}{2d}}\int_{\gamma_n}^\infty (p(t)-1)\,dt.
\end{multline*}
From Proposition~\ref{prop:mulitresolution_probability} we obtain that
\[
(1-p(t)) \le 2n^2 e^{-\frac{t^2}{2\sigma^2}}
\]
for sufficiently large $n$. Thus we see that
\begin{align*}
  &\int_{\gamma_n}^\infty \sup\set{\norm{\hat{f}_{\gamma_n}-f}_{L^\infty}}{\norm{\xi_n}_{\mathcal{B}} = t} dp(t)\\
  \le& 2Cn^{\frac{2k+5d}{2d}}(\norm{f}_{L^\infty}+\gamma_n)e^{-\frac{\gamma_n^2}{2\sigma^2}} 
  + 2Cn^{\frac{2k+5d}{2d}}\int_{\gamma_n}^\infty e^{-\frac{t^2}{2\sigma^2}}\,dt
  \le C'n^{\frac{2k+5d}{2d}} \gamma_n e^{-\frac{\gamma_n^2}{2\sigma^2}}
\end{align*}
for sufficiently large $n$.
Now the choice of $\gamma_n$ implies that
\[
n^{\frac{2k+5d}{2d}}\gamma_n e^{-\frac{\gamma_n^2}{2\sigma^2}} = \mathcal{O}(n^{-\eps})
\]
as $n \to \infty$ for some $\eps > 0$. This shows that the second
term in~\eqref{eq:expected_h1} tends to zero faster as $n\to\infty$,
which concludes the proof of Theorem~\ref{th:rate_expect}. \hfill \ensuremath{\square}

\section{Proof of Proposition~\ref{pr:distance_1d}}\label{app:proof_dist_1d}

The main idea of the proof of Proposition~\ref{pr:distance_1d}
is to approximate the function $f$ with splines, the coefficients
of which are (relatively) small with respect to the dual multiresolution
norm. As a preparation, we will need several results concerning
approximation properties of splines, most of which are well known
in approximation theory, and a result that allows us to bound
the dual multiresolution norm of a spline function.

\subsection{Approximation properties of splines}

\begin{proposition}[Condition number of B-splines]\label{prop:size_coef}
Assume $\{Q^m_i(x), i=0,\ldots,n-1\}$ is the family of normalized B-splines in $\mathcal{S}_m(\Gamma_n;\T)$. Then for any $c_i\in\R, i=0$, $\ldots,n-1$,
\begin{equation}\label{eq:cond:bspline}
\norm{(c)_{i=0}^{n-1}}_{p}\le m 2^mn^{1/p}\norm{\sum_{i=0}^{n-1}c_iQ^m_i}_{L^{p}} \quad \text{ for } 1 \le p \le \infty.
\end{equation}
\end{proposition}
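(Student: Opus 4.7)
The plan is to use the classical duality approach to B-spline stability, going back to de Boor. The key ingredient is the existence of a dual basis $\{\psi_i^m\}_{i=0}^{n-1}$ for the normalized B-splines $\{Q_i^m\}_{i=0}^{n-1}$ in $\mathcal{S}_m(\Gamma_n;\T)$, that is, functions $\psi_i^m : \T \to \R$ with $\supp \psi_i^m \subseteq \supp Q_i^m = [i/n,(i+m)/n)$ (mod $1$) and
$$\int_{\T} Q_j^m(x)\,\psi_i^m(x)\,dx = \delta_{ij}.$$
The essential quantitative fact is that such a dual basis can be chosen with $\norm{\psi_i^m}_{L^\infty} \le 2^m n$. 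This is the classical de Boor estimate for the reference B-spline on a unit interval (the $2^m$ arising from the explicit formula in terms of divided differences), transported to the grid of spacing $1/n$; the factor $n$ compensates for $\int Q_i^m = 1/n$.

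Once the dual basis is at hand, the rest is short. Given $s = \sum_{j=0}^{n-1} c_j Q_j^m$, the duality relation yields $c_i = \int_{\T} s\,\psi_i^m\,dx$. Since $\supp \psi_i^m \subseteq \supp Q_i^m$ has Lebesgue measure $m/n$, Hölder's inequality with conjugate exponent $p'$ gives
$$|c_i| \le \norm{s}_{L^p(\supp Q_i^m)} \, \norm{\psi_i^m}_{L^{p'}} \le 2^m n \Bigl(\tfrac{m}{n}\Bigr)^{1/p'} \norm{s}_{L^p(\supp Q_i^m)} = 2^m m^{1/p'} n^{1/p} \norm{s}_{L^p(\supp Q_i^m)}.$$
Raising to the $p$-th power, summing over $i$, and using the crucial finite-overlap property that every point of $\T$ lies in exactly $m$ of the supports $\supp Q_i^m$ (so $\sum_i \norm{s}_{L^p(\supp Q_i^m)}^p \le m \norm{s}_{L^p}^p$), we obtain
$$\sum_{i=0}^{n-1} |c_i|^p \le 2^{mp} m^{p/p'} n \cdot m \, \norm{s}_{L^p}^p = \bigl(2^m m \, n^{1/p}\bigr)^p \norm{s}_{L^p}^p,$$
because $p/p' + 1 = p$. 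Taking $p$-th roots yields the claim for $1 \le p < \infty$; the case $p = \infty$ follows directly from $|c_i| \le \norm{\psi_i^m}_{L^1}\norm{s}_{L^\infty} \le 2^m m \norm{s}_{L^\infty}$ (since $\norm{\psi_i^m}_{L^1} \le 2^m n \cdot m/n = 2^m m$), or by passing to the limit.

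The main obstacle, as always in this circle of ideas, is the construction of the dual functionals with the explicit bound $\norm{\psi_i^m}_{L^\infty} \le 2^m n$. This is, however, a classical and well-documented result in spline approximation theory (see, e.g., de Boor's monograph), so in the paper it suffices to cite it; the remainder of the argument is an elementary Hölder-plus-overlap computation.
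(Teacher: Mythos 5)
Your overall strategy is different from the paper's and is, in outline, a legitimate one: you prove the periodic statement directly from a local dual basis, whereas the paper takes the condition-number bound of Scherer and Shadrin (Theorem~1 of \citealp{SchSh99}) for B-splines on the \emph{real line} as the black box and transfers it to $\T$ by periodizing the coefficients over $l$ periods and letting $l\to\infty$. Your H\"older-plus-finite-overlap computation is correct: with $\norm{\psi_i^m}_{L^\infty}\le 2^m n$ and supports of measure $m/n$ covering each point exactly $m$ times, the exponents do combine to give exactly $m2^m n^{1/p}$, and the $p=\infty$ case follows as you say. The local nature of your argument also means periodicity causes no real difficulty (the dual functional lives on a single knot interval, where periodic and non-periodic splines restrict to the same polynomials, provided $n>m$), which is arguably cleaner than the paper's limiting argument.

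The genuine gap is the key quantitative input. The existence of dual functions with $\norm{\psi_i^m}_{L^\infty}\le 2^m n$ is \emph{not} a classical estimate available in de~Boor's monograph: with the base $2$ in the exponential, this bound is essentially equivalent to the proposition itself and is precisely the content of de~Boor's $2^k$-conjecture, which was open until Scherer and Shadrin proved it in 1999 --- the very reference the paper's proof invokes. The classical dual-functional constructions (de~Boor's divided-difference functionals, converted to integral form) yield constants with a much larger exponential base (of order $9^{k}$ rather than $2^k$), so citing ``de~Boor's monograph'' for the $2^m$ bound makes the argument circular at its crucial step. Two remarks to put this in perspective: (i) if you replace $2^m$ by \emph{some} constant $C(m)$, your proof is complete using only classical facts, and that weaker statement is all the paper ever uses (Proposition~\ref{prop:size_coef} enters Lemma~\ref{le:approx1} only through an unspecified constant depending on $k$); (ii) to obtain the constant $m2^m$ as literally stated, you must cite \citet{SchSh99}, at which point your dual-basis detour is no longer needed --- their Theorem~1 already gives the $\ell^p$--$L^p$ inequality on the line, and only the periodization step (which your argument sidesteps but the paper carries out explicitly) remains.
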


\begin{proof}
Let us first consider  $1 \le p < \infty$. By $\{\tilde{Q}^m_i\}_{i=-m+1}^{ln-1}$ we denote the normalized B-splines on the real line with equally spaced knots $\{{(-m+1)}/{n},  (-m+2)/n, \ldots, (ln+m-1)/n\}$. Let $\tilde{c}_i := c_{i \text{ mod } n}$ for $i = -m+1,\ldots, ln-1$. It is known from~\citep[Theorem 1]{SchSh99} that 
\[
\norm{(\tilde{c}_i)_{i=-m+1}^{ln-1}}_{p} \le m2^mn^{1/p}\norm{\sum_{i=-m+1}^{ln-1}\tilde{c}_i\tilde{Q}^m_i}_{L^p} \quad \text{ for any } l \in \N. 
\]
It implies
\begin{multline*}
l\norm{(c_i)_{i=0}^{n-1}}^p_p + \norm{(c_i)_{i = n-m+1}^{n-1}}_p^p
\le n(m2^m)^p \Biggl(l\norm{\sum_{i=0}^{n-1}c_i Q^m_i}^p_{L^p} 
+  \norm{\sum_{i = n-m+1}^{n-1}Q_i^m \mathbf{1}_{[0, \frac{m-1}{n})
      \cup [\frac{n-m+1}{n},1)}}_{L^p}^p\Biggr)
\end{multline*}
or
\begin{multline*}
\norm{(c_i)_{i=0}^{n-1}}^p_p + \frac{1}{l}\norm{(c_i)_{i = n-m+1}^{n-1}}_p^p \le n(m2^m)^p \Biggl(\norm{\sum_{i=0}^{n-1}c_i Q^m_i}^p_{L^p} 
 +  \frac{1}{l}\norm{\sum_{i = n-m+1}^{n-1}Q_i^m \mathbf{1}_{[0, \frac{m-1}{n}) \cup [\frac{n-m+1}{n},1)}}_{L^p}^p\Biggr).
\end{multline*}
By letting $l \to \infty$, we obtain~\eqref{eq:cond:bspline} for $1\le p < \infty$.  Then, the case $p=\infty$ follows by taking $p \to\infty$.
\end{proof}

\begin{remark}
This result is a generalization of a known result for splines on $\R$~\citep[see][]{SchSh99} to periodic splines. 
\end{remark}

\begin{proposition}[Boundedness of $L^2$-projector]\label{prop:Linfty_norm_L2_proj}
Let $P_{\mathcal{S}}$ be the orthogonal projector onto $\mathcal{S}_m(\Gamma_n;\T)$ in the topology of $L^2(\T)$. Then there is a constant $C$ depending only on $m$ such that 
\[
\norm{P_{\mathcal{S}}u}_{L^{p}} \le C\norm{u}_{L^{p}} \quad \text{for any } u \in L^{p}(\T) \text{ and } 1 \le p \le \infty.
\]
\end{proposition}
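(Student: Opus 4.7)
The plan is to reduce the problem, via self-adjointness of $P_{\mathcal{S}}$ and Riesz--Thorin interpolation, to the single endpoint $p = \infty$, and then to control $\norm{P_{\mathcal{S}} u}_{L^\infty}$ through the size of the B-spline coefficients of $P_{\mathcal{S}} u$.

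Since $P_{\mathcal{S}}$ is orthogonal in $L^2$, one has $\norm{P_{\mathcal{S}}}_{L^2 \to L^2} \le 1$ trivially, and its self-adjointness together with duality yields $\norm{P_{\mathcal{S}}}_{L^1 \to L^1} = \norm{P_{\mathcal{S}}}_{L^\infty \to L^\infty}$. Riesz--Thorin interpolation between these three bounds therefore reduces the proof to establishing
\[
\norm{P_{\mathcal{S}} u}_{L^\infty} \le C_m \norm{u}_{L^\infty},
\]
with a constant $C_m$ depending only on $m$, and it is the uniformity in $n$ that carries all the content.

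For this endpoint I would expand $P_{\mathcal{S}} u = \sum_{i=0}^{n-1} c_i Q_i^m$. The coefficient vector $c$ solves the normal equations $Gc = b$ with Gram matrix $G_{ij} = \langle Q_i^m, Q_j^m \rangle_{L^2}$ and right-hand side $b_j = \langle u, Q_j^m \rangle_{L^2}$. The partition of unity $\sum_i Q_i^m \equiv 1$ gives the upper bound $\norm{P_{\mathcal{S}} u}_{L^\infty} \le \norm{c}_{\ell^\infty}$, while $\int Q_j^m = 1/n$ gives $\abs{b_j} \le \norm{u}_{L^\infty}/n$. The entire task therefore boils down to the row-sum estimate
\[
\sum_{j=0}^{n-1} \bigl\lvert(G^{-1})_{ij}\bigr\rvert \le C_m\, n \qquad \text{for every } i \in \{0, \ldots, n-1\}.
\]

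The structural observation that makes this tractable is that, thanks to the uniform periodic grid, $nG$ is symmetric positive definite, banded with bandwidth $m-1$, and in fact circulant, with entries depending only on $m$. For such matrices the inverse is known to have entries that decay geometrically away from the diagonal, with both decay rate and pre-constant depending only on $m$. One can obtain this either from the classical theorem of Demko, Moss and Smith on inverses of banded symmetric positive definite matrices, or, more directly in the periodic setting, from a Fourier analysis of the circulant symbol of $nG$, which is a nonnegative trigonometric polynomial of degree $m-1$ bounded away from zero by a constant depending only on $m$. Either route yields $\abs{n(G^{-1})_{ij}} \le C_m \rho^{\lvert i-j\rvert}$ with $\rho = \rho(m) \in (0,1)$, and summing the geometric series gives the row-sum bound. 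The main obstacle is precisely this geometric decay of $(nG)^{-1}$ with constants uniform in $n$; once it is in hand everything else is routine. A more powerful alternative would be to invoke Shadrin's theorem on the uniform $L^\infty$-stability of the orthogonal spline projector for arbitrary partitions, but the Fourier-analytic argument in the present uniform periodic setting is self-contained and produces explicit dependence on $m$.
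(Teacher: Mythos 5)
Your proposal is correct, and its skeleton (reduce to an endpoint, expand $P_{\mathcal{S}}u$ in the B-spline basis, bound the coefficients through the inverse Gram matrix, finish with the partition of unity $\sum_i Q_i^m\equiv 1$) matches the paper's. There are two genuine differences. First, the reduction: you use self-adjointness of $P_{\mathcal{S}}$ plus duality to identify the $L^1\to L^1$ and $L^\infty\to L^\infty$ norms and then interpolate, so only $p=\infty$ needs to be proved; the paper instead runs two separate endpoint arguments, using the two dual normalizations $Q_i^m$ (with $\norm{Q_i^m}_{L^\infty}\le 1$, $\sum_i Q_i^m=1$) and $R_i^m:=nQ_i^m$ (with $\norm{R_i^m}_{L^1}=1$) for $p=\infty$ and $p=1$ respectively, and cites interpolation for intermediate $p$. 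Both are fine; yours is marginally slicker, the paper's avoids invoking self-adjointness on $L^1$. Second, and more substantively, the key bound $\norm{G^{-1}}_{\infty}\le C_m$ (in your normalization, $\sum_j\abs{(G^{-1})_{ij}}\le C_m n$): the paper simply cites de Boor's 2012 extension of Shadrin's theorem, which holds for \emph{arbitrary} knot sequences, whereas you give a self-contained argument exploiting that on the uniform periodic grid $nG$ is a circulant band matrix with entries depending only on $m$, whose symbol is the Euler--Frobenius-type trigonometric polynomial bounded below by a constant depending only on $m$ (the Riesz-basis property of cardinal B-splines); geometric decay of the Fourier coefficients of the reciprocal symbol (or Demko--Moss--Smith, with bandedness understood cyclically) then gives the row-sum bound uniformly in $n$. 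Your route buys a self-contained, elementary proof with explicit constants for the case actually needed here; the paper's citation buys generality — its remark explicitly notes that the same proof covers periodic splines with non-equally spaced knots, which your circulant argument does not.
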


\begin{proof}
By \citep[Theorem 2.11]{AF03} and \citep[][Theorem 6.30]{Schumaker2007}, 
it is sufficient to prove this assertion only for $p = 1$ and $p = \infty$. 

Consider first the case $p = \infty$.  Let $Q^m_i \in
\mathcal{S}_m(\Gamma_n;\T)$ be the normalized B-splines, and
$R_i^m:=nQ^m_i$ implying that $\norm{R^m_i}_{L^1} = 1$. If $P_{\mathcal{S}}f = \sum_{i=0}^{n-1} a_i Q_i^m$, then 
$$
\sum_{j=0}^{n-1} a_j \inner{Q_j^m}{R_i^m} = \inner{f}{R_i^m}, 
$$
that is, we have an equation of the form $G a = b$ with $a := (a_i)_{i=0}^{n-1}$, $b:=(\inner{f}{R_i^m})_{i=0}^{n-1}$ and $G:=(\inner{R_i^m}{Q_j^m})_{i,j}$. Note that 
$$
\norm{b}_{\infty} = \max_{i}\abs{\inner{f}{R_i^m}}\le \max_{i}\norm{f}_{L^{\infty}}\norm{R_i^m}_{L^1} = \norm{f}_{L^\infty}.
$$ 
This implies that
\[
\norm{P_{\mathcal{S}}f}_{L^{\infty}} = \norm[B]{\sum_{i=0}^{n-1}a_iQ_i^m}_{L^\infty} \le \norm{a}_{\infty} \le \norm{G^{-1}}_{\infty}\norm{b}_{\infty}\le \norm{G^{-1}}_{\infty}\norm{f}_{L^\infty}.
\]
It follows from~\citep{DeBoor12} that 
\[
\norm{G^{-1}}_{\infty} \le C_m
\]
for some constant $C_m$ depending only on $m$. Thus, $\norm{P_{\mathcal{S}}f}_{L^{\infty}} \le C_m \norm{f}_{L^\infty}$.

Next consider $p = 1$. Let $P_{\mathcal{S}}f =
\sum_{i=0}^{n-1}\tilde{a}_i R_i^m$, then $\sum_{j=0}^{n-1} \tilde{a}_j
\inner{R_j^m}{Q_i^m} = \inner{f}{Q_i^m}$, i.e., $G^t \tilde{a} =
\tilde{b}$, where $(\cdot)^t$ denotes transpose, $\tilde{a} :=
(\tilde{a}_i)_{i=0}^{n-1}$ and
$\tilde{b}:=(\inner{f}{Q_i^m})_{i=0}^{n-1}$. It follows from $\sum_{i}
Q_i^m = 1$ and $Q_i^m \ge 0$ that
$$
\norm{\tilde{b}}_1 = \sum_{i} \abs{\inner{f}{Q_i^m}} \le \sum_{i}\inner[b]{\abs{f}}{Q_i^m} = \inner[B]{\abs{f}}{\sum_{i}Q_i^m} = \norm{f}_{L^1}.
$$
Then
\begin{align*}
\norm{P_{\mathcal{S}}f}_{L^1} = & \norm[B]{\sum_{i=0}^{n-1} \tilde{a}_i R_i^m}_{L^1} \le \norm{\tilde{a}}_1 \le \norm{G^{-t}}_1\norm{\tilde{b}}_1 \\
= & \norm{G^{-1}}_{\infty}\norm{\tilde{b}}_{1}\le \norm{G^{-1}}_{\infty}\norm{f}_{L^1} \le C_m \norm{f}_{L^1}.
\end{align*}
That is, we obtain the assertion for $p=1$. 
\end{proof}

\begin{remark}
The above result (of periodic splines with equally spaced knots) is probably proven in 1970s, but we are not aware of the reference.  The proof we give here also shows the result for periodic splines with non-equally spaced knots, since \cite{DeBoor12} proved the boundedness of the inverse Gram matrix of B-splines for any knots. Similar results for non-periodic splines with arbitrary knots are originally proven in \citep{Shadrin2001}, and recently shortened in \citep{Golitschek2014}.
\end{remark}

\begin{proposition}[Approximation property]\label{prop:spline_approx}
Let $ 1 \le p, p',  q \le \infty$. 
There exists a linear operator $A:L^1_{0}(\T)\to\mathcal{S}_m(\Gamma_n;\T)$ such that for every $u\in L^1_{0}(\T)$
\[
\begin{aligned}
&\norm{u-Au}_{W^{r,q}_{0}} \le C_1\frac{\norm{u}_{B^{s,p'}_{p,0}}}{n^{s- r - (1/p-1/q)_+}} \quad \text{ with } 1\le s \le m,\, 0 \le r \le \lfloor s-1\rfloor,\\
&\norm{Au}_{W^{r,q}_{0}} \le C_2\frac{\norm{u}_{{B^{s,p'}_{p,0}}}}{n^{s-r-(1/p-1/q)_+}} \quad \text{ with } 1\le s \le \lceil s \rceil \le r \le m-1,
\end{aligned}
\]
where $C_1$, $C_2$ depend only on $m$, $p$. Moreover, both inequalities also hold for the Sobolev norm $\norm{\cdot}_{W^{s,p}_{0}}$ when $p = p'$ and $s \in \N$.
\end{proposition}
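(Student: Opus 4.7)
The plan is to take $A$ to be a quasi-interpolant onto $\mathcal{S}_m(\Gamma_n;\T)$ enjoying three properties: (i) locality, in the sense that $Au|_{I_j}$ depends only on $u|_{\tilde I_j}$ for an enlarged neighborhood $\tilde I_j$ of $I_j := [j/n,(j+1)/n)$ of diameter $\mathcal{O}(1/n)$; (ii) polynomial reproduction, $AP = P$ for every $P \in \mathcal{P}_m$; and (iii) uniform local $L^p$-boundedness, $\norm{Au}_{L^p(I_j)}\le C\norm{u}_{L^p(\tilde I_j)}$ for $1 \le p\le \infty$. Natural candidates are the $L^2$-projection $P_{\mathcal{S}}$ of Proposition~\ref{prop:Linfty_norm_L2_proj}, whose locality follows from the exponential decay of the dual basis of B-splines, or a Schoenberg- or de Boor--Fix-type quasi-interpolant, for which these three properties are classical.

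For the first (Jackson-type) inequality I would run the standard Whitney--Bramble--Hilbert argument: polynomial reproduction together with (iii) gives, for every $P \in \mathcal{P}_m$,
\[
\norm{u - Au}_{L^p(I_j)} \le \norm{u - P}_{L^p(I_j)} + \norm{A(u - P)}_{L^p(I_j)} \le C\inf_{P \in \mathcal{P}_m}\norm{u - P}_{L^p(\tilde I_j)},
\]
and Whitney's theorem bounds the right-hand side by the local modulus of smoothness $\varpi_m(u;1/n)_{L^p(\tilde I_j)}$. Summing over the $\mathcal{O}(n)$ intervals in $\ell^{p'}$ and invoking the definition of the Besov norm yields $\norm{u - Au}_{L^p} \le Cn^{-s}\norm{u}_{B^{s,p'}_{p,0}}$ for $0 < s \le m$. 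Replacing the outer $L^p$ by $L^q$ with $q \ge p$ costs a factor $n^{1/p-1/q}$ via Nikolskii's inequality on each $I_j$, which is the source of $(1/p-1/q)_+$ in the exponent. To incorporate $r \le \lfloor s-1\rfloor$ derivatives I would reapply the argument to $D^r u$, using the lifting $D^r : B^{s,p'}_{p,0} \to B^{s-r,p'}_{p,0}$ and the fact that $D^r Au$ lies in $\mathcal{S}_{m-r}(\Gamma_n;\T)$.

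The second (Bernstein-type) inequality rests on the classical inverse estimate for splines: for $v \in \mathcal{S}_m(\Gamma_n;\T)$ and $0 \le r \le m-1$,
\[
\norm{D^r v}_{L^q} \le Cn^{r+(1/p-1/q)_+}\norm{v}_{L^p},
\]
the extra factor again coming from Nikolskii on pieces of length $1/n$. Applied naively to $v = Au$ this only yields $n^{r+(1/p-1/q)_+}\norm{u}_{L^p}$, which is off by $n^{-s}$. The missing decay is recovered by exploiting polynomial reproduction: on each $\tilde I_j$ decompose $Au = A(u - P_j) + P_j$ for a best local polynomial approximant $P_j$ of degree $\le m - 1$, estimate $D^r P_j$ by Markov's inequality (with the $L^p$-size of $P_j$ controlled by $\norm{u}_{L^p(\tilde I_j)}$), and bound $A(u - P_j)$ by the Jackson step, which contributes the required $n^{-s}\norm{u}_{B^{s,p'}_{p,0}}$ after assembly.

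The main obstacle is precisely the bookkeeping in the second inequality: since $r \ge \lceil s \rceil$, the function $u$ need not possess classical derivatives of order $r$, so the $n^{-s}$ saving must be extracted entirely from local polynomial approximation while one simultaneously pays the Bernstein blow-up $n^r$. The Sobolev-norm version for $p = p'$ with integer $s$ is outside the Besov identification $W^{s,p}_{0} = B^{s,p}_{p,0}$ (which fails at integer order), but follows from the same scheme upon replacing Whitney's theorem by the direct estimate $\inf_{P \in \mathcal{P}_m}\norm{u - P}_{L^p(\tilde I_j)} \le C\abs{\tilde I_j}^{s}\norm{D^s u}_{L^p(\tilde I_j)}$, valid for integer $s \le m$.
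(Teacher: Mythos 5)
Your quasi-interpolant scheme is, in substance, the intended route: the paper itself gives no argument beyond citing \citep[Theorems 8.12, 6.31 and 6.30]{Schumaker2007}, and those theorems are proved exactly by the machinery you describe (a local, polynomial-reproducing, locally $L^p$-bounded quasi-interpolant, Whitney's theorem, and spline inverse estimates). Your treatment of the first (Jackson-type) inequality is essentially correct, modulo two technical points you should not wave away: Nikolskii's inequality applies to splines and polynomials, not to $u-Au$ itself, so the $(1/p-1/q)_+$ loss has to come from a local embedding of the Whitney error (or from $B^{s,p'}_{p}\hookrightarrow B^{s-(1/p-1/q),p'}_{q}$) rather than from Nikolskii applied to the residual; and ``reapplying the argument to $D^r u$'' presupposes a commutation identity $D^r(Au)=A_r(D^r u)$ with a lower-order quasi-interpolant $A_r$, which holds for the de Boor--Fix functionals on uniform knots but not for the $L^2$-projector, so the choice of $A$ matters here (the alternative is the direct local estimate of $D^r(u-Au)$ via Markov plus Whitney).

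The genuine gap is in the second (Bernstein-type) inequality. You decompose $Au = A(u-P_j)+P_j$ with $P_j$ a best local approximant of degree $\le m-1$ and bound $D^r P_j$ by Markov, with $\norm{P_j}_{L^p(\tilde I_j)}\lesssim\norm{u}_{L^p(\tilde I_j)}$. That yields $\norm{D^r P_j}_{L^p(I_j)}\lesssim n^{r}\norm{u}_{L^p(\tilde I_j)}$, hence globally $n^{r}\norm{u}_{L^p}$, which exceeds the target $n^{r-s+(1/p-1/q)_+}\norm{u}_{B^{s,p'}_{p,0}}$ by a factor of $n^{s}$; no amount of bookkeeping recovers the $n^{-s}$ decay from a Markov bound on a polynomial whose size is only controlled in $L^p$. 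The standard fix uses the hypothesis $\lceil s\rceil\le r$ in an essential way: choose $P_j\in\mathcal{P}_{\lceil s\rceil}$, i.e.\ of degree $\le\lceil s\rceil-1\le r-1$, so that $D^r P_j\equiv 0$ identically, while $AP_j=P_j$ still holds because $\lceil s\rceil\le m$. Then only $D^r A(u-P_j)$ survives, and the spline inverse estimate plus local boundedness give $\norm{D^r A(u-P_j)}_{L^p(I_j)}\lesssim n^{r}\inf_{P\in\mathcal{P}_{\lceil s\rceil}}\norm{u-P}_{L^p(\tilde I_j)}\lesssim n^{r}\varpi_{\lceil s\rceil}(u;1/n)_{L^p(\tilde I_j)}$, which after summation and the embedding $B^{s,p'}_{p}\hookrightarrow B^{s,\infty}_{p}$ produces the required $n^{r-s}\norm{u}_{B^{s,p'}_{p,0}}$. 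With that replacement (and the Nikolskii step applied to the spline $A(u-P_j)$, where it is legitimate) your argument closes.
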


\begin{remark}
In the case of Sobolev norm $\norm{\cdot}_{W^{s,p}_{0}}, \, s\in\N$,  the assertions follow from {\citep[][Theorem~8.12]{Schumaker2007}.} Following the idea of the proof of {\citep[][Theorem~6.31]{Schumaker2007},} such results can be extended to Besov norms using {\citep[][Theorem 6.30]{Schumaker2007}.}
\end{remark}

\begin{proposition}[Finite differences and $W^{1,p}(\T)$]\label{prop:bnd:fd}
Let $h > 0$ and $1\le p \le \infty$. Then
\begin{equation}\label{eq:fd}
\norm{D_{h,+}f}_{L^p} = \norm{D_{h,-}f}_{L^p} \le h \norm{D f}_{L^p} \quad \text{ for } f \in W^{1,p}(\T). 
\end{equation}
\end{proposition}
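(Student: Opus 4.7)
The plan is to prove the equality first, then the inequality, exploiting translation invariance of $L^p(\T)$ and Minkowski's integral inequality applied to the fundamental theorem of calculus.

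For the equality, I would observe that on the torus, $D_{h,-}f(x) = f(x) - f(x-h)$ and $D_{h,+}f(x-h) = f(x) - f(x-h)$. So $D_{h,-}f$ is the translate of $D_{h,+}f$ by $h$. Since the $L^p$-norm on $\T$ is translation-invariant, the two norms coincide.

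For the inequality, I would reduce to a representation of $D_{h,+}f$ as an integral of $Df$. For $f\in C^1(\T)$, the fundamental theorem of calculus yields
\[
D_{h,+}f(x) = f(x+h)-f(x) = \int_0^h (Df)(x+t)\,dt.
\]
For general $f\in W^{1,p}(\T)$, the same identity holds almost everywhere, since the weak derivative $Df$ is the a.e.\ derivative of an absolutely continuous representative of $f$ along lines. Applying Minkowski's integral inequality in $L^p$ on the torus then gives
\[
\norm{D_{h,+}f}_{L^p} = \norm[B]{\int_0^h (Df)(\cdot + t)\,dt}_{L^p} \le \int_0^h \norm{(Df)(\cdot+t)}_{L^p}\,dt = h\,\norm{Df}_{L^p},
\]
where the last equality uses translation invariance once more. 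The case $p=\infty$ is immediate from the pointwise estimate $\abs{D_{h,+}f(x)}\le h\norm{Df}_{L^\infty}$.

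There is no real obstacle: the only technicality is justifying the integral representation of $D_{h,+}f$ for weakly differentiable $f$, which is standard (either via the a.e.\ absolutely continuous representative along lines, or via approximation by smooth functions and passage to the limit in $L^p$ for $p<\infty$, combined with weak-$\ast$ density for $p=\infty$). Once this is in place, Minkowski's integral inequality finishes the proof in one line.
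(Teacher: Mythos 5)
Your proof is correct and follows essentially the same route as the paper: both reduce to the fundamental-theorem-of-calculus representation of $D_{h,+}f$ and then interchange the $L^p$-norm with the $t$-integral, the only cosmetic difference being that you invoke Minkowski's integral inequality where the paper derives the same bound inline via Jensen's inequality and Fubini after approximating by smooth functions. Your handling of the equality via translation invariance and of the $p=\infty$ case matches the paper's (which simply asserts both).
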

\begin{proof}
The case of $p = \infty$ is obviously true. 
Now consider $1 \le p <\infty$. Since $\norm{D_{h,+}f}_{L^p} =
\norm{D_{h,-}f}_{L^p}$, it is sufficient to prove~\eqref{eq:fd} only
for $D_{h,+}$.  Note that for each $f\in W^{1,p}(\T)$ there is a
sequence of smooth functions $f_n$, such that $\norm{D_{h,+}f_n}_{L^p}
\to \norm{D_{h,+}f}_{L^p}$ and $\norm{D f_n}_{L^p} \to \norm{D
  f}_{L^p}$ as $n \to \infty$.  Therefore, we assume without loss of
generality that $f$ is a
smooth function. It follows from the equation $f(x+h)-f(x) = h\int_{0}^1
f'(x+th)dt$ that
\begin{align*}
\int_{0}^1\abs{f(x+h)-f(x)}^p dx & \le h^p\int_0^1\Bigl(\int_0^1\abs{f'(x+th)}dt\Bigr)^p dx \\
& \le h^p\int_0^1\int_0^1\abs{f'(x+th)}^p dt\, dx \\
& = h^p \int_0^1\int_0^1 \abs{f'(x+th)}^p dx\, dt \\
& = h^p \norm{f'}_{L^p}^p. 
\end{align*}
That is $\norm{D_{h,+}f}_{L^p} \le h\norm{D f}_{L^p}$.
\end{proof}

\subsection{Regular Systems}

Next we state two technical lemmas,
which allow us to estimate the dual multiresolution norm of piecewise
constant vectors in case the system $\mathcal{B}$ is $m$-regular.
These piecewise constant vectors will appear as spline coefficients
for certain approximation splines needed for the proof of
Proposition~\ref{pr:distance_1d}.

\begin{lemma}\label{le:cover_help}
  Assume that $m \in \N$, $m\ge 2$, and that $n \in \N$
  is written as
  \[
  n = \sum_{j=0}^r d_j m^j
  \qquad\text{ with } d_j \in \{0,\ldots,m-1\}
  \]
  and $r = \lfloor \log_m n \rfloor$.
  Then
  \[
  \sum_{j=0}^r d_j m^{j/2} \le (\sqrt{m}+1) \sqrt{n}.
  \]
\end{lemma}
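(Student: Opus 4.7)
The plan is to prove the lemma by strong induction on $r$ (equivalently, on $n$). In the base case $r=0$ one has $n = d_0 \in \{1,\ldots,m-1\}$, and the desired inequality reduces to $\sqrt{d_0} \le \sqrt{m}+1$, which is immediate from $d_0 \le m-1 < (\sqrt{m}+1)^2$.

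For the inductive step with $r \ge 1$, I would split off the leading digit by writing $n = d_r m^r + n'$ with $n' := \sum_{j=0}^{r-1} d_j m^j < m^r$, and correspondingly
\[
\sum_{j=0}^r d_j m^{j/2} = d_r m^{r/2} + \sum_{j=0}^{r-1} d_j m^{j/2}.
\]
If $n' = 0$, the conclusion follows exactly as in the base case, with $d_r$ in place of $d_0$. Otherwise, since $n'$ has strictly fewer base-$m$ digits than $n$, the inductive hypothesis applies and yields $\sum_{j=0}^{r-1} d_j m^{j/2} \le (\sqrt{m}+1)\sqrt{n'}$. What remains to be shown is then
\[
d_r m^{r/2} \le (\sqrt{m}+1)\bigl(\sqrt{n} - \sqrt{n'}\bigr).
\]

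The final step is a routine conjugation: using $\sqrt{n} - \sqrt{n'} = (n-n')/(\sqrt{n}+\sqrt{n'}) = d_r m^r/(\sqrt{n}+\sqrt{n'})$ and cancelling $d_r > 0$, the above is equivalent to
\[
\sqrt{n} + \sqrt{n'} \le (\sqrt{m}+1)\, m^{r/2}.
\]
This in turn follows from the two elementary bounds $\sqrt{n} < \sqrt{m}\,m^{r/2}$ (since $n < m^{r+1}$) and $\sqrt{n'} < m^{r/2}$ (since $n' < m^r$), whose sum is exactly $(\sqrt{m}+1)\,m^{r/2}$. There is no real obstacle in the argument; the only thing to notice is that splitting off the top digit and writing $d_r m^r = n - n'$ leads, via the $\sqrt{a}-\sqrt{b}$ conjugation, precisely to the constant $\sqrt{m}+1$, making the induction close up cleanly.
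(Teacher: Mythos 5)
Your proof is correct, and it follows the same basic strategy as the paper's: induction on the number of base-$m$ digits, peeling off the leading term $d_r m^r$. The two arguments differ only in how the inductive step is closed. The paper squares the full sum, expands, and controls the cross term using $\bigl(\sum_{j\le r-1} d_j m^j\bigr)^{1/2} \le m^{r/2}$ together with the identity $(m-1)+2(\sqrt{m}+1)=(\sqrt{m}+1)^2$; you instead keep everything linear and reduce the step to $\sqrt{n}+\sqrt{n'}\le(\sqrt{m}+1)\,m^{r/2}$ via the conjugate identity $\sqrt{n}-\sqrt{n'}=(n-n')/(\sqrt{n}+\sqrt{n'})$. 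Both hinge on exactly the same two facts, $n<m^{r+1}$ and $n'<m^r$, so the mathematical content is identical; your version is arguably a bit cleaner since it avoids expanding a square and handling a cross term. Two small points you implicitly rely on and could state: cancelling $d_r$ is legitimate because $r=\lfloor\log_m n\rfloor$ forces $d_r\ge 1$, and the inductive hypothesis applies to $n'$ because the strong induction is on $r$ (the trailing digits $d_{r'}{+}1,\dots,d_{r-1}$ of $n'$ beyond its own leading exponent $r'$ vanish, so the two sums agree).
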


\begin{proof}
  We prove this claim by induction over $r$.
  For $r = 0$ it is trivial.

  Now assume that the claim holds for $r$ and let $n$ be such that
  $\lfloor \log_m n \rfloor = r+1$. Then  
  \[
  \begin{aligned}
    &\Bigl(\sum_{j=0}^{r+1} d_j m^{j/2}\Bigr)^2\\
    = &\Bigl(\sum_{j=0}^r d_j m^{j/2}\Bigr)^2 + d_{r+1}^2 m^{r+1} + 2d_{r+1} m^{(r+1)/2}\sum_{j=0}^r d_jm^{j/2}\\
    \le& (\sqrt{m}+1)^2\sum_{j=0}^r d_jm^j + d_{r+1}^2 m^{r+1} + 2d_{r+1} m^{(r+1)/2}(\sqrt{m}+1) \Bigl(\sum_{j=0}^r d_j m^j\Bigr)^{1/2}\\
    \le &(\sqrt{m}+1)^2\sum_{j=0}^r d_jm^j + d_{r+1}^2 m^{r+1} + 2d_{r+1} m^{(r+1)/2}(\sqrt{m}+1) m^{(r+1)/2}\\
    = &(\sqrt{m}+1)^2\sum_{j=0}^r d_j m^j + \left(d_{r+1}+2(\sqrt{m}+1)\right)d_{r+1} m^{r+1}.
  \end{aligned}
  \]
  Since $d_{r+1} \le m-1$ and $m-1 + 2(\sqrt{m}+1) = (\sqrt{m}+1)^2$, this proves the assertion.
\end{proof}

\begin{lemma}\label{le:cover}
  Assume that the family $\mathcal{B}$ is $m$-regular
  for some fixed $m \ge 2$.
  Let now $I = \{i_0,i_0+1,\ldots,i_0+p-1\}/n \subset \Gamma_n$
  and define $c \in \R^{\Gamma_n}$ by $c_i = 1$ if $i \in I$
  and $c_i = 0$ if $i \not \in I$. Then
  \[
  \norm{c}_{\mathcal{B}^*} \le (\sqrt{m} + 1)\sqrt{2mp}.
  \]
\end{lemma}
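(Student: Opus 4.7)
The plan is to use the dual-norm representation~\eqref{eq:dual_multiresolution_norm} and exhibit an explicit decomposition of $c$ as a sum of indicators of grid-intersections of $m$-partition intervals (which lie in $\mathcal{B}$ by $m$-regularity), then bound the resulting cost $\sum_B |c_B|\sqrt{n(B)}$. Concretely, I will partition $I$ into its \emph{maximal $m$-partition sub-blocks}: each grid point $i/n \in I$ is assigned to the largest $m$-partition interval $B$ containing $i/n$ with $B \cap \Gamma_n \subseteq I$ (such a $B$ always exists since at sufficiently fine scales $B \cap \Gamma_n = \{i/n\}$). These blocks form a disjoint cover of $I \cap \Gamma_n$, so the choice $c_B = +1$ on each of them gives a valid decomposition.

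Let $d_j$ denote the number of cover blocks coming from level-$j$ $m$-partition intervals. The key combinatorial claim is that $d_j \le 2(m-1)$ for every $j$. Because $I$ is a contiguous set of grid points, at most two of the level-$(j-1)$ $m$-partition intervals overlapping $I$ are partial (namely the leftmost and rightmost such intervals); every other level-$(j-1)$ interval overlapping $I$ lies entirely inside $I$, and is therefore itself a cover block at a coarser level, contributing to $d_{j-1}$ rather than to $d_j$. Each of the at most two partial parents contributes at most $m-1$ children to $d_j$, since if all $m$ children lay inside $I$ then the parent would too.

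It remains to translate $d_j \le 2(m-1)$ and $\sum_j d_j\,n(B^{(j)}) = p$ into the desired bound using Lemma~\ref{le:cover_help}. I split $d_j = d_j^{(1)} + d_j^{(2)}$ with both summands in $\{0,\ldots,m-1\}$, and set $p^{(i)} := \sum_j d_j^{(i)}\,n(B^{(j)})$, so that $p^{(1)} + p^{(2)} = p$. In the aligned case $n = m^R$, every level-$j$ interval has grid-cardinality exactly $m^{R-j}$, and via the substitution $\tilde j := R-j$ the quantity $\sum_j d_j^{(i)} \sqrt{n(B^{(j)})}$ takes exactly the form handled by Lemma~\ref{le:cover_help} applied to the base-$m$ expansion of $p^{(i)}$, yielding $\sum_j d_j^{(i)} \sqrt{n(B^{(j)})} \le (\sqrt{m}+1)\sqrt{p^{(i)}}$. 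Adding the two pieces and using Cauchy--Schwarz ($\sqrt{p^{(1)}} + \sqrt{p^{(2)}} \le \sqrt{2p}$) bounds the total cost by $(\sqrt{m}+1)\sqrt{2p} \le (\sqrt{m}+1)\sqrt{2mp}$.

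The main obstacle I anticipate is handling the non-aligned case $n \neq m^R$ cleanly: here the grid-cardinality of a level-$j$ interval fluctuates between $\lfloor nm^{-j}\rfloor$ and $\lceil nm^{-j}\rceil$, and at fine scales where $nm^{-j} < 1$ the cover contains singleton blocks of cardinality one rather than the nominal $nm^{-j}$. The extra factor $\sqrt{m}$ in the target bound $\sqrt{2mp}$ (as opposed to $\sqrt{2p}$ in the aligned case) is exactly what is needed to absorb this slack: for instance, using $n(B) \le nm^{-j}+1$ one can replace the actual cardinality by $2nm^{-j}$ at every level where $nm^{-j} \ge 1$ at a cost of at most a $\sqrt{2}$ factor, while singleton contributions at deeper levels are bounded by $\sqrt{m}$ times the nominal cost.
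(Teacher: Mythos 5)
Your strategy is sound and genuinely different from the paper's. You build the cover ``from the inside'': partition $I$ into maximal $m$-partition blocks whose grid-intersections lie in $I$, note that such a block must have a \emph{partial} parent (one meeting both $I$ and its complement in $\Gamma_n$), and use contiguity of $I$ to get at most two partial intervals per level, hence the per-level multiplicity bound $d_j\le 2(m-1)$, which feeds into Lemma~\ref{le:cover_help} after splitting each $d_j$ into two admissible digits. Your disjointness and validity checks for the decomposition in~\eqref{eq:dual_multiresolution_norm} are correct, and so is the multiplicity bound. The paper instead builds the cover ``from the endpoints'': it fixes the single finest relevant scale $r=\lceil\log_m n\rceil$, takes the smallest level-$r$-aligned interval $[\ell_-m^{-r},\ell_+m^{-r})$ containing $I$, and reads the cover off the base-$m$ expansions of $\ell_\pm$ digit by digit. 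The payoff of that bookkeeping is that all sizes are measured in level-$r$ cells throughout, so $n(B)\le m^k$ is exact for a level-$k$ interval and the total cell count is $\ell_+-\ell_-\le\frac{m^r}{n}(p-1)+2< mp$, after which one Cauchy--Schwarz yields exactly $(\sqrt m+1)\sqrt{2mp}$. Your grid-point accounting is cleaner when $n=m^R$ --- there you even obtain the sharper $(\sqrt m+1)\sqrt{2p}$, matching the paper's remark following the lemma --- but it is precisely what causes trouble in general.

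The non-aligned case is where your proof is not finished, and the sketch as written does not recover the stated constant. The cheap direction $n(B)\le\lceil nm^{-j}\rceil\le m^{R-j}$ (with $R=\lceil\log_m n\rceil$) is free, but converting the nominal total $\sum_j d_j m^{R-j}$ back into $p=\sum_B n(B)$ costs both the factor $m$ from $m^R<mn$ and a further factor $2$ from $\lfloor nm^{-j}\rfloor\ge nm^{-j}/2$, giving $\sum_j d_jm^{R-j}\le 2mp$ and hence a final bound $(\sqrt m+1)\sqrt{4mp}$ --- a factor $\sqrt 2$ worse than claimed. A geometric count does not rescue it: your maximal blocks may protrude into the gaps $((i_0-1)/n,i_0/n)$ and $((i_0+p-1)/n,(i_0+p)/n)$, so their union can contain up to $mp+m+1$ level-$R$ cells, whereas the paper's aligned hull adds at most one cell per side and stays below $mp$. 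Since Lemma~\ref{le:cover} is only invoked up to multiplicative constants in Lemma~\ref{le:approx1}, this discrepancy is harmless for the paper's results, but to prove the inequality as literally stated you would need the paper's cell-based accounting (or a tighter analysis of your cover). Two smaller points: record explicitly that cover blocks occur only at levels $j\le\lceil\log_m n\rceil$, because a partial parent must contain at least two grid points and hence have length exceeding $1/n$ (this disposes of the ``deeper levels'' you worry about); and note that when applying Lemma~\ref{le:cover_help} to $p^{(i)}$ you should discard leading zero digits so that the exponent of the leading nonzero digit is indeed $\lfloor\log_m p^{(i)}\rfloor$.
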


\begin{proof}
  Let $r = \lceil\log_m n\rceil$.
  Let $\ell_-\in\N$ be maximal such that $\ell_- m^{-r} \le i_0/n$,
  and let $\ell_+\in\N$ be minimal such that $\ell_+ m^{-r} > (i_0+p-1)/n$.
  Then
  \[
  \ell_+-\ell_- < \frac{m^r}{n}(p-1) + 2 < mp.
  \]
  Now write
  \[
  \ell_- = \sum_{j=0}^r d_j^- m^j
  \qquad\text{ and }\qquad
  \ell_+ = \sum_{j=0}^r d_j^+ m^j.
  \]
  Let moreover $0 \le s \le r-1$ be maximal such that $d_s^- < d_s^+$
  and denote by $\hat{\ell}$ the minimal number of the form
  \[
  \hat{\ell} = \hat{d}_s m^s + \sum_{j=s+1}^r d_j^+ m^j
  \]
  such that $\ell_- \le \hat{\ell}$.

  Next we denote by $\mathcal{B}_1$ the collection of intervals
  of the form
  \begin{multline*}
  [\ell m^{k-r},(\ell+1)m^{k-r})
  \quad\text{ where } 0 \le k \le s-1,
  \text{ and } 
  \ell = \sum_{j = k+1}^r d_j^+ m^j + dm^k
  \text{ with } 0\le d < d_k^+.
  \end{multline*}
  Similarly, we denote by $\mathcal{B}_2$ the collection
  of intervals of the form
  \[
  [\ell m^{s-r},(\ell+1)m^{s-r})
  \qquad\text{ where } \ell = \hat{\ell} + dm^s
  \text{ with } 0 \le d < d_s^+ - \hat{d}_s.
  \]
  Then the intervals contained in $\mathcal{B}_1 \cup\mathcal{B}_2$
  form a disjoint cover of $[\hat{\ell}m^{-r},\ell_+m^{-r})$.

  Next we write
  \[
  \hat{\ell}-\ell_- = \sum_{j=0}^{s-1}\hat{d}_j^- m^j
  \]
  and denote by $\mathcal{B}_3$ the collection of intervals of the form
  \begin{multline*}
  \hat{\ell}m^{-r}-(\ell m^{k-r},(\ell+1)m^{k-r}]
  \quad\text{ where } 0 \le k \le s-1,
  \text{ and } 
   \ell = \sum_{j = k+1}^{s-1} \hat{d}_j^- m^j + dm^k
  \text{ with } 0\le d < \hat{d}_k^-.
  \end{multline*}
  Then the intervals contained in $\mathcal{B}_3$ form
  a disjoint cover of $[\ell_-m^{-r},\hat{\ell}m^{-r})$.
  
  Note in addition that by construction all of these intervals 
  are also contained in $\mathcal{B}$.
  Now denote $\hat{\mathcal{B}} = \mathcal{B}_1\cup\mathcal{B}_2\cup\mathcal{B}_3$
  and define $c_B := 1$ if $B \in \hat{\mathcal{B}}$ and $B\cap I \neq \emptyset$
  and $c_B := 0$ if $B \in\mathcal{B}\setminus\hat{\mathcal{B}}$
  or $B\in\hat{\mathcal{B}}$ and $B\cap I = \emptyset$.
  Then $c_i = \sum_{B \ni i} c_B$ for all $i$ and therefore
  \[
  \norm{c}_{\mathcal{B}^*} 
  \le \sum_{B\in\mathcal{B}} \abs{c_B} \sqrt{n(B)}
  \le \sum_{B\in\hat{\mathcal{B}}} \sqrt{n(B)}.
  \]
  Now note that
  \[
  \sqrt{n([\ell m^{k-r},(\ell+1)m^{k-r}),n)} \le m^{k/2}
  \qquad\text{ for all } 0 \le k \le r.
  \]
  Therefore Lemma~\ref{le:cover_help} and the definition of $\hat{\mathcal{B}}$
  imply that
  \[
  \begin{aligned}
    \norm{c}_{\mathcal{B}^*}
    &\le \sum_{k=0}^{s-1} d_k^+ m^{k/2} + (d_s^+-\hat{d}_s)m^{s/2} + 
    \sum_{k=0}^{s-1} \hat{d}_k^-m^{k/2} \\
    &\le (\sqrt{m} +1) \biggl(\Bigl((d_s^+-\hat{d}_s)m^s + \sum_{k=0}^{s-1} d_k^+ m^k\Bigr)^{1/2}
    + \Bigl(\sum_{k=0}^{s-1} \hat{d}_k^- m^k\Bigr)^{1/2}\biggr)\\
    &= (\sqrt{m} +1) \Bigl(\sqrt{\ell_+-\hat{\ell}}+\sqrt{\hat{\ell}-\ell_-}\Bigr)\\
    &\le (\sqrt{m}+1)\sqrt{2(\ell_+-\ell_-)}\\
    &\le (\sqrt{m}+1)\sqrt{2mp}.
  \end{aligned}
  \]
\end{proof}

\begin{remark}
  Note that the estimate in the previous Lemma can be improved
  to $\norm{c}_{\mathcal{B}^*} \le (\sqrt{m}+1)\sqrt{2p}$ if $n$ is some
  power of $m$, because in this case, with the notation of the Lemma,
  we have $\ell^+-\ell^- = p$.
  Also we have the obvious estimate $\norm{c}_{\mathcal{B}^*} \le \sqrt{p}$
  in case the family $\mathcal{B}$ contains all intervals.
\end{remark}

\subsection{Main Part of the Proof}

In order to estimate the {multiscale distance function} $d_n$,
we need to approximate $D^k f$ by a function of the
form $D^k S_n^*\omega$, where $\omega\in\R^{\Gamma_n}$
is small with respect to the dual multiresolution norm.
We will perform this approximation in two steps: First,
we will show that a spline of order $k+1$ defined on
a coarser grid than $\Gamma_n$ can be approximated
well by a function of the form $D^k S_n^*\omega$ in such a way
that the dual multiresolution norm of $\omega$ increases sufficiently 
slowly with the decreasing grid size (see Lemma~\ref{le:approx1}).
In the second step, we then approximate $D^k f$ by a spline
$g$ of order $k+1$. Balancing the grid on which $g$ is
defined with $n$, then gives us the behavior of $d_n$
claimed in Proposition~\ref{pr:distance_1d}.

\begin{lemma}\label{le:approx1}
  Let $\Gamma\subset \T$ be a finite set and $g \in \mathcal{S}_{k+1}(\Gamma;\T)$
  with $\int_{\T} g\,dx = 0$.
  Assume moreover that
  \[
  \tau_{\min} := \min\set{\dist(x,y)}{x\neq y \in \Gamma} > \frac{2k+2}{n}
  \]
  and that $\mathcal{B}$ is regular,
  denote
  \[
  \tau_{\max} := \max\set{\dist(x,y)}{(x,y) \subset \T\setminus\Gamma}
  \]
  and let $1 \le q \le \infty$.
  Then there exists $c \in \R^{\Gamma_n}$ and constants $C_1$, $C_2 > 0$
  only depending on $k$, $q$, and $\mathcal{B}$ such that
  \[
  \begin{aligned}
  \norm{g - (S_n^*c)^{(k)}}_{L^2} &\le C_1 \frac{\norm{g^{(k)}}_{L^2}}{n^{k}}\\
  \norm{c}_{\mathcal{B}^*} &\le C_2 \norm{g^{(k)}}_{L^q}\abs{\Gamma}^{1-1/q} n^{1/q-1}(n\tau_{\max})^{(1/2-1/q)_+}.
  \end{aligned}
  \]
\end{lemma}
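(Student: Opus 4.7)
The plan is to approximate $g$ by a zero-mean spline of order $k$ on the fine grid $\Gamma_n$, convert that approximation into the form $(S_n^*c)^{(k)}$ using the identity displayed just before equation~\eqref{eq:def:psi} (with $m=k$), and then estimate both the $L^2$ approximation error and the dual multiresolution norm of the resulting $c$.

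First I would invoke Proposition~\ref{prop:spline_approx} with spline order $m=k$, smoothness $s=k$, $p=p'=q=2$ and $r=0$ to obtain a bounded quasi-interpolation operator $A\colon L^1_0(\T)\to \mathcal{S}_k(\Gamma_n;\T)$ satisfying $\norm{g-Ag}_{L^2}\le C\norm{g}_{H^k}/n^k\le C'\norm{g^{(k)}}_{L^2}/n^k$, the last step following from the iterated Poincar\'e inequality for the zero-mean $g$. After replacing $Ag$ by $\tilde g := Ag - \int_{\T} Ag\,dx$, which still lies in $\mathcal{S}_k(\Gamma_n;\T)\cap L^2_0(\T)$ and still satisfies the same $L^2$-bound (since $|\int Ag|\le\norm{Ag-g}_{L^1}\le\norm{Ag-g}_{L^2}$), expand $\tilde g=\sum_i \tilde c_i Q^k_i$ in the normalized B-spline basis; zero mean forces $\sum_i \tilde c_i=0$, so equivalently $\tilde g=\sum_i \tilde c_i \psi^k_{i,n}$. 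The identity before~\eqref{eq:def:psi} then gives $\tilde g=(S_n^*c)^{(k)}$ with $c_i := (-1)^k n^{k-1}(D_-^k\tilde c)_i$, which settles the first inequality.

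The second, more delicate, bound rests on the observation that on each coarse interval $(t_j,t_{j+1})$ the function $g$ coincides with a polynomial $p_j$ of degree $k$; setting $v_j := g^{(k)}|_{(t_j,t_{j+1})}=k!\cdot\mathrm{lead}(p_j)$, one has $\norm{g^{(k)}}_{L^q}^q=\sum_j |v_j|^q\tau_j$ with $\tau_j=t_{j+1}-t_j$. If $A$ is chosen so that $\tilde c_i$ depends on $g$ only through a localized functional reproducing polynomials of degree $k-1$ (e.g.\ a de Boor--Fix functional), then on grid indices $i$ with $i/n$ at distance $>(k+1)/n$ from both knots of the coarse interval---possible thanks to the hypothesis $n\tau_{\min}>2k+2$---the sequence $\tilde c_i$ is a polynomial in $i$ of degree exactly $k$ with leading coefficient $v_j/(k!\,n^k)$. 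Hence $(D_-^k\tilde c)_i=v_j/n^k$, so $c_i=(-1)^k v_j/n$ on each such ``interior block''; within $k+1$ indices of each knot the entry is merely $O(\norm{g^{(k)}}_{L^\infty((t_{j-1},t_{j+1}))}/n)$ on only $O(k|\Gamma|)$ indices, yielding a boundary correction of strictly lower order than the main term.

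Applying Lemma~\ref{le:cover} to the interior block of each coarse interval (of length at most $n\tau_j+1$ in $\Gamma_n$) bounds its contribution to $\norm{c}_{\mathcal{B}^*}$ by $C|v_j|\sqrt{\tau_j/n}$, so $\norm{c}_{\mathcal{B}^*}\le Cn^{-1/2}\sum_j|v_j|\sqrt{\tau_j}$, which is then estimated by H\"older in two cases: for $q\ge 2$, writing $|v_j|\tau_j^{1/2}=(|v_j|\tau_j^{1/q})\tau_j^{1/2-1/q}$ and using $\tau_j\le\tau_{\max}$ together with H\"older on an index set of size $|\Gamma|$ gives $\sum_j|v_j|\tau_j^{1/2}\le\norm{g^{(k)}}_{L^q}|\Gamma|^{1-1/q}\tau_{\max}^{1/2-1/q}$, while for $1\le q<2$ the now-negative exponent $1/2-1/q$ is absorbed by $\tau_j\ge\tau_{\min}>(2k+2)/n$, producing the extra factor $n^{1/q-1/2}$. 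Combining the two cases yields the advertised $(n\tau_{\max})^{(1/2-1/q)_+}$ factor. The principal obstacle in making this rigorous is the ``interior-block'' claim: one must choose the quasi-interpolant carefully so that its B-spline coefficients exhibit the claimed local polynomial-of-degree-$k$ structure away from the knots of $\Gamma$, and verify that the boundary corrections near each knot really contribute only a lower-order term compatible with the final bound.
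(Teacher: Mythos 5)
Your architecture matches the paper's: approximate $g$ by a zero-mean spline $h=\sum_i\tilde c_iQ_i^k\in\mathcal{S}_k(\Gamma_n;\T)$, pass to $c=(-1)^kn^{k-1}D_-^k\tilde c$ so that $h=(S_n^*c)^{(k)}$, show that $c$ is constant on the blocks of grid points lying away from the knots of $\Gamma$, and then combine Lemma~\ref{le:cover} with H\"older's inequality (your two-case H\"older computation, including the absorption of the negative exponent via $\tau_{\min}>(2k+2)/n$ when $q<2$, does reproduce the stated factor $(n\tau_{\max})^{(1/2-1/q)_+}$). The gap is exactly where you flagged it: the interior-block claim. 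Proposition~\ref{prop:spline_approx} only asserts the existence of \emph{some} linear operator $A$ with the stated approximation bounds; it gives you no control over the algebraic structure of the coefficients $\lambda_i(g)$, so you would have to construct a specific de~Boor--Fix quasi-interpolant, re-verify the approximation estimates for it, and then prove that $\lambda_i(p)$ is a polynomial of degree $k$ in $i$ with the asserted leading coefficient when $p\in\mathcal{P}_{k+1}$ and the functional's support sits inside one coarse interval. None of this is carried out, and it is the heart of the second inequality.

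The paper avoids this entirely by taking $h$ to be the $L^2$-orthogonal projection of $g$ onto $\mathcal{S}_k(\Gamma_n;\T)$ (which coincides with the best approximation in $\Span\{\psi^k_{i,n}\}$ because $\int g=0$). Orthogonality gives $\inner{h}{Q_j^k}_{L^2}=\inner{g}{Q_j^k}_{L^2}$ for all $j$, hence after summation by parts $\inner{D^{k+1}_{1/n,-}h}{Q_j^k}_{L^2}=\inner{D^{k+1}_{1/n,-}g}{Q_j^k}_{L^2}$; since $g$ is a polynomial of degree $k$ between consecutive knots of $\Gamma$, the right-hand side vanishes whenever $\supp Q_j^k$ avoids a $\bigl(-k/n,(k+1)/n\bigr)$-neighbourhood of $\Gamma$, and together with $D^{k+1}_{1/n,-}h=(-1)^kn^{1-k}\sum_i(D_-c)_iQ_i^k$ this forces $(D_-c)_j=0$ there --- i.e.\ the piecewise constancy you need, obtained with no structural assumption on the approximation operator. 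The paper then also bypasses your explicit formula $c_i=(-1)^kv_j/n$: it bounds $\norm{c}_{\mathcal{B}^*}\le\norm{c}_q\norm{t}_{q_*}$ by H\"older on the fine grid (which handles the near-knot indices in the same stroke, rather than as a separate "boundary correction"), and controls $\norm{c}_q\le Cn^{-1+1/q}\norm{g^{(k)}}_{L^q}$ by chaining the B-spline condition number (Proposition~\ref{prop:size_coef}), the $L^q$-boundedness of the $L^2$-projector (Proposition~\ref{prop:Linfty_norm_L2_proj}), and the finite-difference estimate (Proposition~\ref{prop:bnd:fd}). If you want to salvage your route, you must either prove the de~Boor--Fix coefficient structure from scratch or switch to the projection argument; as written, the central step of the second estimate is asserted rather than proved.
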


\begin{proof}
  Let $h$ be the best approximation of $g$
  in $\Span\{\psi_{i,n}^k : i\in\Gamma_n\}$ in the $L^2$ sense, see~\eqref{eq:def:psi}. 
  Then we can write
  \[
  h = \sum_{i=0}^{n-1} \tilde{c}_i \psi_{i,n}^k
  \]
  for some coefficients $\tilde{c}_i \in \R$.
  Because the functions $\psi_{i,n}^k$ are not linearly independent,
  the coefficients $\tilde{c}_i$ are not unique. It is, however,
  possible to choose them in such a way that
  \[
  \sum_{i=0}^{n-1} \tilde{c}_i = 0.
  \]
  Then
  \begin{equation}\label{eq:approx:hcoef}
    h = \sum_{i=0}^{n-1} \tilde{c}_i\psi_{i,n}^k = \sum_{i=0}^{n-1} \tilde{c}_i Q_i^k.
  \end{equation}
  Now note that the fact that $\int_{\T} g\,dx = 0$ implies that
  $h$ is at the same time the best approximation of $g$ in $\mathcal{S}_k(\Gamma_n;\T)$.
  Thus~\eqref{eq:approx:hcoef} shows that, actually, the coefficients
  $\tilde{c}_i$ are the coefficients of the $k$-th order spline that approximates
  $g$ best in the $L^2$-sense.
  Thus it follows from Proposition~\ref{prop:spline_approx} that 
  \begin{equation}\label{eq:approx:on:fine:grid}
    \norm{h-g}_{L^2} \le C_1 \frac{\norm{g^{(k)}}_{L^2}}{n^{k}}.
  \end{equation}

  Now let 
  \[
  c_i:=(-1)^k n^{k-1} (D^k_{-}\tilde{c})_i, \quad \text{ for } i=0,\ldots,n-1,
  \]
  which implies that
  \[
  h = \sum_{i=0}^{n-1} c_i \varphi_{i,n}^{(k)} = (S_n^* c)^{(k)}.
  \]
  We will next derive an upper bound for $\norm{c}_{\mathcal{B}^*}$. 

  Since $h$ is the best approximation of $g$ within $\mathcal{S}_k(\Gamma_n;\T)$,
  it follows that
  \[
  \inner{h}{Q_j^k}_{L^2} = \inner{g}{Q_j^k}_{L^2}
  \]
  for all $j$. Applying $r$-th order finite differences
  to these vectors,
  we obtain that
  \[
  D_-^r\bigl((\inner{h}{Q_j^k}_{L^2})_j\bigr) = D_-^r\bigl((\inner{g}{Q_j^k}_{L^2})_j\bigr)
  \]
  for all $r$.
  From this, we obtain that
  \[
  \inner{h}{D_{\frac{1}{n},+}^r Q_j^k}_{L^2} = \inner{g}{D_{\frac{1}{n},+}^r Q_j^k}_{L^2}
  \]
  for all $j$. Since $(D_{\frac{1}{n},+}^r)^* = (-1)^r D_{\frac{1}{n},-}^r$,
  this further implies that
  \begin{equation}\label{eq:bestapp}
    \inner{D_{\frac{1}{n},-}^r h}{Q_j^k}_{L^2} = \inner{D_{\frac{1}{n},-}^r g}{Q_j^k}_{L^2}
  \end{equation}
  for all $j$ and all $r$. Next we note that
  \begin{equation}\label{eq:hrepresent}
  D_{\frac{1}{n},-}^{k+1} h = \sum_{i=0}^{n-1} (D_-^{k+1}\tilde{c})_i Q_i^k = (-1)^k n^{1-k} \sum_{i=0}^{n-1} (D_- c)_i Q_i^k.
  \end{equation}
  Now let $j/n \in \Gamma_n$ be such that $j/n \not\in \Gamma+(-k/n, (k+1)/n)$
  and let $x \in \supp(Q_j^k) = [j/n, (j+k)/n]$. Then the fact that $g$ is a polynomial
  of degree $k$ outside of $\Gamma$ implies that
  \[
  (D_{\frac{1}{n},-}^{k+1}g)(x) = 0.
  \]
  As a consequence, we obtain from~\eqref{eq:bestapp} with $r=k+1$
  and~\eqref{eq:hrepresent} that
  \[
  0 = \inner{D_{\frac{1}{n},-}^{k+1}g}{Q_j^k}_{L^2}
  = (-1)^k n^{1-k} \sum_{i=0}^{n-1} (D_- c)_i \inner{Q_i^k}{Q_j^k}.
  \]
  Since this holds for every $j\in \Gamma_n$ with $j/n \not\in \Gamma+(-k/n, (k+1)/n)$,
  it follows from the properties of B-splines that
  \[
  (D_- c)_j = 0
  \]
  for all $j$ such that $j/n \not\in \Gamma+(-k/n, (k+1)/n)$.

  Now denote by $I \subset \Gamma_n$ the set of all points $i/n$ for which
  $i/n \not\in \Gamma+(-k/n, (k+1)/n)$. Then the set $I$ consists
  of $\abs{\Gamma}$ disjoint sets $I_j\subset \T$, $j=1,\ldots,\abs{\Gamma}$,
  of subsequent grid points. The considerations above imply that
  for each of these sets $I_j$ there exists $\omega_j\in\R$ such that
  $c_i = \omega_j$ for $i/n \in I_j$.
  Therefore Lemma~\ref{le:cover} implies that
  \begin{equation}\label{eq:approx:cBstar}
    \norm{c}_{\mathcal{B}^*} \le \sum_{j=1}^{\abs{\Gamma}} C\abs{\omega_j}\sqrt{n(I_j,n)} + 
    \sum_{i\not\in I} \abs{c_i}
  \end{equation}
  for some constant $C > 0$ only depending on $\mathcal{B}$.
  Now define
  \[
  t_i := \begin{cases}
    1 & \text{ if } i/n \not\in I,\\
    C\dfrac{1}{\sqrt{n(I_j,n)}} & \text{ if } i/n \in I_j \text{ for some } j.
  \end{cases}
  \]
  Then the right hand side term in~\eqref{eq:approx:cBstar} can also
  be written as a sum over all products $t_i\abs{c_i}$, $i=0,\ldots,n-1$.
  Therefore
  \[
  \norm{c}_{\mathcal{B}^*} \le \sum_{i=0}^{n-1} t_i \abs{c_i}.
  \]
  Applying H\"older's inequality gives
  \begin{multline*}
    \norm{c}_{\mathcal{B}^*} \le \norm{c}_q \norm{t}_{q_*}
    = \norm{c}_q \Bigl(\abs{\Gamma_n\setminus I} 
    + C^{q_*}\sum_{j=1}^{\abs{\Gamma}} n(I_j,n)^{1-q_*/2}\Bigr)^{1/q_*}\\
    \le \norm{c}_q \Bigl(2k\abs{\Gamma}+C^{q_*} \sum_{j=1}^{\abs{\Gamma}} n(I_j,n)^{1-q_*/2}\Bigr)^{1/q_*}
  \end{multline*}
  for any $1 \le q \le \infty$ and $q_* = q/(q-1)$. 
  Since $1 \le n(I_j,n) \le n\tau_{\max}$ for all $j$, this further implies that
  \begin{multline}\label{eq:cstar}
    \norm{c}_{\mathcal{B}^*} \le \norm{c}_q\bigl(2k\abs{\Gamma} + C^{q_*}\abs{\Gamma}(n\tau_{\max})^{(1-q_*/2)_+}\bigr)^{1/q_*}\\
    \le C\norm{c}_q \abs{\Gamma}^{1/q_*}(n\tau_{\max})^{(1/q_*-1/2)_+}
    = C\norm{c}_q \abs{\Gamma}^{1-1/q}(n\tau_{\max})^{(1/2-1/q)_+}.
  \end{multline}

  Now note that~\eqref{eq:bestapp} implies that $D_{1/n,-}^k h$ is
  the best approximating spline in the $L^2$-sense of
  $D_{1/n,-}^k g$. Thus the definition of $c$ and Propositions~\ref{prop:size_coef},
  \ref{prop:Linfty_norm_L2_proj} and~\ref{prop:bnd:fd} imply that
  \begin{multline*}
  \norm{c}_q =  n^{k-1} \norm{D_-^k \tilde{c}}_q 
  \le C n^{k-1+1/q} \norm{D_{1/n,-}^k h}_{L^q} 
  \le  C n^{k-1+1/q} \norm{D_{1/n,-}^k g}_{L^q}
  \le C n^{-1+1/q} \norm{g^{(k)}}_{L^q}.
  \end{multline*}
  Together with~\eqref{eq:cstar} this shows that
  \[
  \norm{c}_{\mathcal{B}^*} \le C \norm{g^{(k)}}_{L^q}\abs{\Gamma}^{1-1/q} n^{1/q-1}(n\tau_{\max})^{(1/2-1/q)_+}
  \]
  for some constant $C > 0$.
\end{proof}

\begin{proof}[of Proposition~\ref{pr:distance_1d}]
  Assume first that $p\ge 2$.
  Proposition~\ref{prop:spline_approx} applied with
  $u = f^{(k)} \in B^{s,p'}_{p,0}(\T)$, $m=k+1$, and $q=p$
  implies for every $\lambda \in\N$ the existence of
  a spline $g \in \mathcal{S}_{k+1}(\Gamma_{\lambda};\T)$ such that
  \[
  \begin{aligned}
    \norm{f^{(k)}-g}_{L^2} &\le C \frac{\norm{f}_{B_{p,0}^{k+s,p'}}}{{\lambda}^s},\\
    \norm{g^{(k)}}_{L^p} &\le C \frac{\norm{f}_{B_{p,0}^{k+s,p'}}}{{\lambda}^{s-k}}.
  \end{aligned}
  \]
  Next we obtain from Lemma~\ref{le:approx1} the existence of a vector $c \in \R^{\Gamma_n}$
  such that
  \[
  \begin{aligned}
    \norm{g-(S_n^* c)^{(k)}}_{L^2} &\le C \frac{\norm{g^{(k)}}_{L^2}}{n^k},\\
    \norm{c}_{\mathcal{B}^*} &\le C \norm{g^{(k)}}_{L^p} {\lambda}^{1/2} n^{-1/2},
  \end{aligned}
  \]
  provided that ${\lambda}$ is sufficiently large (here we use that, in the notation of the Lemma,
  $\abs{\Gamma} = {\lambda}$ and $\tau_{\max} = 1/{\lambda}$).
  Combining these estimates, it follows that, for
  \[
  t \ge C\norm{f}_{B_{p,0}^{k+s,p'}} n^{-1/2} {\lambda}^{1/2-s+k}
  \]
  we have
  \[
  d_n(t) \le C\norm{f}_{B_{p,0}^{k+s,p'}} {\lambda}^{-s}(1+{\lambda}^{k}n^{-k}).
  \]
  Choosing
  \[
  {\lambda} \sim n^{1/(2s+2k+1)}(\log n)^{-2r/(2k+2s+1)},
  \]
  we obtain that
  \[
  \min_{t\ge 0} \bigl(d_n(t) + (\log n)^{r/2}t^{1/2}\bigr)
  = \mathcal{O}\bigl(n^{-\mu}(\log n)^{2r\mu}\bigr)
  \qquad\text{ with }\qquad \mu = \frac{s}{2s+2k+1}
  \]
  as $n\to\infty$.

  Now let $p \le 2$. Again applying Proposition~\ref{prop:spline_approx}
  with $u = f^{(k)} \in B^{s,p'}_{p,0}(\T)$ and $m=k+1$,
  but now with $q=2$ yields $g \in \mathcal{S}_{k+1}(\Gamma_{\lambda};\T)$ such that
  \[
  \begin{aligned}
    \norm{f^{(k)}-g}_{L^2} &\le C \frac{\norm{f}_{B_{p,0}^{k+s,p'}}}{{\lambda}^{s-1/p+1/2}},\\
    \norm{g^{(k)}}_{L^2} &\le C \frac{\norm{f}_{B_{p,0}^{k+s,p'}}}{{\lambda}^{s-k-1/p+1/2}},
  \end{aligned}
  \]
  and we obtain, for ${\lambda}$ sufficiently large,
  from Lemma~\ref{le:approx1} the existence of $c\in\R^{\Gamma_n}$
  with
  \[
  \begin{aligned}
    \norm{g-(S_n^* c)^{(k)}}_{L^2} &\le C \frac{\norm{g^{(k)}}_{L^2}}{n^k},\\
    \norm{c}_{\mathcal{B}^*} &\le C \norm{g^{(k)}}_{L^2} {\lambda}^{1/2} n^{-1/2}.
  \end{aligned}
  \]
  This shows that, for
  \[
  t \ge C\norm{f}_{B_{p,0}^{k+s,p'}} n^{-1/2} {\lambda}^{1/p-s+k},
  \]
  we have
  \[
  d_n(t) \le C\norm{f}_{B_{p,0}^{k+s,p'}} {\lambda}^{1/p-1/2-s}(1+{\lambda}^{k}n^{-k}).
  \]
  Choosing
  \[
  {\lambda} \sim n^{1/(2k+2s+2-2/p)}(\log n)^{-2r/(2k+2k+s-2/p)},
  \]
  we obtain that
  \[
  \min_{t\ge 0} \bigl(d_n(t)+(\log n)^{r/2}t^{1/2}\bigr) =\mathcal{O}\bigl(n^{-\mu}(\log n)^{2r\mu}\bigr)
  \qquad\text{ with }\qquad
  \mu = \frac{s-1/p+1/2}{2(s+k+1-1/p)},
  \]
  which proves the assertion.
  
 {The above argument holds also for $f \in W^{k+s, p}_{0}(\T)$ if we replace $\norm{\cdot}_{B^{k+s, p'}_{p,0}}$ by $\norm{\cdot}_{W^{k+s, p}_{0}}$. }
\end{proof}

\section{Proof of Theorem~\ref{th:oversmoothing}}\label{sec:over:smooth}

\begin{lemma}\label{lem:continuousMRnorm}
Let $\mathcal{B}$ be a family of intervals, $f \in \mathcal{C}^1(\T)$, and $F(t) := \int_{0}^t f(x)dx$. Then,
\[
\frac{\norm{S_n f}_{\mathcal{B}}}{\sqrt{n}} \le \norm{F}_{W^{1/2,\infty}}+\frac{\norm{Df}_{L^{\infty}}}{2n}.
\] 
\end{lemma}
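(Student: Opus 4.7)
The plan is to reduce the multiresolution norm of $S_n f$ to a Riemann-sum estimate that brings in the antiderivative $F$. Fix any interval $B\in\mathcal{B}$ and enumerate $\Gamma_n\cap B =\{a_1<a_2<\cdots<a_m\}$ with $m=n(B)$ and $a_{i+1}-a_i=1/n$. Then I will compare the discrete sum $\sum_{i=1}^m f(a_i)$ with $n$ times the integral $\int_{a_1}^{a_m+1/n} f(x)\,dx = n\bigl(F(a_m+\tfrac1n)-F(a_1)\bigr)$, which is the natural ``continuous analog''. On each subinterval $[a_i,a_i+1/n)$, the identity
\[
\tfrac1n f(a_i)-\int_{a_i}^{a_i+1/n}f(x)\,dx=-\int_{a_i}^{a_i+1/n}(f(x)-f(a_i))\,dx
\]
together with the mean value theorem yields an error bounded by $\|Df\|_{L^\infty}/(2n^2)$. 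Summing over the $m$ pieces gives
\[
\Bigl|\sum_{i=1}^m f(a_i)-n\bigl(F(a_m+\tfrac1n)-F(a_1)\bigr)\Bigr|\le \frac{n(B)\,\|Df\|_{L^\infty}}{2n}.
\]

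Next, the Hölder-type estimate $|F(x)-F(y)|\le \|F\|_{W^{1/2,\infty}}|x-y|^{1/2}$ (which holds since, by the paper's conventions, $W^{1/2,\infty}(\T)$ coincides with the Hölder class of exponent $1/2$) applied to $x=a_m+1/n$, $y=a_1$ with $|x-y|=n(B)/n$, gives
\[
n\bigl|F(a_m+\tfrac1n)-F(a_1)\bigr|\le n\,\|F\|_{W^{1/2,\infty}}\sqrt{n(B)/n}=\sqrt{n\cdot n(B)}\,\|F\|_{W^{1/2,\infty}}.
\]
Combining the two displays and dividing by $\sqrt{n\cdot n(B)}$ produces
\[
\frac{1}{\sqrt{n\cdot n(B)}}\Bigl|\sum_{i=1}^m f(a_i)\Bigr|\le \|F\|_{W^{1/2,\infty}}+\frac{\sqrt{n(B)}\,\|Df\|_{L^\infty}}{2n\sqrt{n}}.
\]

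Since $n(B)\le n$, the last fraction is bounded by $\|Df\|_{L^\infty}/(2n)$, uniformly in $B$. Taking the supremum over $B\in\mathcal{B}$ and dividing the multiresolution norm definition by $\sqrt{n}$ yields the claim. The only mildly delicate point is the identification of $\|F\|_{W^{1/2,\infty}}$ with (an upper bound on) the Hölder-$1/2$ seminorm of $F$, which follows directly from the Besov-space definition in Section~\ref{subsec:basics} with $r=1$, $s=1/2$, $p=p'=\infty$; no further approximation theory is needed.
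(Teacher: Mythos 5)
Your proof is correct and follows essentially the same route as the paper's: split the normalized local sum into the integral term $n\bigl(F(a_m+\tfrac1n)-F(a_1)\bigr)$, which is controlled by the H\"older-$1/2$ (i.e.\ $W^{1/2,\infty}$) seminorm of $F$ over an interval of length $n(B)/n$, plus a Riemann-sum error of at most $n(B)\norm{Df}_{L^\infty}/(2n)$ per block, then use $n(B)\le n$. The only difference is cosmetic: you make explicit the identification of $\norm{F}_{W^{1/2,\infty}}$ with a bound on $\sup_{s\ne t}\abs{F(t)-F(s)}/\sqrt{\abs{t-s}}$, which the paper uses without comment.
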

\begin{proof}
Denote in the following
\[
\mathcal{B}_n := \set[B]{[i/n,j/n]}{\text{ there exists }
B \in \mathcal{B} \text{ such that } B\cap \Gamma_n = \{i/n,\ldots,j/n\}}.
\]
Then
\begin{align*}
\frac{\norm{S_n f}_{\mathcal{B}}}{\sqrt{n}} = &\max_{[i/n, j/n] \in \mathcal{B}_n} \frac{\sqrt{n}}{\sqrt{j-i+1}}\Bigl|{\frac{1}{n}\sum_{k = i}^j f\bigl(\frac{k}{n}\bigr)}\Bigr|\\
\le& \max_{[i/n,j/n] \in \mathcal{B}_n} \biggl\{ \frac{\sqrt{n}}{\sqrt{j-i+1}} \Bigl|{\int_{i/n}^{(j+1)/n}f(x)dx}\Bigr|  \\
& {} \qquad \qquad \quad + \frac{\sqrt{n}}{\sqrt{j-i+1}} \sum_{k = i}^j \Bigl|\frac{1}{n}f\bigl(\frac{k}{n}\bigr) - \int_{k/n}^{(k+1)/n}f(x)dx\Bigr| \biggr\} \\
\le &\sup_{s\neq t \in \T}\frac{\abs{F(t)-F(s)}}{\sqrt{t-s}}  \\
& {} \qquad \qquad \quad + \max_{[i/n, j/n] \in \mathcal{B}_n} \frac{\sqrt{n}}{\sqrt{j-i+1}}\sum_{k = i}^j \int_{k/n}^{(k+1)/n}\Bigl|f\bigl(\frac{k}{n}\bigr)-f(x)\Bigr|dx \\ 
\le &\norm{F}_{W^{1/2, \infty}} + \max_{[i/n, j/n] \in \mathcal{B}_n}\frac{\sqrt{n}}{\sqrt{j-i+1}}\frac{j-i+1}{2n^2}\norm{Df}_{L^\infty} \\
\le &\norm{F}_{W^{1/2,\infty}}+\frac{\norm{Df}_{L^{\infty}}}{2n}.
\end{align*}
\end{proof}

\begin{proof}[of Theorem~\ref{th:oversmoothing}] 
Let $\tilde{\epsilon} > 0$ be fixed and set 
$$
{\lambda}:=\left\lfloor\left(\frac{n}{\log n}\right)^{\frac{1}{2s+1}}(\log n)^{\tilde{\epsilon}}\right\rfloor.
$$ 
Let {$G_{\lambda}(t) \in \mathcal{S}_{k+2}(\Gamma_{\lambda};\T)$} be the approximation spline of $F(t):=\int_{0}^t f(x)dx$ as in Proposition~\ref{prop:spline_approx}, and $g_{\lambda}(t):= G_{\lambda}'(t).$ It follows that $g_{\lambda} \in H^k_0(\T)$, and 
\begin{align*}
\norm{f-g_{\lambda}}_{L^q}& = \norm{F' - G_{\lambda}'}_{L^q} \le C \frac{\norm{F}_{W^{s+1,\infty}}}{{\lambda}^s} \\
& \le C\left(\frac{\log n}{n}\right)^{\frac{s}{2s+1}}(\log n)^{-s\tilde{\epsilon}}\norm{f}_{W^{s,\infty}}, \\
\norm{D^kg_{\lambda}}_{L^2}& = \norm{D^{k+1}G_{\lambda}}_{L^2} \le C {\lambda}^{k-s}\norm{F}_{W^{s+1,\infty}} \\
& \le   C\left(\frac{n}{\log n}\right)^{\frac{k-s}{2s+1}}(\log n)^{(k-s)\tilde{\epsilon}}\norm{f}_{W^{s,\infty}}. 
\end{align*}
The second relation implies that
\begin{multline}\label{eq:est_dN_tilde}
\tilde{d}_n(t):=\min_{\norm{w}_{\mathcal{B}^*} \le t} \norm{D^k S_n^*w - 2D^k g_{\lambda}}_{L^2} \le \tilde{d}_n(0) = 2\norm{D^kg_{\lambda}}_{L^2} \\
 \le 2C\left(\frac{n}{\log n}\right)^{\frac{k-s}{2s+1}}(\log n)^{(k-s)\tilde{\epsilon}}\norm{f}_{W^{s,\infty}}
\end{multline}
for every $t \ge 0$.
By Proposition~\ref{prop:spline_approx} and Lemma~\ref{lem:continuousMRnorm}, we have
\begin{align*}
\norm{S_n f - S_n g_{\lambda}}_{\mathcal{B}} & \le \sqrt{n}\norm{F-G_{\lambda}}_{W^{1/2, \infty}}+o(1) \\
& \le C \sqrt{n}\frac{\norm{f}_{W^{s,\infty}}}{{\lambda}^{s+1/2}} \le C {(\log n)^{\frac{1}{2}-(s+\frac{1}{2})\tilde{\epsilon}}}\norm{f}_{W^{s,\infty}}.
\end{align*}
for sufficiently large $n$. Consequently 
\begin{equation}\label{eq:estMRnorm}
\begin{aligned}
\norm{S_n g_{\lambda} - y_n}_{\mathcal{B}} & \le \norm{S_n g_{\lambda} - S_n f}_{\mathcal{B}} + \norm{S_n f - y_n}_{\mathcal{B}} \\
& \le C {(\log n)^{\frac{1}{2}-(s+\frac{1}{2})\tilde{\epsilon}}}\norm{f}_{W^{s,\infty}} + \norm{\xi_n}_{\mathcal{B}}.
\end{aligned}
\end{equation}
Set $\tilde{\gamma}_n := C_0 \sqrt{\log n} < \gamma_n$ with some $C_0 > \sigma\sqrt{5+2k/d}$. Then $\norm{\xi_n}_{\mathcal{B}} \le \tilde{\gamma}_n$, together with~\eqref{eq:estMRnorm}, implies that  $\norm{S_ng_{\lambda} - y_n}_{\mathcal{B}} \le \gamma_n$ for large enough $n$. In such case we can apply Theorem~\ref{th:constrainedrate}, but with $f$ replaced by its approximation $g_{\lambda}$, and obtain the estimate  
\begin{multline*}
\norm{\hat{f}_{\gamma_n}-g_{\lambda}}_{L^q} \le C\max\biggl\{\frac{\gamma_n^{2\vartheta}}{n^\vartheta}\min_{t \ge 0} \left(\tilde{d}_n(t)+(\gamma_n t)^{1/2}\right)^{1-2\vartheta}, 
 \frac{\gamma_n}{n^{1/2}}, 
\frac{1}{n^{\vartheta'}}\min_{t \ge 0} \left(\tilde{d}_n(t) + (\gamma_n t)^{1/2}\right) \biggr\}, 
\end{multline*}
with $\vartheta = k/(2k+1)$, $\vartheta' = 2k^2/(2k+1)$. By~\eqref{eq:est_dN_tilde}, this further implies that,
for sufficiently large $n$, the estimate 
\begin{align*}
&\norm{\hat{f}_{\gamma_n}-g_{\lambda}}_{L^q} \\
\le & C \max\left\{ \frac{(\log n)^{\frac{s}{2s+1}+\epsilon}}{n^{\frac{s}{2s+1}}} \norm{f}_{W^{s,\infty}}^{\frac{1}{2k+1}},  \frac{(\log n)^r}{n^{1/2}},  \frac{(\log n)^{(k-s)\tilde{\epsilon}-\frac{k-s}{2s+1}}}{n^{\frac{s}{2s+1}+\vartheta\frac{4ks-1}{2s+1}}}\norm{f}_{W^{s,\infty}}\right\} \\
\le & C{(\log n)^{\frac{s}{2s+1}+\epsilon}}{n^{-\frac{s}{2s+1}}} \norm{f}_{W^{s,\infty}}^{\frac{1}{2k+1}},
\end{align*}
with $\epsilon = \frac{(k-s)\tilde{\epsilon}+(2r-1)k}{2k+1} >
\frac{(2r-1)k}{2k+1}$. Note that $\lim_{n \to \infty} \Prob{\norm{\xi_n}_{\mathcal{B}} > \tilde{\gamma}_n} = 0$ by Proposition~\ref{prop:mulitresolution_probability}. 
Thus we obtain that
\begin{align*}
\norm{\hat{f}_{\gamma_n}-f}_{L^q} 
& \le\norm{\hat{f}_{\gamma_n}-g_{\lambda}}_{L^q} + \norm{g_{\lambda}-f}_{L^q} \\
& \le  C{(\log n)^{\frac{s}{2s+1}+\epsilon}}{n^{-\frac{s}{2s+1}}} \norm{f}_{W^{s,\infty}}^{\frac{1}{2k+1}} + C {(\log n)^{\frac{s}{2s+1}-s\tilde{\epsilon}}}{n^{-\frac{s}{2s+1}}} \norm{f}_{W^{s,\infty}} \\
& \le C {(\log n)^{\frac{s}{2s+1}+{\epsilon}}}{n^{-\frac{s}{2s+1}}}\norm{f}_{W^{s,\infty}}^{\frac{1}{2k+1}}
\end{align*}
almost surely as $n \to \infty$. 

If $p(t) := \Prob{\norm{\xi_n}_{\mathcal{B}} \le t}$, it follows that for $n$ sufficiently large,
\begin{align*}
&\E{\norm{\hat{f}_{\gamma_n}-f}_{L^q}} \\
\le &\Prob{\norm{\xi_n}_{\mathcal{B}} \le \tilde{\gamma}_n} C  {(\log n)^{\frac{s}{2s+1}+{\epsilon}}}{n^{-\frac{s}{2s+1}}}\norm{f}_{W^{s,\infty}}^{\frac{1}{2k+1}} \\& \qquad\qquad\qquad
+\int_{\tilde{\gamma}_n}^{\infty}\sup\left\{\norm{\hat{f}_{\gamma_n}-f}_{L^\infty}:\norm{\xi_n}_{\mathcal{B}} = t\right\}dp(t)\\
\le & C {(\log n)^{\frac{s}{2s+1}+{\epsilon}}}{n^{-\frac{s}{2s+1}}}\norm{f}_{W^{s,\infty}}^{\frac{1}{2k+1}} \\& \qquad\qquad\qquad
+ \int_{\tilde{\gamma}_n}^{\infty}\sup\left\{\norm{\hat{f}_{\gamma_n}-f}_{L^\infty}:\norm{\xi_n}_{\mathcal{B}} = t\right\}dp(t).
\end{align*}
As in the proof of Theorem~\ref{th:rate_expect}, we see that
\[
\int_{\tilde{\gamma}_n}^{\infty}\sup\left\{\norm{\hat{f}_{\gamma_n}-f}_{L^\infty}:\norm{\xi_n}_{\mathcal{B}} = t\right\}dp(t) \le C n^{\frac{2k+5d}{2d}} \tilde{\gamma}_n \exp\left(-\frac{\tilde{\gamma}_n^2}{2\sigma^2}\right) \to 0, 
\] 
as  $n \to \infty. $ 
\newline
It is easy to see that the above argument also holds for $f \in B^{s, p'}_{\infty,0}(\T)$ with $1\le p' \le \infty$. This completes the proof. 
\end{proof}

\bibliographystyle{apalike}

\end{document}